\theoremstyle{plain} 
\newtheorem{thm}{Theorem}[section]
\newtheorem{prop}[thm]{Proposition}
\newtheorem{cor}[thm]{Corollary}
\newtheorem{lemma}[thm]{Lemma}
\theoremstyle{definition} 
\newtheorem{defn}[thm]{Definition}
\newtheorem{exmp}[thm]{Example}
\newtheorem{rmk}[thm]{Remark}
\newtheorem{notation}[thm]{Notation}
\newcommand{\pdim}{\textnormal{pd}}
\newcommand{\reg}{\operatorname{reg}}
\newcommand{\init}{\text{in}_>}
\newcommand{\F}{\mathbb{F}}
\newcommand{\Hilb}{\text{Hilb}}
\thanks{L.\ Ballard was partially supported by the National Science Foundation (DMS-1003384).}
\begin{document}

\title{Properties of the Toric Rings of a Chordal Bipartite Family of Graphs}
    \author[L.\ Ballard]{Laura Ballard}
  \address{Mathematics Department, Syracuse University, Syracuse, NY 13244, U.S.A.}
  \email{lballard@syr.edu} 

  \keywords{Koszul algebra, toric ring, graph, chordal bipartite, Castelnuovo-Mumford regularity, multiplicity}
  
  \subjclass[2020]{05E40, 13C15, 13D02, 13F65, 13H15, 16S37}

\begin{abstract}
This work concerns the study of properties of a group of Koszul algebras coming from the toric ideals of a chordal bipartite infinite family of graphs (alternately, these rings may be interpreted as coming from determinants of certain ladder-like structures). We determine a linear system of parameters for each ring and explicitly determine the Hilbert series for the resulting Artinian reduction.  As corollaries, we obtain the multiplicity and regularity of the original rings. This work extends results easily derived from lattice theory for a subfamily coming from a two-sided ladder to a family where, as we show, lattice theory no longer applies in any obvious way and includes constructive proofs which may be useful in future study of these rings and others. 
\end{abstract}

\maketitle

\section{Introduction}

In recent decades, there has been a growing interest in the investigation of algebraic invariants associated to combinatorial structures.  Toric ideals of graphs (and the associated edge rings), a special case of the classical notion of a toric ideal, have been studied by various authors with regard to invariants such as depth, dimension, projective dimension, regularity, graded Betti numbers, Hilbert series, and multiplicity, usually for particular families of graphs (see for example \cite{KOH,BIERMANN2017,Ferrers,DALI20153862,FKVT20,Galet,GITLER2005107,GREIF20201,HH20,HHKO11,HK14,HMO16,HMT19,mori2020edge,RNRN,TATAKIS20111540}).  We note in Remarks~\ref{graphtaurmk} and~\ref{distinct} that the family we consider does not overlap at all or for large $n$ with those considered in  \cite{Ferrers}, \cite{FKVT20}, \cite{Galet}, and \cite{RNRN}; it is more obviously distinct from other families that have been studied.  We think it fitting to mention that the recent book by Herzog, Hibi, and Ohsugi (\cite{herzog-hibi-ohsugi}) also investigates toric ideals of graphs as well as binomial ideals coming from other combinatorial structures.  

In this work, we consider a family of graphs with iterated subfamilies and develop algebraic properties of the toric rings associated to the family which depend only on the number of vertices (equivalently, the number of edges) in the associated graphs.  In the development of this project, we were particularly inspired by the work of Jennifer Biermann, Augustine O'Keefe, and Adam Van Tuyl in \cite{BIERMANN2017}, where they establish a lower bound for the regularity of the toric ideal of any finite simple graph and an upper bound for the regularity of the toric ideal of a chordal bipartite graph.  Our goal is to construct as ``simple" a family of graphs as possible that still yields interesting toric ideals.  It is our hope that our process and results will lead to further generalizations of properties of toric ideals for other (perhaps broader) families of graphs, or for graphs containing or arising from such graphs.

Herein, we introduce the infinite family $\mathcal{F}$ of chordal bipartite graphs $G_n^t$, where $n$ determines the number of edges and vertices and $t$ determines the structure of the graph, and establish some algebraic properties of the toric rings $R(n,t)$ associated to the graphs $G_n^t$.  The use of bipartite graphs makes each $R(n,t)$ normal and Cohen-Macaulay by \cite{SVV94} and \cite{herzog-hibi-ohsugi}; we use the latter in Section~\ref{resultsprops}.  Our main results prove to be independent of $t$ and depend only on $n$.

In Section \ref{family}, we construct the family $\mathcal{F}$ of graphs $G_n^t$ from a family of ladder-like structures $L_n^t$ so that the toric ideals of the $G_n^t$ are generalized determinantal ideals of the $L_n^t$.  The ladder-like structures associated to a subfamily $\mathcal{F}_1\subset\mathcal{F}$, introduced in Example \ref{mexmp2}, are in fact two-sided ladders (for large $n$), so that the family of rings $R(n,t)$ is a generalization of the family of ladder determinantal rings coming from $\mathcal{F}_1$.  While the rings arising from $\mathcal{F}_1$ come from a distributive lattice and have easily derived properties (see for example \cite{herzog-hibi-ohsugi}), we show that the rings associated to $\mathcal{F}$ do not naturally arise from any lattice in general, and merit closer study.

In Section \ref{resultsprops}, we establish some algebraic properties of the $R(n,t)$, particularly Krull dimension, projective dimension, multiplicity, and regularity.  To do so, we prove that the determinantal generators of the defining ideal $I_{G_n^t}$ are a Gr\"obner basis (it follows immediately from  \cite{herzog-hibi-ohsugi} that $R(n,t)$ is Koszul) and work with the initial ideal $\init I_{G_n^t}$.  We also develop a system of parameters $\overline{X_n}$ that allows us to work with Artinian reductions in part of our treatment, and their Hilbert series.  

Our first result establishes the Krull dimension of the toric ring $R(n,t)=S(n)/I_{G_n^t}$, where the ring  $S(n)=k[x_0,x_2,x_3,\ldots,x_{2n+3},x_{2n+4}]$ is the polynomial ring over the edges of $G_n^t$ and $I_{G_n^t}$ is the toric ideal of $G_n^t$.

\begin{thm}[Theorem \ref{dimension}]

The dimension of $R(n,t)$ is 
\[
\dim R(n,t)= n+3.
\]
\end{thm}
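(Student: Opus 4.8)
The plan is to identify $R(n,t)$ with the edge ring of $G_n^t$ and then apply the classical formula for the Krull dimension of such a ring. By construction, $R(n,t)=S(n)/I_{G_n^t}$ is isomorphic to the subalgebra $k[G_n^t]\subseteq k[V(G_n^t)]$ generated by the monomials $x_ux_v$ as $\{u,v\}$ runs over the edges of $G_n^t$; its Krull dimension equals the $\mathbb{Q}$-rank of the vertex--edge incidence matrix of $G_n^t$, and for a connected graph this is $|V(G_n^t)|-1$ when the graph is bipartite and $|V(G_n^t)|$ otherwise (see, e.g., \cite{herzog-hibi-ohsugi} or the standard references on edge rings).

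So the argument has two short steps. First, I would note that $G_n^t$ is connected and bipartite: bipartiteness is immediate since $G_n^t$ is chordal bipartite by the construction in Section~\ref{family}, and connectedness is read off directly from the explicit description of the ladder-like structure $L_n^t$ and the graph $G_n^t$ given there. Hence $\dim R(n,t)=|V(G_n^t)|-1$. Second, I would count vertices directly from the construction: $G_n^t$ has $2n+4$ edges (matching the $2n+4$ variables of $S(n)$) and $n+4$ vertices, so $\dim R(n,t)=(n+4)-1=n+3$. Equivalently, since $I_{G_n^t}$ is prime, it would suffice to check that its height in the $(2n+4)$-variable ring $S(n)$ is $n+1$, which is exactly the first Betti number $|E(G_n^t)|-|V(G_n^t)|+1$ of the graph; this is the same count in different language.

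I do not expect a genuine obstacle here: the real content is careful bookkeeping against the construction of Section~\ref{family} to confirm connectedness and the precise vertex count. If one prefers to stay inside the ring-theoretic framework used later in the paper, one can instead argue through the initial ideal: once the determinantal generators of $I_{G_n^t}$ are known to form a Gr\"obner basis, $\dim R(n,t)=\dim S(n)/\init I_{G_n^t}$, and $\init I_{G_n^t}$ is a squarefree monomial ideal generated in degree $2$ --- the edge ideal of an auxiliary graph on the $2n+4$ variables --- whose Krull dimension is the size of a maximum independent set of that graph. In that route the mild difficulty is describing the combinatorics of $\init I_{G_n^t}$ explicitly enough to exhibit an independent set of size $n+3$ and to prove its maximality; this is not needed for the first approach, and in any case the Gr\"obner basis computation is carried out in Section~\ref{resultsprops} for other purposes.
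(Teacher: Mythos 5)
Your argument is correct, but it takes a genuinely different route from the paper. You invoke the classical fact that for a connected graph the edge ring has Krull dimension equal to the rank of the vertex--edge incidence matrix, which is $|V|-1$ in the bipartite case; since $G_n^t$ is connected and bipartite by construction (Definition~\ref{graphtaudef}, Proposition~\ref{chordal}) and has $|V_r|+|V_c|=(\lfloor n/2\rfloor+2)+(\lceil n/2\rceil+2)=n+4$ vertices, you get $\dim R(n,t)=n+3$ at once; your bookkeeping of $2n+4$ edges and the equivalent height count $|E|-|V|+1=n+1$ is also consistent with the paper's Corollary~\ref{pdimension}. The paper instead argues through the initial ideal: using the Gr\"obner basis of Lemma~\ref{Grobner} it computes $\init I_{G_n^t}$, exhibits a chain of $n+4$ primes containing it for the lower bound, and for the upper bound kills the explicit linear forms $X_n$ and checks the resulting quotient is zero-dimensional. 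Your first approach is shorter and needs no Gr\"obner computation, but the paper's constructive route is not wasted effort: the same sequence $X_n$ is then promoted to a system of parameters for $R(n,t)$ itself (Proposition~\ref{sopedgering}), hence a regular sequence by Cohen--Macaulayness, and this Artinian reduction is the engine behind the Hilbert series, regularity, and multiplicity results later in the paper. Your sketched alternative via the initial ideal (maximum independent set of the auxiliary graph) is closer in spirit to the paper's proof, though the paper avoids the independent-set analysis by the prime-chain and Artinian-reduction argument.
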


\noindent As a corollary, since $R(n,t)$ comes from a bipartite graph and is hence Cohen-Macaulay (Corollary~\ref{CM}), we obtain the projective dimension of $R(n,t)$.

\begin{cor}[Corollary \ref{pdimension}]

The projective dimension of $R(n,t)$ over $S(n)$ is 
\[
\pdim_{S(n)} R(n,t)= n+1.
\]

\end{cor}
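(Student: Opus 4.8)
The plan is to deduce this immediately from the Auslander--Buchsbaum formula together with the Cohen--Macaulayness of $R(n,t)$ and the Krull dimension computed in Theorem~\ref{dimension}. Since $R(n,t)$ is a finitely generated graded module over the polynomial ring $S(n)$, Auslander--Buchsbaum gives
\[
\pdim_{S(n)} R(n,t) + \operatorname{depth} R(n,t) = \operatorname{depth} S(n),
\]
so it suffices to evaluate the two depths on the right-hand side.

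First I would note that $S(n) = k[x_0, x_2, x_3, \ldots, x_{2n+3}, x_{2n+4}]$ is a polynomial ring in $2n+4$ variables: the indices range over $0$ and $2, 3, \ldots, 2n+4$, so the subscript $1$ is skipped and there are $1 + (2n+3) = 2n+4$ generators. Hence $\operatorname{depth} S(n) = \dim S(n) = 2n+4$.

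Next I would invoke Corollary~\ref{CM}, which asserts that $R(n,t)$ is Cohen--Macaulay (this comes from the bipartiteness of $G_n^t$ via \cite{SVV94} and \cite{herzog-hibi-ohsugi}); therefore $\operatorname{depth} R(n,t) = \dim R(n,t)$, and Theorem~\ref{dimension} identifies the latter with $n+3$. Substituting both values into the displayed equation yields $\pdim_{S(n)} R(n,t) = (2n+4)-(n+3) = n+1$, as claimed. I do not anticipate any genuine obstacle here: the argument is a direct application of standard homological algebra once Theorem~\ref{dimension} and Corollary~\ref{CM} are in hand, and the only point that calls for a moment's care is the variable count for $S(n)$, which is $2n+4$ rather than $2n+5$ because the index $1$ does not appear.
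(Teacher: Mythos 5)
Your proposal is correct and follows exactly the paper's argument: the paper also deduces the result from the graded Auslander--Buchsbaum formula, the Cohen--Macaulayness of $R(n,t)$ from Corollary~\ref{CM}, the dimension count $\dim R(n,t)=n+3$ from Theorem~\ref{dimension}, and the fact that $S(n)$ has Krull dimension $2n+4$. Your explicit check of the variable count (index $1$ omitted) is a fine, if minor, addition.
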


We then develop a linear system of parameters for $R(n,t)$, using differences of elements on antidiagonals of the ladder-like structure $L_n^t$.

\begin{prop}[Proposition \ref{sopedgering}]

Let $R(n,t)=S(n)/I_{G_n^t}$.  Then the image of
\begin{equation*}
    X_n=x_{0},x_{2}-x_{3},x_{4}-x_{5},\ldots,x_{2n}-x_{2n+1},x_{2n+2}-x_{2n+3},x_{2n+4}
\end{equation*}
in $R(n,t)$ is a system of parameters for $R(n,t)$.  
\end{prop}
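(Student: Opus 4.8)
The plan is to invoke the standard system-of-parameters criterion in the graded setting. By Theorem~\ref{dimension} we have $\dim R(n,t)=n+3$, and the list $X_n$ consists of exactly $n+3$ homogeneous elements of $R(n,t)$; hence the image of $X_n$ is a system of parameters precisely when $\dim R(n,t)/(X_n)R(n,t)=0$, equivalently when $\sqrt{I_{G_n^t}+(X_n)}$ equals the irrelevant maximal ideal $\mathfrak m=(x_0,x_2,\dots,x_{2n+4})$ of $S(n)$. So it suffices to show that every variable is nilpotent modulo $I_{G_n^t}+(X_n)$. Note that the Gr\"obner basis of $I_{G_n^t}$ does not by itself settle this: the leading term of each difference $x_{2i}-x_{2i+1}$ is a single variable, so adjoining the leading terms of $X_n$ to the squarefree monomial ideal $\init I_{G_n^t}$ can never produce an $\mathfrak m$-primary ideal, and one must work with $I_{G_n^t}+(X_n)$ directly.

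First I would compute the quotient $\overline S:=S(n)/(X_n)$. The generators of $(X_n)$ impose $x_0=0$, $x_{2n+4}=0$, and $x_{2i}=x_{2i+1}$ for $1\le i\le n+1$, so $\overline S$ is the polynomial ring $k[y_1,\dots,y_{n+1}]$, where $y_i$ denotes the common image of $x_{2i}$ and $x_{2i+1}$. Writing $J\subseteq k[y_1,\dots,y_{n+1}]$ for the image of $I_{G_n^t}$, we get $R(n,t)/(X_n)R(n,t)\cong k[y_1,\dots,y_{n+1}]/J$, and the proposition reduces to showing $\sqrt J=(y_1,\dots,y_{n+1})$, i.e.\ that each $y_i$ is nilpotent modulo $J$.

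The heart of the argument is to track the determinantal generators of $I_{G_n^t}$ from Section~\ref{family} under this specialization. A contiguous $2\times 2$ minor $x_ax_d-x_bx_c$ of $L_n^t$ has its two ``middle'' entries $b,c$ on one antidiagonal of $L_n^t$ and its two ``corner'' entries $a,d$ on the adjacent antidiagonals; since the antidiagonals of $L_n^t$ carry the pairs $\{x_{2i},x_{2i+1}\}$ for $1\le i\le n+1$, with $\{x_0\}$ and $\{x_{2n+4}\}$ at the two ends, such a minor specializes — independently of $t$ — to $y_{i-1}y_{i+1}-y_i^2$ when $b,c$ sit on antidiagonal $i$, under the convention $y_0=y_{n+2}=0$; in particular the corner minors give $y_1^2\in J$ and $y_{n+1}^2\in J$. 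Since $L_n^t$ is a connected width-two strip, such a minor exists for every $i$ with $1\le i\le n+1$, so $J$ contains $y_1^2$, $y_{n+1}^2$, and $y_i^2-y_{i-1}y_{i+1}$ for $2\le i\le n$. From $y_1^2\in J$ and these Hankel-type relations a short induction gives $y_i^{2^i}\in J$ for $1\le i\le n$, and $y_{n+1}^2\in J$ directly; hence $\sqrt J=(y_1,\dots,y_{n+1})$, so $\dim R(n,t)/(X_n)R(n,t)=0$ and the image of $X_n$ is a system of parameters.

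The only genuine obstacle is the middle step: verifying, uniformly in $t$, that the generators written down in Section~\ref{family} really do specialize to $y_i^2=y_{i-1}y_{i+1}$ (and to $y_1^2$, $y_{n+1}^2$ at the corners), together with the claim that a minor straddling each consecutive triple of antidiagonals is always present. This is exactly where the antidiagonal structure of $L_n^t$ matters, and it also explains why the conclusion is insensitive to $t$: the ladders $L_n^t$ differ only in how the width-two strip bends, and a bend merely interchanges the two entries on an antidiagonal, which does not affect the specialized relation. Once this is checked, the remaining steps are routine manipulation with the identifications defining $\overline S$ and the nilpotence induction above.
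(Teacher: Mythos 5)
Your overall route is the same as the paper's: note that $X_n$ consists of $n+3=\dim R(n,t)$ homogeneous elements (Theorem~\ref{dimension}), identify $R(n,t)/(X_n)$ with a quotient of a polynomial ring in the $n+1$ identified variables, and show the specialized ideal has the maximal ideal as its radical by an induction that makes each variable nilpotent. The gap is precisely in the step you flag as the heart of the argument: the claim that, independently of $t$, every distinguished minor specializes to a Hankel relation $y_i^2-y_{i-1}y_{i+1}$ (plus $y_1^2,y_{n+1}^2$ at the ends), justified by the assertion that the pairs $\{x_{2i},x_{2i+1}\}$ lie on antidiagonals of $L_n^t$ and that a bend ``merely interchanges the two entries on an antidiagonal.'' This is false for general $t$. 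Already in $L_2^{(0,0,0)}$ the entry $x_7$ lies on the same antidiagonal as $x_4$ and $x_5$, and the distinguished minors need not come from contiguous $2\times 2$ blocks (e.g.\ $s_4=x_5x_7-x_0x_8$ uses rows $1,3$ and columns $1,3$). More to the point, by Remark~\ref{jk} and Definition~\ref{widehati} the generator $s_{2i+1}=x_{2i+2}x_{2i+3}-x_{j_{2i+1}}x_{2i+4}$ specializes to $y_{i+1}^2-y_{i-1}y_{i+2}$ when $t_{i+1}=0$ and to $y_{i+1}^2-y_i y_{i+2}$ only when $t_{i+1}=1$; the bend changes which variable multiplies $y_{i+2}$, so it does affect the specialized relation. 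Your claimed relations can genuinely fail to lie in $J$: for $n=3$ and any $t$ with $t_3=0$, the degree-two part of $J$ is spanned by $y_1^2,\; y_1y_2,\; y_2^2-y_1y_3,\; y_2y_3,\; y_3^2-y_1y_4,\; y_3y_4,\; y_4^2$, and $y_3^2-y_2y_4$ is not in that span. So the list of elements on which your nilpotence induction runs is not available for such $t$.

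The conclusion is still reachable by essentially your induction once the specialization is recorded correctly, and this is what the paper does: using the bookkeeping of the $j_k$ (Proposition~\ref{det}, Remark~\ref{jk}, Definition~\ref{widehati}), the specialized generators always have the form $y_{i+1}^2-c\,y_{i+2}$ and $y_iy_{i+1}-c'\,y_{i+2}$ with $c,c'$ equal to $0$ or to a variable of strictly smaller index ($c\in\{y_{i-1},y_i\}$), together with $y_1^2$ and $y_{n+1}^2$; then knowing $y_{i-1},y_i$ lie in the radical forces $y_{i+1}^2$, hence $y_{i+1}$, into the radical, and the quotient is Artinian as desired. So your proposal is repairable, but as written its key structural lemma is false for general $t$, and the proof does not go through without replacing the antidiagonal/Hankel claim by the $t$-dependent description of the specialized generators.
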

\noindent Since $R(n,t)$ is Cohen-Macaulay, the linear system of parameters above is a regular sequence (Corollary \ref{xt}).  

With the aim of obtaining the multiplicity and regularity of $R(n,t)$, we form an Artinian quotient of $R(n,t)$ by the regular sequence above and call it $\widehat{R(n,t)}$. We note that $\widehat{R(n,t)}$ does not denote the completion, and explain the choice of notation in Definition~\ref{widehatnotation}.  

Using a convenient vector space basis for $\widehat{R(n,t)}$ established in Lemma~\ref{uniquerep}, we show the coefficients of the Hilbert series for $\widehat{R(n,t)}$.  

\begin{thm}[Theorem \ref{Hilbert Series mod reg seq}]

If $R(n,t)=S(n)/I_{G_n^t}$ and $\widehat{R(n,t)}\cong R(n,t)/(\overline{X_n})$, we have 
\begin{equation*}
    {\displaystyle \dim_k(\widehat{R(n,t)})_{i}=\begin{cases}
1 & i=0\\
{\displaystyle \frac{1}{i!}\prod_{j=1}^{i}(n+j-2(i-1))} & i\geq 1.
\end{cases}}
\end{equation*}
In particular, $\dim_k(\widehat{R(n,t)})_{i}=0$ when $i>n/2+1$.

\end{thm}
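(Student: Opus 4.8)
The plan is to reduce the statement to a combinatorial count, using the explicit $k$-basis of $\widehat{R(n,t)}$ furnished by Lemma~\ref{uniquerep}. Passing to the Artinian reduction identifies, via the differences in $\overline{X_n}$, the two edge variables lying on each interior antidiagonal of $L_n^t$ and kills the two variables on the extreme antidiagonals, so $\widehat{R(n,t)}$ is a quotient of a polynomial ring in the $n+1$ ``antidiagonal classes'' $y_1,\dots,y_{n+1}$, with $y_k$ the common image of $x_{2k}$ and $x_{2k+1}$. Lemma~\ref{uniquerep} provides a distinguished set $\mathcal{B}$ of monomials in the $y_k$ that descends to a basis of $\widehat{R(n,t)}$; granting this, $\dim_k(\widehat{R(n,t)})_i$ is exactly the number of degree-$i$ elements of $\mathcal{B}$, and the $i=0$ case is immediate since the only such monomial is the empty one.

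The core of the argument is then to count $\mathcal{B}$ in degree $i\ge 1$. I expect $\mathcal{B}$ to consist of the squarefree monomials $y_{k_1}\cdots y_{k_i}$ whose index sets $\{k_1<\cdots<k_i\}\subseteq\{1,\dots,n+1\}$ contain no two consecutive integers — the relations of $\widehat{R(n,t)}$ forcing $y_k^2=0$ and $y_ky_{k+1}=0$, with all longer standard monomials assembled from these. To count such sets I would use the standard ``gap'' bijection $\{k_1<\cdots<k_i\}\mapsto\{k_1,\,k_2-1,\,\dots,\,k_i-(i-1)\}$, which identifies them with arbitrary $i$-subsets of $\{1,\dots,n+1-(i-1)\}=\{1,\dots,n-i+2\}$, of which there are $\binom{n-i+2}{i}$. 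It then remains to check the elementary identity
\[
\binom{n-i+2}{i}=\frac{(n-i+2)!}{i!\,(n-2i+2)!}=\frac{1}{i!}\prod_{j=1}^{i}\bigl(n+j-2(i-1)\bigr),
\]
since the $i$ consecutive factors $n+j-2(i-1)$ for $j=1,\dots,i$ run precisely from $n-2i+3$ to $n-i+2$. The vanishing assertion is then immediate from either description: a non-consecutive subset of $\{1,\dots,n+1\}$ has size at most $\lceil (n+1)/2\rceil$, and correspondingly $\binom{n-i+2}{i}=0$ once $n-i+2<i$, that is, once $i>n/2+1$.

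I expect the only genuinely substantive point to be the input from Lemma~\ref{uniquerep}: that the proposed monomials are both linearly independent and spanning after passing to the Artinian reduction, equivalently that quotienting (the initial ideal of) $I_{G_n^t}$ by the linear system of parameters $\overline{X_n}$ introduces exactly the relations $y_k^2$ and $y_ky_{k+1}$ and nothing further. Once that structural fact is established, the remainder is a routine bijective count and a factorial manipulation. As an alternative to the direct count, one could instead verify that $\mathcal{B}$ — hence the Hilbert function — satisfies the two-step recursion
\[
\dim_k(\widehat{R(n,t)})_i=\dim_k(\widehat{R(n-1,t)})_i+\dim_k(\widehat{R(n-2,t)})_{i-1},
\]
obtained by splitting $\mathcal{B}$ according to whether the class $y_{n+1}$ occurs (and if it does, noting that $y_n$ cannot), and then check that $\binom{n-i+2}{i}$ satisfies the same recursion with the appropriate small base cases; this gives an induction on $n$ and recovers the closed form after unwinding Pascal's rule.
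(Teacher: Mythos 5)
Your proposal is correct (granting Lemma~\ref{uniquerep}, which you are entitled to cite), but it takes a genuinely different route from the paper. You count the basis of Lemma~\ref{uniquerep} directly: after relabeling the surviving odd variables as $y_1,\dots,y_{n+1}$, the degree-$i$ basis monomials correspond to $i$-subsets of $\{1,\dots,n+1\}$ with no two consecutive entries, the gap bijection gives $\binom{n-i+2}{i}$ of them, and the displayed product is just the expanded form of this binomial coefficient; the vanishing for $i>n/2+1$ falls out of the same count. The paper instead establishes the recursion $d_{n,i}=d_{n-1,i}+d_{n-2,i-1}$ (Lemma~\ref{recursion}) via an explicit short exact sequence of vector spaces built from multiplication by $x_{2n+3}$, and then proves the closed formula by induction on $n$ with base cases $n=0,1$ --- essentially the ``alternative'' you sketch at the end, since splitting the basis according to whether $y_{n+1}$ divides a monomial is the counting shadow of that exact sequence. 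Your route is more direct and produces the cleaner closed form $\binom{n-i+2}{i}$ in one step; the paper's route has the advantage that the recursion is needed again anyway, as it drives the Fibonacci length and multiplicity results in Proposition~\ref{Fibonacci}. One small caution: in $\widehat{R(n,t)}$ the actual relations $\widehat{s_k}$ are not literally $y_k^2=0$ and $y_ky_{k+1}=0$ --- those binomials have second terms $x_{J_k}x_{2i+5}$ (Definition~\ref{widehati}), and only the initial ideal is generated by the pure squares and consecutive products --- so you should phrase the count as an application of the stated basis of Lemma~\ref{uniquerep} (equivalently, of Macaulay's theorem applied to that initial ideal) rather than as a presentation of $\widehat{R(n,t)}$ by those monomial relations; with that phrasing the argument is complete.
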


\noindent As a corollary, we obtain the regularity of $R(n,t)$, which is equal to the top nonzero degree of $\widehat{R(n,t)}$.  

\begin{cor}[Corollary \ref{regcor}]
    For $G_n^t\in\mathcal{F}$,
    \begin{equation*}
    \reg R(n,t)=\left\lfloor n/2 \right \rfloor+1.
\end{equation*}
\end{cor}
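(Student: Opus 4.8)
The plan is to derive the regularity of $R(n,t)$ directly from the Hilbert series computation in Theorem~\ref{Hilbert Series mod reg seq}. The key observation is the standard fact that Castelnuovo-Mumford regularity is preserved under quotient by a linear regular sequence: since $\overline{X_n}$ is a linear system of parameters which, by Corollary~\ref{xt}, is a regular sequence on the Cohen-Macaulay ring $R(n,t)$, we have $\reg R(n,t) = \reg \widehat{R(n,t)}$. Moreover, $\widehat{R(n,t)}$ is Artinian (being $R(n,t)$ modulo a full system of parameters), so its regularity equals its top nonzero degree, i.e. the largest $i$ with $\dim_k(\widehat{R(n,t)})_i \neq 0$. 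Thus the corollary reduces to reading off this top degree from the formula in Theorem~\ref{Hilbert Series mod reg seq}.

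First I would invoke Theorem~\ref{Hilbert Series mod reg seq} to get that $\dim_k(\widehat{R(n,t)})_i = \frac{1}{i!}\prod_{j=1}^i (n+j-2(i-1))$ for $i \geq 1$, together with the stated vanishing $\dim_k(\widehat{R(n,t)})_i = 0$ for $i > n/2+1$. Next I would analyze exactly when this product is nonzero: the factors are $n - 2(i-1) + j$ for $j = 1, \dots, i$, which range over the consecutive integers from $n - 2i + 3$ up to $n - i + 2$. This product of consecutive integers is nonzero precisely when none of these integers is zero, i.e. when $0$ does not lie in the interval $[n-2i+3,\, n-i+2]$. Combined with the a priori vanishing for large $i$, I would check that the largest $i$ for which the dimension is nonzero is $i = \lfloor n/2 \rfloor + 1$; this amounts to a short case analysis on the parity of $n$, verifying that at $i = \lfloor n/2\rfloor + 1$ all the consecutive factors are strictly positive (so the dimension is a positive integer) while at $i = \lfloor n/2\rfloor + 2$ either the vanishing range kicks in or one of the factors is zero.

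Finally I would assemble these pieces: $\reg R(n,t) = \reg\widehat{R(n,t)} = \max\{\, i : \dim_k(\widehat{R(n,t)})_i \neq 0\,\} = \lfloor n/2\rfloor + 1$. I expect the only mildly delicate point to be the parity bookkeeping in identifying the top nonzero degree — confirming that the formula does not accidentally vanish one step earlier and that the claimed top degree is consistent with the ``in particular'' clause of Theorem~\ref{Hilbert Series mod reg seq}. The ring-theoretic inputs (preservation of regularity under a linear regular sequence, and regularity of an Artinian graded algebra equaling its top nonzero degree) are standard and can be cited, so the main content of the proof is this elementary extremal analysis of the product formula.
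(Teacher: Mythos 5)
Your proposal is correct and follows essentially the same route as the paper's primary proof: pass to the Artinian quotient $\widehat{R(n,t)}$ by the linear regular sequence $\overline{X_n}$, identify $\reg R(n,t)$ with the top nonzero degree of $\widehat{R(n,t)}$, and verify via the product formula of Theorem~\ref{Hilbert Series mod reg seq} that $\dim_k(\widehat{R(n,t)})_{\lfloor n/2\rfloor+1}\neq 0$ while higher degrees vanish. The only difference is cosmetic: you check nonvanishing by a parity case analysis on the consecutive factors, whereas the paper argues directly that no factor $n+j-2\lfloor n/2\rfloor$ can be zero for $j\geq 1$.
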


\noindent We include an alternate graph-theoretic proof of the result above at the end of this work.  Beginning with an upper bound from \cite{BIERMANN2017} (or equivalently for our purposes, one from \cite{HH20}) and then identifying the initial ideal $\init I_{G_n^t}$ with the edge ideal of a graph, we use results from \cite{conca2018squarefree} (allowing us to use $\init I_{G_n^t}$ instead of $I_{G_n^t}$) and then \cite{Tai-Ha-Van-Tuyl-2008} for a lower bound which agrees with our upper bound.

From a recursion established in Lemma~\ref{recursion}, we go on to prove a Fibonacci relationship between the lengths of the Artinian rings $\widehat{R(n,t)}$ in Proposition~\ref{Fibonacci}, and obtain the multiplicity of $R(n,t)$ as a corollary.  In the following, we drop $t$ for convenience.

\begin{cor}[Corollary \ref{multcor}]
    For $n\geq 2$, there is an equality of multiplicities \[
    e(R(n))=e(R(n-1))+e(R(n-2)).\]  In particular, 
    \[e(R(n))=F\left(n+3\right)=\frac{(1+\sqrt{5})^{n+3}-(1-\sqrt{5})^{n+3}}{2^{n+3}\sqrt{5}}.
    \]
\end{cor}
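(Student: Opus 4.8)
The plan is to reduce this to the length recursion already established and a short Binet-formula computation, with essentially no new technical content. First I would invoke the standard fact that a standard graded Cohen--Macaulay $k$-algebra $R$ of Krull dimension $d$ has multiplicity $e(R)=\dim_k\bigl(R/(\theta_1,\dots,\theta_d)R\bigr)$ whenever $\theta_1,\dots,\theta_d$ is a linear system of parameters: such a sequence generates a reduction of the graded maximal ideal, $R$ is then a finite free module over the polynomial subring $k[\theta_1,\dots,\theta_d]$, and its rank equals both $e(R)$ and $\dim_k\bigl(R/(\theta_1,\dots,\theta_d)R\bigr)$. Applying this with $R=R(n,t)$, $d=n+3$ by Theorem \ref{dimension}, and $\theta_1,\dots,\theta_{n+3}$ the images of the entries of $X_n$ --- a linear system of parameters by Proposition \ref{sopedgering}, hence a regular sequence by Corollary \ref{xt} since $R(n,t)$ is Cohen--Macaulay --- yields $e(R(n,t))=\dim_k\widehat{R(n,t)}$. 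In particular the multiplicity is independent of $t$, which licenses writing $e(R(n))$.

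Given this, the recursion $e(R(n))=e(R(n-1))+e(R(n-2))$ for $n\ge 2$ is immediate from Proposition \ref{Fibonacci}, which asserts exactly this relation for the lengths $\dim_k\widehat{R(n,t)}$. For the closed form I would evaluate the Hilbert series of Theorem \ref{Hilbert Series mod reg seq} in the two smallest cases: for $n=0$ the nonzero graded pieces of $\widehat{R(n,t)}$ lie in degrees $0$ and $1$, of dimensions $1$ and $1$, so $e(R(0))=2=F(3)$; for $n=1$ they lie in degrees $0$ and $1$, of dimensions $1$ and $2$, so $e(R(1))=3=F(4)$. Since $e(R(n))$ and $F(n+3)$ satisfy the same second-order linear recurrence and agree at $n=0$ and $n=1$, induction gives $e(R(n))=F(n+3)$ for all $n$; substituting Binet's formula $F(m)=(\varphi^m-\psi^m)/\sqrt5$ with $\varphi=(1+\sqrt5)/2$ and $\psi=(1-\sqrt5)/2$ and clearing denominators produces the stated expression $\bigl((1+\sqrt5)^{n+3}-(1-\sqrt5)^{n+3}\bigr)/\bigl(2^{n+3}\sqrt5\bigr)$.

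The substantive work is all upstream --- constructing the linear system of parameters (Proposition \ref{sopedgering}), computing the Hilbert series of $\widehat{R(n,t)}$ (Theorem \ref{Hilbert Series mod reg seq}), and proving the length recursion (Proposition \ref{Fibonacci}). Here the only step that needs care is the first one: one must quote the multiplicity identity in the precise form valid for a \emph{linear} system of parameters of a Cohen--Macaulay standard graded ring, so that passing to $\widehat{R(n,t)}$ literally computes the multiplicity rather than only bounding it.
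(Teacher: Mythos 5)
Your proposal is correct and takes essentially the same route as the paper: both reduce $e(R(n))$ to $\ell(\widehat{R(n,t)})$ via the linear regular sequence $\overline{X_n}$ (the paper through the identity $\Hilb_{R(n)}(t)(1-t)^{d}=\Hilb_{\widehat{R(n)}}(t)$ and the definition of multiplicity, you through the standard Cohen--Macaulay reduction/free-rank argument for a linear system of parameters) and then invoke Proposition~\ref{Fibonacci}. Your re-derivation of the closed form from the base cases $n=0,1$ is harmless but redundant, since Proposition~\ref{Fibonacci} already gives $\ell(\widehat{R(n)})=F(n+3)$ directly.
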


For more background, detail, and motivation, we refer the reader to \cite{LauraThesis}, but note that different notation and indexing conventions have been employed in this work.  Throughout, $k$ is a field.

\vspace{12pt}

\noindent \textbf{Acknowledgements.} Macaulay2 \cite{M2} was used for computation and hypothesis formation. We would like to thank Syracuse University for its support and hospitality and Claudia Miller for her valuable input on the original project in \cite{LauraThesis} and this condensed version.  We also acknowledge the partial support of an NSF grant.

\section{The Family of Toric Rings} 
\label{family}

In the following, we define a family of toric rings $R(n,t)$ coming from an iterative chordal bipartite family of graphs, $\mathcal{F}$.  We show that although one subfamily of these rings comes from join-meet ideals of a (distributive) lattice and has some easily derived algebraic invariants, this is not true in general. The reader may find the definition of the toric ideal of a graph in Section~\ref{toricringsforF}, when it becomes relevant to the discussion.  We recall for the reader that a \textit{chordal bipartite} graph is a bipartite graph in which every cycle of length greater than or equal to six has a chord. 

\subsection{The Family \texorpdfstring{$\mathcal{F}$}{F} of Graphs}

Below, we define the family $\mathcal{F}$ of chordal bipartite graphs iteratively from a family of ladder-like structures $L_n^t$.  We note that the quantities involved in the following definition follow patterns as follows: 

\begin{center}
\begin{tabular}{ |c|c|c| } 
 \hline
 $n$ & $\left \lfloor{n/2}\right \rfloor+2$ & $\left \lceil{n/2}\right \rceil+2$\\ 
  \hline
 0 & 2 & 2\\ 
 1 & 2 & 3\\ 
 2 & 3 & 3\\ 
 3 & 3 & 4\\ 
 \vdots & \vdots & \vdots\\ 
 \hline
\end{tabular}
\end{center}

\begin{defn}\label{Ln}
For each $n\geq 0$ and each $t\in \F_2^{n+1}$, we construct a ladder-like structure $L_n^t$ with $(\left \lfloor{n/2}\right \rfloor+2)$ rows and $(\left \lceil{n/2}\right \rceil+2)$ columns and with nonzero entries in the set $\{x_0,x_2,x_3,\ldots,x_{2n+4}\}$.  To do so, we use the notation $\widehat{t}\in\F_2^{n}$ for the first $n$ entries of $t$, that is, all except the last entry. The construction is as follows, where throughout, indices of entries in $L_n^t$ are strictly increasing from left to right in each row and from top to bottom in each column.  We note that $L_n^t$ does not depend on $t$ for $n<2$, but does for $n\geq 2$.
\begin{itemize}
    \item For $n=0$, the ladder-like structure $L_0^0=L_0^1$ is 
    \[
    \begin{matrix}
    x_0 & x_2 \\
    x_3 & x_4 
    \end{matrix}
    \]
    \item For $n=1$, to create $L_1^t$ (regardless of what $t$ is in $\F_2^2$), we add another column with the entries $x_5$ and $x_6$ to the right of $L_0^{\widehat{t}}$ to obtain
    \[
    \begin{matrix}
    x_0 & x_2 & x_5\\
    x_3 & x_4 & x_6
    \end{matrix}
    \]
    \item For $2\leq n\equiv 0 \mod 2 (\equiv 1 \mod 2)$, to create $L_n^t$, we add another row (column) with the entries $x_{2n+3},x_{2n+4}$ below (to the right of) $L_{n-1}^{\widehat{t}}$ in the following way:
        \begin{itemize}
            \item [$\circ$] The entry $x_{2n+4}$ is in the ultimate row and column, row $\left \lfloor{n/2}\right \rfloor+2$ and column $\left \lceil{n/2}\right \rceil+2$.
            \item [$\circ$] The entry $x_{2n+3}$ is in the new row (column) in a position directly below (to the right of) another nonzero entry in $L_n^t$.
            \begin{itemize}
                \item If the last entry of $t$ is $0$, $x_{2n+3}$ is directly beneath (to the right of) the first nonzero entry in the previous row (column).
                \item If the last entry of $t$ is $1$, $x_{2n+3}$ is directly beneath (to the right of) the second nonzero entry in the previous row (column).
            \end{itemize}
        \end{itemize}
\end{itemize}

\noindent In this way, the entries in $t$ determine the choice at each stage for the placement of $x_{2n+3}$.
\end{defn}

\begin{rmk}\label{construction}
We note a few things about this construction for $n\equiv 0 \mod 2$ ($\equiv 1 \mod 2$), which may be examined in the examples below:
\begin{itemize}
    \item We note that $x_{2n+4}$ is directly beneath (to the right of) $x_{2n+2}$.
    \item We note that the only entries in row $\left \lfloor{n/2}\right \rfloor+1$ (column $\left \lceil{n/2}\right \rceil+1$) of $L_{n-1}^{\widehat{t}}$ are $x_{2n-1}$, $x_{2n}$, and $x_{2n+2}$, so that the choices listed for placement of $x_{2n+3}$ are the only cases.  In particular, $t_{n+1}=0$ if and only if $x_{2n+3}$ is directly beneath (to the right of) $x_{2n-1}$, and $t_{n+1}=1$ if and only if $x_{2n+3}$ is directly beneath (to the right of) $x_{2n}$.
    \item Finally, we note that the only entries in column $\left \lceil{n/2}\right \rceil+2$ (row $\left \lfloor{n/2}\right \rfloor+2$) of $L_{n}^{t}$ are $x_{2n+1}$, $x_{2n+2}$, and $x_{2n+4}$, and that the only entries in row $\left \lfloor{n/2}\right \rfloor+2$ (column $\left \lceil{n/2}\right \rceil+2$) of $L_{n}^{t}$ are $x_{2n+3}$ and $x_{2n+4}$.
\end{itemize}
\end{rmk}

\begin{exmp}\label{mexmp1}
    We have
\[
L_2^{(1,1,1)}=
\begin{matrix}
x_0 & x_2 & x_5\\
x_3 & x_4 & x_6\\
    & x_7 & x_8
\end{matrix}
\hspace{2cm}
L_2^{(0,0,0)}=
\begin{matrix}
x_0 & x_2 & x_5\\
x_3 & x_4 & x_6\\
x_7 &     & x_8
\end{matrix}
\]

In either of the cases above, we could go on to construct $L_3^{\widehat{t}}$ and $L_4^t$ in the following way: For $n=3$, place $x_{10}$ to the right of $x_8$ and place $x_9$ to the right of either $x_5$ or $x_6$, depending whether the last entry of $\widehat{t}$ is $0$ or $1$, respectively.  Then for $n=4$, place $x_{12}$ below $x_{10}$ and place $x_{11}$ below either $x_7$ or $x_8$, depending whether the last entry of $t$ is $0$ or $1$, respectively.

\end{exmp}

\begin{exmp}\label{mexmp2}

In fact, when the entries of $t$ are all ones, we see that $L_n^{(1,1,\ldots,1)}$ has a ladder shape (is a two-sided ladder for $n\geq 3)$, shown below in the case when $2\leq n\equiv 0 \mod 2$:
\[
\begin{matrix}
x_0 & x_2 & x_5 &\\
x_3 & x_4 & x_6 & x_9 &\\
 & x_7 & x_8 & x_{10} & x_{13} &\\
 &  & x_{11} & x_{12} & x_{14} & x_{17} &\\
 &  &  & x_{15} & x_{16} & x_{18} & x_{21} &\\
 &  &  &  & x_{19} & x_{20} & x_{22} & x_{25} &\\
 &  &  &  &  & x_{23} & x_{24} & x_{26} & \ddots &\\
 &  &  &  &  &  & x_{27} & x_{28} & \ddots & x_{2n+1}\\
 &  &  &  &  &  &  & \ddots & \ddots & x_{2n+2}\\
 &  &  &  &  &  &  &        & x_{2n+3} & x_{2n+4}.
\end{matrix}
\]
We denote the subfamily of graphs coming from $t=(1,1,\ldots,1)$ by $\mathcal{F}_1\subset \mathcal{F}$.

When the entries of $t$ are all zeros, $L_n^{(0,0,\ldots,0)}$ has the following structure, shown below in the case when $2\leq n\equiv 0 \mod 2$:
\[\begin{matrix}
x_0 & x_2 & x_5 & x_9 & x_{13} & x_{17} & x_{21} & x_{25} & \cdots & x_{2n+1}\\
x_3 & x_4 & x_6\\
x_7 &  & x_8 & x_{10}\\
x_{11} &  &  & x_{12} & x_{14}\\
x_{15} &  &  &  & x_{16} & x_{18}\\
x_{19} &  &  &  &  & x_{20} & x_{22}\\
x_{23} &  &  &  &  &  & x_{24} & x_{26}\\
x_{27} &  &  &  &  &  &  & x_{28} & \ddots\\
\vdots &  &  &  &  &  &  &  & \ddots & x_{2n+2}\\
x_{2n+3} &  &  &  &  &  &  &  &  & x_{2n+4}.
\end{matrix}
\]

For a more varied example, we have $L_{16}^{(1,0,1,0,1,1,0,0,1,1,1,0,0,0,1,0,0)}$ below:
\[
\begin{matrix}
x_0        & x_2 & x_5 & x_9    &        &        &        &        &        &           \\
x_3        & x_4 & x_6 &        &        &        &        &        &        &           \\
           & x_7 & x_8 & x_{10} & x_{13} & x_{17} &        &        &        &           \\
           &     &x_{11}& x_{12}& x_{14} &        &        &        &        &           \\
           &     &x_{15}&       & x_{16} & x_{18} & x_{21} & x_{25} & x_{29} & x_{33}    \\
           &     &     &        & x_{19} & x_{20} & x_{22} &        &        &           \\
           &     &     &        &        & x_{23} & x_{24} & x_{26} &        &           \\
           &     &     &        &        & x_{27} &        & x_{28} & x_{30} &           \\
           &     &     &        &        &        &        & x_{31} & x_{32} & x_{34}    \\
           &     &     &        &        &        &        & x_{35} &        & x_{36}.
\end{matrix}
\]
\end{exmp}

\begin{defn}\label{graphtaudef}
    If we associate a vertex to each row and each column and an edge to each nonzero entry of $L_n^t$, we have a finite simple connected bipartite graph $G_n^t$. The set $V_r$ of vertices corresponding to rows and the set $V_c$ of vertices corresponding to columns form a bipartition of the vertices of $G_n^t$. We say a graph $G$ is in $\mathcal{F}$ if $G=G_n^t$ for some $n\geq 0$ and some $t\in\F_2^{n+1}$.
\end{defn}
\begin{rmk}\label{graphtaurmk}
We note that by construction $G_n^t$ has no vertices of degree one, since each row and each column of $L_n^t$ has more than one nonzero entry.  This ensures that for large $n$ our family is distinct from that studied in \cite{Ferrers}, since a Ferrers graph with bipartitation $V_1$ and $V_2$ with no vertices of degree one must have at least two vertices in $V_1$ of degree $|V_2|$ and at least two vertices in $V_2$ of degree $|V_1|$, impossible for our graphs when $n\geq 3$, as the reader may verify.  We also use the fact that $G_n^t$ has no vertices of degree one for an alternate proof of the regularity of $R(n,t)$ at the end of this work.
\end{rmk}

\begin{exmp}
    When $n=5$, $G_5^{(1,1,\ldots,1)}\in \mathcal{F}_1$ is 
    
    \tikzset{main node/.style={circle,fill=blue!20,draw,minimum size=20pt,inner sep=0pt},
            }
    
    \begin{center}
    
    \begin{tikzpicture}
    \node[main node] (1) {$r1$};
    \node[main node] (5) [below left = 1.4cm and 1.4cm of 1] {$c1$};
    \node[main node] (6) [below right = 1.4cm and 1.4cm of 1] {$c2$};
    \node[main node] (2) [below right = 1.4cm and 1.4cm of 5] {$r2$};
    \node[main node] (7) [left = 1.4cm of 5] {$c3$};
    \node[main node] (3) [above = 1.4cm of 1] {$r3$};
    \node[main node] (8) [right = 1.4cm of 6] {$c4$};
    \node[main node] (4) [below = 1.4cm of 2] {$r4$};
    \node[main node] (9) [left = 1.4cm of 7] {$c5$};

    \path[draw,thick]
    (1) edge node [below] {$x_0$}    (5)
    (1) edge node [below] {$x_2$}    (6)
    (1) edge node [above] {$x_5$}    (7)
    (2) edge node [above] {$x_3$}    (5)
    (2) edge node [above] {$x_4$}    (6)
    (2) edge node [below] {$x_6$}     (7)
    (2) edge node [left] {$x_9$}     (8)
    (3) edge node [right] {$x_7$}    (6)
    (3) edge node [left] {$x_8$}    (7)
    (3) edge node [right] {$x_{10}$}  (8)
    (3) edge node [left] {$x_{13}$}  (9)
    (4) edge node [right] {$x_{11}$} (7)
    (4) edge node [right] {$x_{12}$}  (8)
    (4) edge node [left] {$x_{14}$} (9)
    ;
\end{tikzpicture}

\end{center}

\end{exmp}

We develop properties of the $L_n^t$ which allow us to show in Section~\ref{edgeringsforF} that certain minors of the $L_n^t$ are generators for the toric rings of the corresponding graphs $G_n^t$.

\begin{defn}
    For this work, a \textit{distinguished minor} of $L_n^t$ is a $2$-minor involving only (nonzero) entries of the ladder-like structure $L_n^t$, coming from a $2\times 2$ subarray of $L_n^t$.
\end{defn}

\begin{prop}\label{det}
    For each $i\geq 1$ and each $f\in\F_2^{i+1}$, the entry $x_{2i+3}$ and the entry $x_{2i+4}$ each appear in exactly two distinguished minors of $L_i^f$.  For $i\equiv 0 \mod 2 (\equiv 1 \mod 2)$, these minors are of the form
    \[
    s_{2i}:=x_{2i+1}x_{2i+3}-x_{j_{2i}}x_{2i+4}
    \]
    coming from the subarray
    \[
    \begin{bmatrix}
        x_{j_{2i}} & x_{2i+1} \\
        x_{2i+3} & x_{2i+4}
    \end{bmatrix}
    \hspace{2cm}
    \left(
    \begin{bmatrix}
        x_{j_{2i}} & x_{2i+3} \\
        x_{2i+1} & x_{2i+4}
    \end{bmatrix}\right)
    \]
    for some $j_{2i}\in\{0,2,3,\ldots,2i-2\}$ and 
    \[
    s_{2i+1}:=x_{2i+2}x_{2i+3}-x_{j_{2i+1}}x_{2i+4}
    \]
    coming from the subarray
    \[
    \begin{bmatrix}
        x_{j_{2i+1}} & x_{2i+2} \\
        x_{2i+3} & x_{2i+4}
    \end{bmatrix}
    \hspace{2cm}
    \left(
    \begin{bmatrix}
        x_{j_{2i+1}} & x_{2i+3} \\
        x_{2i+2} & x_{2i+4}
    \end{bmatrix}
    \right)
    \]
    for some $j_{2i+1} \in\{2i-1,2i\}$, and the only distinguished minor of $L_n^t$ with indices all less than $5$ is $s_1:=x_2x_3-x_0x_4$.
\end{prop}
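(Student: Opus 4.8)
The plan is to prove the statement by strong induction on $i$, reading off everything from the iterative construction in Definition~\ref{Ln} and the structural facts collected in Remark~\ref{construction}; the base cases $i=1,2$ are finite checks (for $i=2$ only the two shapes of Example~\ref{mexmp1} occur), and the case $i\equiv 1\bmod 2$ will follow from the case $i\equiv 0\bmod 2$ by transposing rows and columns throughout. I would first dispose of the assertion about $s_1$: since $x_1$ never occurs and every entry introduced at a stage $j\ge 1$ has index $\ge 5$, the entries of index less than $5$ in any $L_n^t$ are exactly $x_0,x_2,x_3,x_4$, and by construction these always occupy the fixed top-left $2\times 2$ block; that block is therefore the unique $2\times 2$ subarray built from entries of index $<5$, and its minor is $s_1=x_2x_3-x_0x_4$.

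Now fix $i\equiv 0\bmod 2$. Passing from $L_{i-1}^{\widehat{f}}$ to $L_i^f$ appends a new bottom row whose only nonzero entries are $x_{2i+3}$, in some column $c$ directly below its parent in row $\lfloor i/2\rfloor+1$, and $x_{2i+4}$, in the last column directly below $x_{2i+2}$; here $c$ is strictly to the left of the last column. Since these are the only two nonzero entries of the new row, any $2\times 2$ subarray with all four entries nonzero that meets the bottom row must use exactly the columns $c$ and the last one, together with one other row, and so contains both $x_{2i+3}$ and $x_{2i+4}$. Hence the distinguished minors through $x_{2i+3}$, those through $x_{2i+4}$, and those meeting the bottom row form a single set, each member corresponding to a row $r$ of $L_{i-1}$ with $(r,c)$ and $(r,\text{last column})$ both nonzero. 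By Remark~\ref{construction} the last column is nonzero only at $x_{2i+1},x_{2i+2},x_{2i+4}$, so $r$ is the row $r_2$ of $x_{2i+2}$ (namely $\lfloor i/2\rfloor+1$) or the row $r_1$ of $x_{2i+1}$, and $r_1\ne r_2$ because $x_{2i+1}$ lies strictly above $x_{2i+2}$ in that column. For $r=r_2$: since $x_{2i+3}$ is placed below the first or second entry of row $\lfloor i/2\rfloor+1$, which by Remark~\ref{construction} consists of $x_{2i-1},x_{2i},x_{2i+2}$, we get $(r_2,c)=x_{j_{2i+1}}$ with $j_{2i+1}\in\{2i-1,2i\}$, and the $2\times 2$ block on rows $\{r_2,\text{bottom}\}$ and columns $\{c,\text{last}\}$ is exactly the one displayed for $s_{2i+1}$.

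It then remains to show that $(r_1,c)$ is nonzero of index at most $2i-2$; it is then $x_{j_{2i}}$ for some $j_{2i}\in\{0,2,3,\dots,2i-2\}$ (the value $1$ being excluded as $x_1$ is absent), which gives $s_{2i}$ from the $2\times 2$ block on rows $\{r_1,\text{bottom}\}$ and columns $\{c,\text{last}\}$ and shows there are exactly these two minors. By the construction, $r_1$ is the row of the parent $p\in\{x_{2i-3},x_{2i-2}\}$ of $x_{2i+1}$ and $c$ is the column of the parent $q\in\{x_{2i-1},x_{2i}\}$ of $x_{2i+3}$; moreover, by Remark~\ref{construction}, $x_{2i-3},x_{2i-2},x_{2i}$ are exactly the entries in the last column of $L_{i-2}$ (with $x_{2i}$ at its corner) and $x_{2i-1},x_{2i}$ are exactly the entries in its last row. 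So the task is to identify the entry of $L_{i-2}$ at position (row of $p$, column of $q$), and I expect recognizing that the induction hypothesis pins this down to be the one genuinely nontrivial step. If $q=x_{2i}$, its column is the last column of $L_{i-2}$, which contains $p$, so the entry is $p$ itself, of index $\le 2i-2$. If $q=x_{2i-1}$, apply the induction hypothesis to $L_{i-2}$, whose last two entries are $x_{2i-1}=x_{2(i-2)+3}$ and $x_{2i}=x_{2(i-2)+4}$: it yields that $x_{2i-1}$ lies in exactly the minor $s_{2i-4}$, from the block $\bigl[\begin{smallmatrix}x_{j_{2i-4}}&x_{2i-3}\\x_{2i-1}&x_{2i}\end{smallmatrix}\bigr]$, and the minor $s_{2i-3}$, from the block $\bigl[\begin{smallmatrix}x_{j_{2i-3}}&x_{2i-2}\\x_{2i-1}&x_{2i}\end{smallmatrix}\bigr]$, with $j_{2i-4}\in\{0,2,\dots,2i-6\}$ and $j_{2i-3}\in\{2i-5,2i-4\}$. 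Reading off these blocks, $x_{j_{2i-4}}$ sits at (row of $x_{2i-3}$, column of $x_{2i-1}$) and $x_{j_{2i-3}}$ sits at (row of $x_{2i-2}$, column of $x_{2i-1}$); hence the entry at (row of $p$, column of $q$) is $x_{j_{2i-4}}$ if $p=x_{2i-3}$ and $x_{j_{2i-3}}$ if $p=x_{2i-2}$, in either case nonzero of index $<2i-1$. This closes the induction.
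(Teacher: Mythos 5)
Your proof is correct, and it shares the paper's basic template: induction on the iterative construction, with the ``exactly two'' part obtained by the same counting argument (the new row contains only $x_{2i+3},x_{2i+4}$ and the last column only $x_{2i+1},x_{2i+2},x_{2i+4}$, by Remark~\ref{construction}), and odd $i$ handled by transposing, just as the paper's parenthetical convention does. Where you genuinely diverge is in how the fourth corner $x_{j_{2i}}$ is located. The paper applies the induction hypothesis one step back: the subarray of $L_{i-1}^{\widehat{f}}$ furnished by $s_{2i-2}$ (if $t_{i+1}=0$) or $s_{2i-1}$ (if $t_{i+1}=1$) sits directly above the new entries, producing a single $3\times 2$ subarray from which both $s_{2i}$ and $s_{2i+1}$ are read off, with only the case split on $t_{i+1}$. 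You instead track the parents $p$ of $x_{2i+1}$ and $q$ of $x_{2i+3}$ and, when $q=x_{2i-1}$, invoke the induction hypothesis two steps back, on $L_{i-2}$, to identify the entry at (row of $p$, column of $q$); this costs a finer case split, the extra base case $i=2$, and strong induction, but it makes the resulting values $j_{2i}\in\{j_{2i-4},\,j_{2i-3},\,2i-3,\,2i-2\}$ explicit, essentially re-deriving the recursion the paper records afterwards in Remark~\ref{jk}. The paper's one-step-back argument is leaner; yours reaches the same conclusion with a somewhat more detailed picture of how the $j_k$ propagate, and I see no gap in it.
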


\begin{proof}
    The last statement is clear by Definition~\ref{Ln}; we prove the remaining statements by induction on $i$.  For $i=1$, we have the distinguished minors $s_2=x_3x_5-x_0x_6$ and $s_3=x_4x_5-x_2x_6$ coming from the subarrays
    \[
    \begin{bmatrix}
        x_0 & x_5 \\
        x_3 & x_6
    \end{bmatrix}
    \]
    and
    \[
    \begin{bmatrix}
        x_2 & x_5 \\
        x_4 & x_6
    \end{bmatrix}
    \]
    where $j_2=0\in\{0\}$ and $j_3=2\in\{1,2\}$, so we have our base case. Now suppose the statement is true for $i$ with $1\leq i<n$, and let $n\equiv 0\mod 2 (\equiv 1 \mod 2)$ and $t\in\F_2^{n+1}$.  
    
    Case 1: If $t_{n+1}=0$, then by Remark~\ref{construction}, $x_{2n+3}$ is in the same column (row) as $x_{2n-1}$.  By induction, we have the distinguished minor $s_{2n-2}=x_{2n-1}x_{2n+1}-x_{j_{2n-2}}x_{2n+2}$ coming from the subarray
    \[
    \begin{bmatrix}
        x_{j_{2n-2}} & x_{2n+1} \\
        x_{2n-1} & x_{2n+2}
    \end{bmatrix}
    \hspace{2cm}
    \left(
    \begin{bmatrix}
        x_{j_{2n-2}} & x_{2n-1} \\
        x_{2n+1} & x_{2n+2}
    \end{bmatrix}
    \right).
    \]
    Then in fact we have a subarray of the form 
    \[
    \begin{bmatrix}
        x_{j_{2n-2}} & x_{2n+1} \\
        x_{2n-1} & x_{2n+2}\\
        x_{2n+3} & x_{2n+4}
    \end{bmatrix}
    \hspace{2cm}
    \left(
    \begin{bmatrix}
        x_{j_{2n-2}} & x_{2n-1} & x_{2n+3}\\
        x_{2n+1} & x_{2n+2} & x_{2n+4}
    \end{bmatrix}
    \right),
    \]
    so that we have the distinguished minors
    \begin{eqnarray*}
    s_{2n}&=&x_{2n+1}x_{2n+3}-x_{j_{2n-2}}x_{2n+4}\\
    s_{2n+1}&=&x_{2n+2}x_{2n+3}-x_{2n-1}x_{2n+4}
    \end{eqnarray*}
    with 
    \[
    j_{2n}=j_{2n-2}\in\{0,2,3,\ldots,2n-4\}\subset\{0,2,3,\ldots,2n-2\}
    \]
    by induction and with 
    \[
    j_{2n+1}=2n-1\in\{2n-1,2n\}.  
    \]
    Since the only entries in row $\left \lfloor{n/2}\right \rfloor+2$ (column $\left \lceil{n/2}\right \rceil+2$) of $L_n^t$ are $x_{2n+3}$ and  $x_{2n+4}$ and since the only entries in column $\left \lceil{n/2}\right \rceil+2$ (row $\left \lfloor{n/2}\right \rfloor+2$) of $L_n^t$ are $x_{2n+1}$, $x_{2n+2}$, and $x_{2n+4}$ by Remark~\ref{construction}, these are the only distinguished minors of $L_n^t$ containing either $x_{2n+3}$ or $x_{2n+4}$, as desired.
    
    Case 2 for $t_{n+1}=1$ is analogous and yields 
    \[
    j_{2n}=j_{2n-1}\in\{2n-3,2n-2\}\subset\{0,2,3,\ldots,2n-2\}
    \]
    and
    \[
    j_{2n+1}=2n\in\{2n-1,2n\}.  
    \]
    
\end{proof}

\begin{defn}
    Define the integers $j_{2i}$, $j_{2i+1}$ for $j_2,\ldots,j_{2n+1}$ as in the statement of Proposition~\ref{det}.  We note in the remark below some properties of the $j_k$.
\end{defn}

\begin{rmk}\label{jk}
    From the proof of Proposition~\ref{det}, we note that $j_2=0$, $j_3=2$, and that for $i\geq 2$, we have the following:  
    \begin{eqnarray*}
    t_{i+1}&=&0\iff j_{2i}=j_{2i-2}\iff j_{2i+1}=2i-1\\
    t_{i+1}&=&1\iff j_{2i}=j_{2i-1}\iff j_{2i+1}=2i.
    \end{eqnarray*}
    For the sake of later proofs, we extend the notion of $j_k$ naturally to $s_1=x_2x_3-x_0x_4$ and say that $j_1=0$, and note the following properties of the $j_k$ for $1 \leq k \leq 2n+1$:
    \begin{itemize}
        \item We have $j_{2i}\in\{j_{2i-2},j_{2i-1}\}$ and $j_{2i}\leq 2i-2$. Indeed, for $i=1$, $j_2=j_1=0$, and for $i\geq 2$, this is clear from the statement above.
        \item We have $j_{2i+1}\in\{2i-1,2i\}$.  Indeed, for $i=0$, $j_1=0\in\{-1,0\}$, for $i=1$, $j_3=2\in\{1,2\}$, and for $i\geq 2$, this follows from the statement above.
        \item The $j_{2i}$ form a non-decreasing sequence.  Indeed, for $i\geq 2$, either $j_{2i}=j_{2i-2}$ or $j_{2i}=j_{2i-1}\geq 2i-3>2i-4\geq j_{2i-2}$.  
    \end{itemize}
\end{rmk}

\begin{rmk}\label{subarray}
   We also note from the proof above that the following is a subarray of $L_n^t$ for all $i\equiv 0 \mod 2 (\equiv 1 \mod 2)$ such that $1\leq i\leq n$, which we use in the proof of the proposition below:
    \[
    \begin{bmatrix}
        x_{j_{2i}} & x_{2i+1} \\
        x_{j_{2i+1}} & x_{2i+2}\\
        x_{2i+3} & x_{2i+4}
    \end{bmatrix}
    \hspace{2cm}
    \left(
    \begin{bmatrix}
        x_{j_{2i}} & x_{j_{2i+1}} & x_{2i+3} \\
        x_{2i+1} & x_{2i+2} & x_{2i+4}
    \end{bmatrix}
    \right)
    \]
\end{rmk}

\begin{prop}\label{chordal}

For $n\geq 0$, each graph $G_n^t\in \mathcal{F}$ is chordal bipartite with vertex bipartition $V_r\cup V_c$ of cardinalities
\begin{eqnarray*}
    |V_r|=\left \lfloor \frac{n}{2} \right \rfloor +2\\
    |V_c|=\left \lceil \frac{n}{2} \right \rceil +2.
\end{eqnarray*}

\end{prop}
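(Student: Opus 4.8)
The plan is to read off the numerical data at once and then prove chordal bipartiteness by induction on $n$. Bipartiteness and the cardinalities need no work: by Definition~\ref{graphtaudef} the vertices of $G_n^t$ biject with the rows and columns of $L_n^t$, which number $\lfloor n/2\rfloor+2$ and $\lceil n/2\rceil+2$ by Definition~\ref{Ln}, and $V_r, V_c$ partition them. So it remains to show $G_n^t$ has no induced cycle of length $\ge 6$ (equivalently, every cycle of length $\ge 6$ has a chord). I would induct on $n$; the base cases $n\le 1$ are trivial, since then $\min(|V_r|,|V_c|)=2$ and $G_n^t$ has no cycle of length $\ge 6$ at all.

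For $n\ge 2$, the point is that $G_n^t$ is obtained from $G_{n-1}^{\widehat t}$ by adjoining a single new vertex $v$ of degree $2$, as is visible from Definition~\ref{Ln} and Remark~\ref{construction}. Take $n$ even (the odd case is identical after interchanging the roles of rows and columns throughout). Then $v$ is the vertex of the new row of $L_n^t$, which contains only $x_{2n+3}$ and $x_{2n+4}$; thus $N(v)=\{c',c_\ell\}$, where $c_\ell$ is the vertex of the last column of $L_{n-1}^{\widehat t}$ (the column of $x_{2n+4}$) and $c'$ is the vertex of the column of $x_{2n+3}$, i.e.\ the column of $x_{2n-1}$ if $t_{n+1}=0$ and the column of $x_{2n}$ if $t_{n+1}=1$, by Remark~\ref{construction}.

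I would then use the elementary fact that if $H$ is chordal bipartite and $v$ is adjoined with $N(v)=\{a,b\}$ ($a,b$ in one part) and $N_H(a)\subseteq N_H(b)$, then $H+v$ is chordal bipartite: an induced cycle of length $\ge 6$ in $H+v$ cannot avoid $v$, so it reads $v,a,u_1,\dots,u_k,b,v$ with $k\ge 3$ and $a,u_1,\dots,u_k,b$ an induced path of $H$; but $u_1\in N_H(a)\subseteq N_H(b)$, so the edge $u_1 b$ chords the cycle, a contradiction. Applied with $H=G_{n-1}^{\widehat t}$ (chordal bipartite by induction), $a=c_\ell$, $b=c'$, this reduces the inductive step to the single claim that in $G_{n-1}^{\widehat t}$ we have $N(c_\ell)\subseteq N(c')$. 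By Remark~\ref{construction} the last column of $L_{n-1}^{\widehat t}$ contains only $x_{2n+1}$ and $x_{2n+2}$, so $N(c_\ell)$ is the pair of row-vertices of these two entries, and it suffices to show that each of their rows meets the column of $x_{2n-1}$ or of $x_{2n}$ (whichever equals $c'$). For $x_{2n+2}$ this is immediate, since its row is the last row of $L_{n-1}^{\widehat t}$, whose entries are exactly $x_{2n-1},x_{2n},x_{2n+2}$. For $x_{2n+1}$ I would invoke Remark~\ref{subarray} with $i=n-1$, which furnishes a $2\times 3$ subarray of $L_{n-1}^{\widehat t}$ with bottom row $x_{2n-1},x_{2n},x_{2n+2}$ and top row $x_{j_{2n-2}},x_{j_{2n-1}},x_{2n+1}$ occupying the same three columns; hence the row of $x_{2n+1}$ meets the columns of $x_{2n-1}$ and of $x_{2n}$ via $x_{j_{2n-2}}$ and $x_{j_{2n-1}}$, and the containment follows.

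I expect no genuinely hard step. The one place requiring care is the bookkeeping in the inductive step: correctly locating, across the parity of $n$ and the two cases for $t_{n+1}$, which entries of $L_{n-1}^{\widehat t}$ sit in the rows and columns that matter, and matching these up with the indices in Remark~\ref{subarray}. That remark is essentially tailor-made so that the neighborhood containment $N(c_\ell)\subseteq N(c')$ holds, which is exactly what makes adjoining the degree-$2$ vertex preserve chordal bipartiteness.
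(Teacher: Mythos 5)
Your proof is correct and follows essentially the same route as the paper's: induction on $n$, with the inductive step exploiting that $G_n^t$ is $G_{n-1}^{\widehat t}$ plus a single degree-two vertex, and the chord you produce (from the row of $x_{2n+1}$ or $x_{2n+2}$ to the column vertex of $x_{2n+3}$) is exactly the edge $x_{j_{2n}}$ or $x_{j_{2n+1}}$ that the paper exhibits. The only cosmetic difference is that you package the step as a general ``adjoin a degree-two vertex whose neighborhood is dominated'' lemma and verify $N(c_\ell)\subseteq N(c')$ via Remark~\ref{subarray} at $i=n-1$, whereas the paper argues directly on cycles through the new vertex using the $i=n$ subarray.
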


\begin{proof}
    We already know by Definition~\ref{graphtaudef} that every graph $G_n^t$ is bipartite for $n\geq 0$, with the bipartition above coming from the rows and columns of $L_n^t$.  The cardinalities of the vertex sets follow from Definition~\ref{construction}. We prove the chordal bipartite property by induction on $n$.  It is clear for $i=0$ and $i=1$ that $G_i^f$ is chordal bipartite for $f\in \F_2^{i+1}$, since these graphs have fewer than six vertices.  Now suppose $G_{i}^f$ is chordal bipartite for $i$ with $1\leq i<n\equiv 0 \mod 2 (\equiv 1 \mod 2)$, and consider $G_n^t$ for $t\in \F_2^{n+1}$.  
    We know that the following array (or its transpose) is a subarray of $L_n^t$ by Remark~\ref{subarray}, and we include for reference the corresponding subgraph of $G_n^t$ with vertices labeled by row and column.
    \[
    \begin{bmatrix}
        x_{j_{2n}} & x_{2n+1} \\
        x_{j_{2n+1}} & x_{2n+2}\\
        x_{2n+3} & x_{2n+4}
    \end{bmatrix}
    \]
    
    \begin{center}

    \tikzset{main node/.style={circle,fill=blue!20,draw,minimum size=20pt,inner sep=0pt},
            }
    
    \begin{tikzpicture}
    
    \node[main node] (4) {$c1$};
    \node[main node] (1) [below left = 1.5cm and 1.5cm of 4] {$r1$};
    \node[main node] (2) [below right = 1.5cm and 1.5cm of 4] {$r2$};
    \node[main node] (5) [below right = 1.5cm and 1.5cm of 1] {$c2$};
    \node[main node] (3) [right = 1.5cm of 2] {$r3$};
    
    \path[draw,thick]
    (1) edge node [left] {$x_{j_{2n}}$}    (4)
    (1) edge node [left] {$x_{2n+1}$}    (5)
    (2) edge node [left] {$x_{j_{2n+1}}$}(4)
    (2) edge node [left] {$x_{2n+2}$}    (5)
    (3) edge node [right] {$x_{2n+3}$}     (4)
    (3) edge node [right] {$x_{2n+4}$}    (5)
    ;
\end{tikzpicture}

\end{center}
    We know the only difference between $G_n^t$ and $G_{n-1}^{\widehat{t}}$ is one vertex $r_3$ corresponding to row $\left \lfloor{n/2}\right \rfloor+2$ (column $\left \lceil{n/2}\right \rceil+2$) and two edges $x_{2n+3}=\{r_3,c_1\}$ and $x_{2n+4}=\{r_3,c_2\}$, where $c_1$ corresponds to the column (row) containing $x_{2n+3}$ and $c_2$ corresponds to column $\left \lceil{n/2}\right \rceil+2$ (row $\left \lfloor{n/2}\right \rfloor+2$).  By Remark~\ref{construction}, $\deg r_3=2$, since the only entries in row $\left \lfloor{n/2}\right \rfloor+2$ (column $\left \lceil{n/2}\right \rceil+2$) are $x_{2n+3}$ and $x_{2n+4}$.  Then any even cycle containing $r_3$ must also contain $x_{2n+3}$ and $x_{2n+4}$.  Similarly, by the same remark, the only other edges with endpoint $c_2$ are $x_{2n+1}$ and $x_{2n+2}$, the entries added to make $L_{n-1}^{\widehat{t}}$, so we know that any even cycle containing $x_{2n+4}$ and $x_{2n+3}$ must contain either $x_{2n+1}$ or $x_{2n+2}$.  We see that any even cycle containing $r_3$ and $x_{2n+1}$ is either a 4-cycle or has $x_{j_{2n}}$ as a chord, and any even cycle containing $r_3$ and $x_{2n+2}$ is either a 4-cycle or has $x_{j_{2n+1}}$ as a chord.  Thus every graph $G_n^t$ is chordal bipartite for $n\geq 0$, with the bipartition above.
\end{proof}

\begin{rmk}\label{distinct}

We note that the previous proposition ensures that our graphs are distinct from those studied in \cite{FKVT20}, \cite{Galet}, and \cite{RNRN}, which are not chordal bipartite except for the first family in \cite{FKVT20}, in which every four-cycle shares exactly one edge with every other four-cycle (also distinct from our family except for the trivial case with only one four-cycle, corresponding to $G_0^t$).  

\end{rmk}

\subsection{Toric Rings for \texorpdfstring{$\mathcal{F}$}{F}}\label{edgeringsforF}
\label{toricringsforF}

In this section, we develop the toric ring $R(n,t)$ for each of the chordal bipartite graphs $G_n^t$ in the family $\mathcal{F}$.  We first show that the toric ideal $I_{G_n^t}$ of the graph $G_n^t$ is the same as the ideal $I(n,t)$ generated by the distinguished minors of $L_n^t$.  We then demonstrate that for some $n$ and $t$, these ideals do not arise from the join-meet ideals of lattices in a natural way, so that results in lattice theory do not apply to the general family $\mathcal{F}$ in an obvious way.

We first define the toric ideal of a graph.  For any graph $G$ with vertex set $V$ and edge set $E$, there is a natural map $\pi:k[E]\to k[V]$ taking an edge to the product of its endpoints.  The polynomial subring in $k[V]$ generated by the images of the edges under the map $\pi$ is denoted $k[G]$ and is called the \textit{edge ring of $G$}.  The kernel of $\pi$ is denoted $I_G$ and is called the \textit{toric ideal of $G$}; the ring $k[G]$ is isomorphic to $k[E]/I_G$.  In this work, we consider the toric ring $k[E]/I_G$ and the toric ideal $I_G$ for our particular graphs.  It is known in general that $I_G$ is generated by binomial expressions coming from even closed walks in $G$ \cite[Prop 3.1]{V95} and that the toric ideal of a chordal bipartite graph is generated by quadratic binomials coming from the $4$-cycles of $G$ (see \cite[Th 1.2]{HO99}).

Let $S(n)=k[x_{0},x_{2},x_{3},\ldots,x_{2n+4}]$ be the polynomial ring in the edges of $G_n^t$. The edge ring for $G_n^t\in\mathcal{F}$ is denoted by $k[G_n^t]$ and is isomorphic to the toric ring 

\begin{equation*}
    R(n,t):=\frac{S(n)}{I_{G_n^t}},
\end{equation*}
where $I_{G_n^t}$ is the toric ideal of $G_n^t$ \cite[5.3]{herzog-hibi-ohsugi}.  
For the general construction of a toric ideal, we refer the reader to \cite[Ch 4]{Peeva2011}. Our goal is to show that the toric ideal $I_{G_n^t}$ of $G_n^t$ is equal to 
\begin{equation*}
    I(n,t)=(\{\text{distinguished minors of $L_n^t$}\}).
\end{equation*}

\begin{prop}\label{edgerings}

Let $S(n)=k[x_{0},x_{2},x_{3},\ldots,x_{2n+4}]$.  For $G_n^t\in\mathcal{F}$, we have

\begin{equation*}
    R(n,t)=\frac{S(n)}{I(n,t)},
\end{equation*}
where 
\begin{equation*}
    I(n,t)=(\{\text{distinguished minors of $L_n^t$}\}).
\end{equation*}
\end{prop}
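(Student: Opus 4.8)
The plan is to prove the two containments $I(n,t)\subseteq I_{G_n^t}$ and $I_{G_n^t}\subseteq I(n,t)$ separately. The first containment is the easy direction: every distinguished minor of $L_n^t$ comes from a $2\times 2$ subarray, say with rows indexed by $r,r'$ and columns by $c,c'$, and therefore equals $x_{\{r,c'\}}x_{\{r',c\}}-x_{\{r,c\}}x_{\{r',c'\}}$ up to sign. Under the map $\pi$ of the edge ring this binomial maps to $(rc')(r'c)-(rc)(r'c')=0$ in $k[V]$, since both monomials equal $rr'cc'$. Hence each distinguished minor lies in $I_{G_n^t}=\ker\pi$, giving $I(n,t)\subseteq I_{G_n^t}$.

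For the reverse containment I would invoke the structural result already cited in the excerpt, namely \cite[Th 1.2]{HO99}: since $G_n^t$ is chordal bipartite (Proposition~\ref{chordal}), the toric ideal $I_{G_n^t}$ is generated by the quadratic binomials associated to the $4$-cycles of $G_n^t$. So it suffices to show that for every $4$-cycle $C$ of $G_n^t$, the corresponding binomial $b_C$ lies in $I(n,t)$. A $4$-cycle in the bipartite graph $G_n^t$ consists of two row-vertices $r,r'$ and two column-vertices $c,c'$ together with all four edges joining them; in terms of the ladder-like structure $L_n^t$ this is exactly a choice of two rows and two columns such that all four intersection positions carry nonzero entries — i.e., a $2\times 2$ subarray of $L_n^t$ with no zero entry — and $b_C$ is precisely (up to sign) the distinguished minor attached to that subarray. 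Thus $b_C\in I(n,t)$ for every $4$-cycle $C$, and the generation statement gives $I_{G_n^t}\subseteq I(n,t)$. Combining the two containments yields $I_{G_n^t}=I(n,t)$ and hence $R(n,t)=S(n)/I(n,t)$.

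The main obstacle — really the only point requiring care — is the precise bijective correspondence between $4$-cycles of $G_n^t$ and full (no-zero-entry) $2\times 2$ subarrays of $L_n^t$, and the verification that under this correspondence the binomial $b_C$ coming from the even closed walk around $C$ matches the distinguished minor. One must be careful that a $4$-cycle uses four \emph{distinct} vertices and four \emph{distinct} edges, which forces the two chosen rows to be distinct and the two chosen columns to be distinct and all four entries nonzero; conversely any such $2\times 2$ subarray visibly produces a $4$-cycle. The sign discrepancy between $b_C$ and the minor is immaterial since ideals are closed under negation. No lattice-theoretic or Gröbner input is needed here: the identification is purely combinatorial, matching the edge-to-position dictionary of Definition~\ref{graphtaudef} against the definition of a distinguished minor, and the generation theorem for chordal bipartite toric ideals does the rest.
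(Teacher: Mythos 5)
Your proposal is correct and follows essentially the same route as the paper: the easy containment is checked by observing that each distinguished minor is a binomial of a $4$-cycle (hence in $\ker\pi$), and the reverse containment uses chordal bipartiteness (Proposition~\ref{chordal}) together with the fact that the toric ideal of a chordal bipartite graph is generated by the $4$-cycle binomials, which correspond bijectively to the full $2\times 2$ subarrays of $L_n^t$. The paper states this more tersely, citing \cite[Cor 5.15]{herzog-hibi-ohsugi} where you cite \cite[Th 1.2]{HO99}, but the argument is the same.
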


\begin{proof}
    To prove this, we need only show that $I(n,t)$ is the toric ideal $I_{G_n^t}$ of the graph $G_n^t$.  By Definition~\ref{graphtaudef}, it is clear that the distinguished minors of $L_n^t$ are in $I_{G_n^t}$, corresponding to the $4$-cycles of $G_n^t$.  Since $G$ is chordal bipartite by Proposition~\ref{chordal}, these are the only generators of $I_{G_n^t}$ \cite[Cor 5.15]{herzog-hibi-ohsugi}.
\end{proof}

\begin{cor}\label{CM}
    The rings $R(n,t)$ are normal Cohen-Macaulay rings.
\end{cor}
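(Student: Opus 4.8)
The plan is to obtain both conclusions from the single structural fact that $R(n,t)$ is the edge ring of a \emph{bipartite} graph, via two classical theorems. First I would record, using Proposition~\ref{edgerings}, that $R(n,t)\cong S(n)/I_{G_n^t}$ is isomorphic to the edge ring $k[G_n^t]$, i.e., to the affine semigroup ring $k[\mathbb{N}\mathcal{A}]$ with $\mathcal{A}=\{\,e_u+e_v : \{u,v\}\in E(G_n^t)\,\}\subset\mathbb{Z}^{V(G_n^t)}$. In particular $R(n,t)$ is a finitely generated affine semigroup ring over $k$ and an integral domain (it sits inside the polynomial ring $k[V(G_n^t)]$). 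By Definition~\ref{graphtaudef} the graph $G_n^t$ is finite, simple, and connected, and by Proposition~\ref{chordal} it is bipartite, so the relevant hypotheses are in place.

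The second step is normality: I would invoke the theorem of Simis, Vasconcelos, and Villarreal \cite{SVV94} (see also \cite{herzog-hibi-ohsugi}) that the edge ring of a bipartite graph is normal --- equivalently, that the semigroup $\mathbb{N}\mathcal{A}$ is saturated in the group it generates --- which gives normality of $R(n,t)$ immediately. The third step is Cohen--Macaulayness: a finitely generated normal affine semigroup ring over a field is Cohen--Macaulay by Hochster's theorem, so combining this with the previous step yields the Cohen--Macaulay property; alternatively one may cite \cite{herzog-hibi-ohsugi}, where normality and Cohen--Macaulayness of edge rings of bipartite graphs are recorded together (as already flagged in the introduction).

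I do not expect a genuine obstacle here, since both ingredients are standard and were announced in the introduction; the only point requiring care is checking that $G_n^t$ actually satisfies the hypotheses of the cited results --- finite, simple, connected, and bipartite --- all of which are supplied by Definition~\ref{graphtaudef} and Proposition~\ref{chordal}. If one wished to keep the argument self-contained within this paper, an alternative to the normality citation would be to defer the claim to Section~\ref{resultsprops}, where it is shown that the distinguished minors form a Gr\"obner basis with squarefree initial ideal (so that normality follows from a result of Sturmfels); but routing through the bipartite edge-ring theorems is shorter, and is the route I would take.
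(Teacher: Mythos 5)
Your proposal is correct and follows essentially the same route as the paper: the paper likewise observes that $R(n,t)$ is the toric ring of a finite simple connected bipartite graph and then cites \cite[Cor 5.26]{herzog-hibi-ohsugi} for Cohen--Macaulayness and \cite[Th 5.9, 7.1]{SVV94} for normality. Your extra step of deducing Cohen--Macaulayness from normality via Hochster's theorem is a harmless elaboration of the same citation-based argument.
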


\begin{proof}
    By Definition~\ref{graphtaudef} and Proposition~\ref{edgerings}, the ring $R(n,t)$ is the toric ring of a finite simple connected bipartite graph, and hence by Corollary 5.26 in \cite{herzog-hibi-ohsugi}, $R(n,t)$ is Cohen-Macaulay for each $n$ and $t$.  The fact that each $R(n,t)$ is normal follows from \cite[Th 5.9, 7.1]{SVV94}.
\end{proof}

Because we know the distinguished minors of $L_n^t$, we are now able to characterize the generators for the toric ideal $R(n,t)$ of $G_n^t$.  

\begin{rmk}\label{ti}
    By Proposition~\ref{det}, the generators $s_1,\ldots,s_{2n+1}$ for $I_{G_n^t}$ may be summarized as follows.  For integers $i$ such that $1\leq i \leq n$, set
    \begin{eqnarray*}
    s_{1}&=&x_{2}x_{3}-x_{j_{1}}x_{4}\\
    s_{2i}&=&x_{2i+1}x_{2i+3}-x_{j_{2i}}x_{2i+4}\\
    s_{2i+1}&=&x_{2i+2}x_{2i+3}-x_{j_{2i+1}}x_{2i+4},
    \end{eqnarray*}
    where the nonnegative integers $j_{k}$ are as in Remark~\ref{jk}, that is, $j_1=j_2=0$, $j_3=2$, and for $i\geq 2$, we have
    \begin{eqnarray*}
    t_{i+1}&=&0\iff j_{2i}=j_{2i-2}\iff j_{2i+1}=2i-1\\
    t_{i+1}&=&1\iff j_{2i}=j_{2i-1}\iff j_{2i+1}=2i.
    \end{eqnarray*}
    We note that the number of generators depends on $n$ and that the $j_k$ depend on $t$, but we may ignore dependence on $t$ when working with general $j_k$.
    We sometimes call $s_1,\ldots,s_{2n+1}$ the \textit{standard generators} of $I_{G_n^t}$, and show in Section~\ref{distinction} that for certain $n$ and $t$, they are not equal to the usual generators for the join-meet ideal of any lattice $D$.
\end{rmk}

\begin{exmp}
    We consider the toric ideal of a graph in $\mathcal{F}_1$.  For $n=5$ and $t=(1,1,\ldots,1)$, by Remark~\ref{jk} we have $j_1=j_2=0$, $j_3=2$, $j_{2i}=j_{2i-1}$ and $j_{2i+1}=2i$ for $i\geq 2$, so that
    \[
    R(5,(1,1,\ldots,1))=\frac{k[x_0,x_2,\ldots,x_{14}]}{I_{G_5^{(1,1,\ldots,1)}}},
    \]
    where $I_{G_5^{(1,1,\ldots,1)}}$ is generated by the distinguished minors of $L_5^{(1,1,\ldots,1)}$:
    \begin{align*}
    s_1 &= x_2x_3-x_0x_4 & s_2 &= x_3x_5-x_0x_6\\
    s_3 &= x_4x_5-x_2x_6 & s_4 &= x_5x_7-x_2x_8\\
    s_5 &= x_6x_7-x_4x_8 & s_6 &= x_7x_9-x_4x_{10}\\
    s_7 &= x_8x_9-x_6x_{10} & s_8 &= x_9x_{11}-x_6x_{12}\\
    s_9 &= x_{10}x_{11}-x_8x_{12} & s_{10} &=x_{11}x_{13}-x_8x_{14}\\
    s_{11} &= x_{12}x_{13}-x_{10}x_{14}.
    \end{align*}

\end{exmp}

\subsection{Distinction From Join-Meet Ideals of Lattices}\label{distinction}

We saw in Example~\ref{mexmp2} and Proposition~\ref{edgerings} that if $G_n^t\in\mathcal{F}_1\subset\mathcal{F}$, then $I_{G_n^t}$ is a ladder determinantal ideal for $n\geq 2$.  It is known that a ladder determinantal ideal is equal to the join-meet ideal of a (distributive) lattice (indeed, with a natural partial ordering which decreases along rows and columns of $L_n^t$ we obtain such a lattice).  Some algebraic information such as regularity and projective dimension may be easily derived for some join-meet ideals of distributive lattices (see, for example, Chapter 6 of \cite{herzog-hibi-ohsugi}). We spend some time in this section establishing that not all rings $R(n,t)\in \mathcal{F}$ arise from a lattice in a natural way (see Remark~\ref{natural}), and so there does not seem to be any obvious way to obtain our results in Section~\ref{resultsprops} from the literature on join-meet ideals of distributive lattices or on ladder determinantal ideals.  The results in Section~\ref{resultsprops} may be viewed as an extension of what may already be derived for the family $\mathcal{F}_1$ from the existing literature.

The following five lemmas serve to provide machinery to show that there is at least one ring in the family $\mathcal{F}$, namely $R(5,(1,1,1,1,1,0))$, whose toric ideal does not come from a lattice on the set $\{x_0,x_2,\ldots,x_{14}\}$ in any obvious way.  That is, we show that the standard generators of $I_{G_{5}^t}$, the $s_k$ from Remark~\ref{ti}, are not equal to the standard generators (see Definition~\ref{standard}) for any lattice $D$ on $\{x_0,x_2,\ldots,x_{14}\}$.  

Before we begin, we introduce some definitions and notation that we use extensively throughout.

\begin{defn}\label{standard}
    The join-meet ideal of a lattice is defined from the join (least upper bound) $x\vee y$ and meet (greatest lower bound) $x\wedge y$ of each pair of incomparable elements $x,y\in L$.  In this work, a \textit{standard generator} of the join-meet ideal of a lattice $D$ is a nonzero element of one of the following four forms: 
    \begin{eqnarray*}
        x_ax_b-(x_a\vee x_b)(x_a\wedge x_b) &=& x_ax_b-(x_a\wedge x_b)(x_a\vee x_b)\\
        (x_a\vee x_b)(x_a\wedge x_b)-x_ax_b &=&
        (x_a\wedge x_b)(x_a\vee x_b)-x_ax_b
    \end{eqnarray*}
    for $x_a,x_b\in L$.  We sometimes refer to such an element as a \textit{standard generator of $D$} (the join-meet ideal is defined by analogous generators in the literature, though sometimes $a,b\in L$ instead of $x_a$ and $x_b$). We note that for a standard generator, the pair $\{x_a,x_b\}$ is an incomparable pair, and the pair $\{(x_a\vee x_b),(x_a\wedge x_b)\}$ is a comparable pair.  
\end{defn}

Though we are in a commutative ring, we provide all possible orderings for factors within the terms of a standard generator to emphasize that either factor of the monomial
\[
(x_a\vee x_b)(x_a\wedge x_b)=(x_a\wedge x_b)(x_a\vee x_b)
\]
may be the join or the meet of $x_a$ and $x_b$.

\begin{rmk}\label{natural}
   We give an explanation for why it makes sense to focus only on the standard generators of a join-meet ideal.  We recall that the standard generators $s_k$ for $I_{G_n^t}$  from Remark~\ref{ti} come from distinct $2\times 2$ arrays within the ladder-like structure $L_n^t$ and recognize that either monomial of $s_k$ determines its $2 \times 2$ array.  Then an element of the form $ab-cd$ in  $I_{G_{5}^t}$ with $a,b,c,d\in \{x_0,x_2,x_3,\ldots,x_{14}\}$ must be equal to $\pm s_i$ for some $i$, since a nontrivial sum of $s_k$ with coefficients in $\{-1,1\}$ either has more than two terms or is equal to $s_i$ for some $i$, and other coefficients would be extraneous.  Then any generating set for $I_{G_{5}^t}$ where each element has the form $ab-cd$ in  $I_{G_{5}^t}$ with $a,b,c,d\in \{x_0,x_2,x_3,\ldots,x_{14}\}$ must consist of all the $s_k$ (up to sign).  We conclude that it is natural to check whether the $s_k$ are standard generators of a lattice $D$, instead of non-standard generators. 
\end{rmk}

\begin{defn}\label{F}
    Given a standard generator $s=uz-wv$ of a lattice $D$, where $u,v,w,z\in D$, let $F_s\in \F_2$ be defined as follows: 
    
    \begin{itemize}
        \item If $F_s=0$, the elements in the first (positive) monomial of $s$ are not comparable in $D$ (so the elements in the second (negative) monomial of $s$ are comparable in $D$). 
        
        \item If $F_s=1$, the elements in the negative monomial of $s$ are not comparable in $D$ (so the elements in the positive monomial of $s$ are comparable in $D$).
    \end{itemize}
    For a given list $s_1,s_2,\ldots,s_m$ of standard generators of a lattice $D$, we use 
    \begin{equation*}
        F=(F_1,\ldots,F_{m})\in\F_2^{m},
    \end{equation*}
    where $F_j=F_{s_j}$, to encode the comparability of the variables in these generators.
\end{defn}
\noindent We note that exactly one of $F_j=0$ or $F_j=1$ happens for each $j$; we are merely encoding which monomial in $s_j$ corresponds to $x_ax_b$, and which to $(x_a\vee x_b)(x_a\wedge x_b)=(x_a\wedge x_b)(x_a\vee x_b)$.  

\begin{notation}
    We use the notation $u>\{w,v\}$ if $u>w$ and $u>v$ in a lattice $D$, and $\{w,v\}>z$ if $w>z$ and $v>z$ in $D$.
\end{notation}

In the first lemma, we begin by showing what restrictions we must have on a lattice whose join-meet ideal contains the 2-minors of the following array as standard generators:

\vspace{12pt}

\begin{center}
$\begin{matrix}
a & b & e\\
c & d & f
\end{matrix}$
\end{center}

\vspace{12pt}

\begin{lemma}\label{3rel}
    Suppose 
    \begin{eqnarray*}
    s_1&=&bc-ad\\
    s_2&=&ce-af\\
    s_3&=&de-bf
    \end{eqnarray*}
    are standard generators of a lattice $D$. Let $F\in\F_2^3$ be defined for these three elements as in Definition~\ref{F}.  Then up to relabeling of variables,
    \begin{equation*}
        F\in\{\{0,0,0\},\{0,0,1\},\{0,1,1\}\}.
    \end{equation*}
\end{lemma}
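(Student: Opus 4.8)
\subsection*{Proof proposal}

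The plan is to turn each hypothesis ``$s_i$ is a standard generator of $D$'' into a short list of strict order relations among $a,b,c,d,e,f$, and then show that the comparability pattern recorded by $F=(0,1,0)$ --- which, up to relabeling, is the only value outside the claimed list --- contradicts the poset axioms. First I would record the local picture behind a single standard generator. If $s=uz-wv$ is a standard generator, then by Definitions~\ref{standard} and~\ref{F} exactly one monomial of $s$, the one selected by $F_s$, is an incomparable pair $\{p,q\}$, and the other monomial equals $\{\,p\vee q,\ p\wedge q\,\}$; calling the larger of these two comparable elements the ``top'' and the smaller the ``bottom'', we obtain the strict relations $\text{top}>p>\text{bottom}$ and $\text{top}>q>\text{bottom}$ with $p,q$ incomparable. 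Thus each of $s_1,s_2,s_3$ contributes a four-element ``diamond'' whose top/bottom pair of variables is prescribed by $F$ but which carries one extra binary choice: which of the two comparable variables is the top.

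Next I would cut the $2^3$ possibilities for $F$ roughly in half. Swapping the two rows of the array in the statement --- the relabeling $a\leftrightarrow c$, $b\leftrightarrow d$, $e\leftrightarrow f$ --- carries $D$ to an isomorphic lattice and replaces each $s_i$ by $-s_i$, hence replaces $F$ by its componentwise complement; since this is a legitimate relabeling, up to relabeling we may assume $F_1=0$. That leaves the four candidates $(0,0,0)$, $(0,0,1)$, $(0,1,0)$, $(0,1,1)$, and as the first, second, and fourth are exactly those allowed in the statement, it remains only to rule out $F=(0,1,0)$.

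For $F=(0,1,0)$ I would unwind the three diamonds: $F_1=0$ gives $\{a,d\}=\{b\vee c,\ b\wedge c\}$ with $b,c$ incomparable; $F_2=1$ gives $\{c,e\}=\{a\vee f,\ a\wedge f\}$ with $a,f$ incomparable; $F_3=0$ gives $\{b,f\}=\{d\vee e,\ d\wedge e\}$ with $d,e$ incomparable. Then I would split on the orientation of the first diamond. If $a=b\vee c$ (so $a>b$, $a>c$, $b>d$, $c>d$), then the orientation $c=a\vee f$ of the second diamond contradicts $a>c$, so $e=a\vee f$, forcing $e>a>b$ and $e>f$; but then the orientation $b=d\vee e$ of the third diamond requires $b>e$ and the orientation $f=d\vee e$ requires $f>e$, each impossible. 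The case $d=b\vee c$ is the mirror image: it forces $c=a\vee f$, hence $d>c>f$, and again both orientations of the third diamond fail. In every branch one reaches relations $x>y$ and $y>x$ for some pair of variables, which cannot both hold in a poset, let alone in the lattice $D$; hence $F\neq(0,1,0)$, and the lemma follows.

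The only real difficulty I anticipate is organizational: stacked on each comparability pattern there are $2^3$ orientation choices, so one must arrange the bookkeeping so that each branch closes after chasing only two or three cover relations. The symmetry reduction to $F_1=0$, together with the observation that fixing the orientation of the $s_1$-diamond immediately pins down the orientation of the $s_2$-diamond (the other choice contradicts a relation already forced), is what keeps the number of genuinely distinct sub-cases down to just two.
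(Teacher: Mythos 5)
Your proposal is correct and follows essentially the same route as the paper: reduce to $F_1=0$ via the row-swap relabeling $(ac)(bd)(ef)$, which complements $F$, and then eliminate $F=(0,1,0)$ by chasing the order relations forced by the three standard generators. The only cosmetic difference is that the paper dispatches the orientation of the first ``diamond'' by the additional symmetry of reversing the lattice order (which leaves the join-meet ideal unchanged), whereas you treat the two mirror-image orientations explicitly.
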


\begin{proof}
    We first note that some of the cases we consider are equivalent.  If we relabel variables according to the permutation $(ac)(bd)(ef)$, we see that 
    \begin{equation*}
        F=\{i,j,k\}\equiv\{1-i,1-j,1-k\}.
    \end{equation*}  
    This limits the cases we need to consider to
    \begin{equation*}
        F\in\{\{0,0,0\},\{0,0,1\},\{0,1,0\},\{0,1,1\}\}.
    \end{equation*}
    That is, we only need to show that the case $F=\{0,1,0\}$ is impossible.
    
    Let $F_1=0$.  Then without loss of generality, up to reversing the order in the lattice (which does not affect the join-meet ideal), we have $a>\{b,c\}>d$.  If $F_2=1$,  we have $e>\{a,f\}>c$, so $e>\{b,f\}>d$ and hence $F_3=1$.  We conclude that the case $F=\{0,1,0\}$ is impossible.
\end{proof}

In the second lemma, we show what restrictions we must have on a lattice whose join-meet ideal contains the 2-minors of the ladder 

\vspace{12pt}

\begin{center}
$\begin{matrix}
a & b & e\\
c & d & f\\
 & g & h\\
\end{matrix}$
\end{center}

\noindent as standard generators, and which meets certain comparability conditions.

\vspace{12pt}

\begin{lemma}\label{5rel}
    Suppose 
    \begin{eqnarray*}
    s_1&=&bc-ad\\
    s_2&=&ce-af\\
    s_3&=&de-bf\\
    s_4&=&eg-bh\\
    s_5&=&fg-dh
    \end{eqnarray*}
    are standard generators of a lattice $D$, and that $\{a,g\}$,$\{a,h\}$,$\{c,g\}$, and $\{c,h\}$ are comparable pairs in $D$. Let $F\in \F_2^{5}$ be defined for these five elements as in~\ref{F}.  Then up to relabeling of variables, $F=\{0,0,0,0,0\}$.
\end{lemma}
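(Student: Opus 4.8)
The plan is to reduce, by relabeling, to the case $F_1=0$, then invoke Lemma~\ref{3rel} on $s_1,s_2,s_3$ to split into three possibilities for $(F_1,F_2,F_3)$, and in each to read off the forced order relations among $a,\dots,f$ and use the four comparability hypotheses together with the standard-generator conditions on $s_4$ and $s_5$ to pin down $(F_4,F_5)$.

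First I would record two relabelings. The permutation $(ac)(bd)(ef)$ sends the list $(s_1,s_2,s_3,s_4,s_5)$ to $(-s_1,-s_2,-s_3,s_5,s_4)$ and carries $\{\{a,g\},\{a,h\},\{c,g\},\{c,h\}\}$ to itself, so it is an admissible relabeling; its effect on $F$ is $(F_1,F_2,F_3,F_4,F_5)\mapsto(1-F_1,1-F_2,1-F_3,F_5,F_4)$, so we may assume $F_1=0$, and (reversing the order of $D$ if necessary, which changes neither $F$ nor any comparability) that $a=b\vee c$, $d=b\wedge c$. The permutation $(be)(df)(gh)$ likewise sends $(s_1,\dots,s_5)$ to $(s_2,s_1,-s_3,-s_4,-s_5)$, preserves the set of four hypothesized pairs, and has effect $(F_1,\dots,F_5)\mapsto(F_2,F_1,1-F_3,1-F_4,1-F_5)$ on $F$; in particular it carries $(0,0,1,1,1)$ to $(0,0,0,0,0)$, so it will suffice to show that $F\in\{(0,0,0,0,0),(0,0,1,1,1)\}$.

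Since $F_1=0$, Lemma~\ref{3rel} gives $(F_1,F_2,F_3)\in\{(0,0,0),(0,0,1),(0,1,1)\}$, and from the definition of the $F_i$ together with $a>c$ the restriction of $D$ to $\{a,\dots,f\}$ is forced in each case: $a>b>\{d,e\}>f$, $a>c>d>f$ with $b=d\vee e$, $f=c\wedge e$ in the first; $a>e>b>d$, $a>c>f>d$ with $e=b\vee f$, $f=c\wedge e$ in the second; $e>a>b>d$, $e>f>c>d$ with $e=a\vee f$, $c=a\wedge f$ and $\{a,f\}$ incomparable in the third. In the first case I would show $F_5=1$ is impossible: it forces $g=d\vee h$ and $f=d\wedge h$, and then either $\{c,g\}$ comparable (with $g\ge d$) pushes $g$ up to a common lower bound of $b,c$ strictly above $d$, contradicting $b\wedge c=d$, or — after the analysis of $s_4$ — $\{a,h\}$ comparable forces $a$ to be an upper bound of $d$ and $h$, contradicting $g=d\vee h$; hence $F_5=0$, which gives $h=f\wedge g\le f<b$, so $\{b,h\}$ is comparable and $F_4=0$. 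In the third case I would show it cannot occur: whichever value $F_5$ takes, $h$ is forced comparable to $a$, and $\{a,f\}$ incomparable then pins $h$ on one side of $a$; the ensuing $s_4$-relation forces $b\vee h\le a$ or $a\le e\wedge g$, contradicting $e>a$ or $b<a$. In the second case I would show $F_5=0$ is impossible (it forces $d=f\wedge g$, $h=f\vee g$, $g\ge d$; then $\{c,g\}$ comparable forces $g>c$, so $g\ge b\vee c=a>e$, contradicting the incomparability in $s_4$, or else $b\wedge c\ge g>d$), so $F_5=1$, giving $g=d\wedge h\le d<e$, hence $\{e,g\}$ comparable and $F_4=1$. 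Thus the only options are $F=(0,0,0,0,0)$ and $F=(0,0,1,1,1)$, as required.

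The step I expect to be the main obstacle is organizing this case analysis cleanly: three cases for $(F_1,F_2,F_3)$, then two sub-cases for $F_5$ and sometimes two for $F_4$, with the contradiction in each branch coming from the interplay of the forced chains among $a,\dots,f$, the standard-generator requirement on $s_4$ and $s_5$ (exactly one incomparable monomial, whose join and meet form the other), and the four hypothesized comparabilities. The mechanism is the same throughout — an element $g$ or $h$ is comparable to $c$ or $a$, is already known to lie above or below some fixed element, and is thereby forced above or below too much, contradicting a prescribed incomparability or the meet $b\wedge c=d$ — so once set up carefully the branches go through uniformly; the most error-prone part is tracking which monomial of $s_4$ and $s_5$ is the comparable one in each branch.
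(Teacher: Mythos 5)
Your proposal is correct and takes essentially the same route as the paper: the same two relabeling symmetries $(ac)(bd)(ef)$ and $(be)(df)(gh)$, reduction of $(F_1,F_2,F_3)$ via Lemma~\ref{3rel}, and elimination of the surviving cases using the hypothesized comparable pairs $\{a,g\},\{a,h\},\{c,g\},\{c,h\}$ against the standard-generator structure of $s_4$ and $s_5$ (the paper only streamlines the bookkeeping by also applying Lemma~\ref{3rel} to $s_3,s_4,s_5$ and invoking a third symmetry $(ah)(cg)(bf)$ to reduce to four equivalence classes before killing three). One caution for the write-up: a couple of your sketched contradictions need the $F_4$-subcase data to close as literally stated --- e.g.\ in your first case with $F_5=1$ and $F_4=1$, the inequality $a\geq d\vee h=g$ is only contradictory after $\{c,g\}$ comparability (together with $g=b\vee h>b$) forces $g>a$, and similarly in your second case the step ``$g>c$, so $g\geq b\vee c$'' needs $g>b$, which comes from the $F_4=0$ analysis --- but these are exactly the verifications the paper's proof also carries out or leaves to the reader, and each branch does go through by the mechanism you describe.
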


\begin{proof}
     We first note that with natural relabeling, both $\{s_1,s_2,s_3\}$ and $\{s_3,s_4,s_5\}$ satisfy the hypotheses of Lemma~\ref{3rel}, so if we let $F$ be defined as in Definition~\ref{F}, this limits the cases we need to consider to 5-tuples whose first three elements and whose last three elements satisfy the conclusion of Lemma~\ref{3rel}. We note that some of the cases we consider are equivalent.  If we relabel variables according to the permutation $(ac)(bd)(ef)$, we see that $F=\{i,j,k,l,m\}\equiv\{1-i,1-j,1-k,m,l\}$, and if we relabel the variables according to the permutation $(be)(df)(gh)$, we have $F=\{i,j,k,l,m\}\equiv\{j,i,1-k,1-l,1-m\}$.  The permutation $(ah)(cg)(bf)$ yields $F=\{i,j,k,l,m\}\equiv\{m,l,k,j,i\}$.  Then by Lemma~\ref{3rel} we have the eighteen cases 
     
     \pagebreak
     
     \begin{equation*}
         \{0,0,0,0,0\}\equiv \{1,1,1,0,0\}\equiv \{1,1,0,1,1\}\equiv \{0,0,1,1,1\}
     \end{equation*}
     \begin{eqnarray*}
         \{0,0,0,0,1\}\equiv \{1,1,1,1,0\}\equiv \{1,1,0,0,1\}\equiv \{0,0,1,1,0\}&\equiv& \{0,1,1,0,0\}\\
         &\equiv& \{1,0,0,0,0\}\\
         &\equiv& \{0,1,1,1,1\}\\
         &\equiv& \{1,0,0,1,1\}
     \end{eqnarray*}
     \begin{equation*}
         \{0,0,0,1,1\}\equiv \{1,1,1,1,1\}\equiv \{1,1,0,0,0\}\equiv \{0,0,1,0,0\}
     \end{equation*}
     \begin{equation*}
         \{0,1,1,1,0\}\equiv \{1,0,0,0,1\}
     \end{equation*}
     
    \vspace{12pt}
     
    \noindent Case 1: $F=\{0,0,0,0,1\}$.  Since $F_1=0$, without loss of generality (reversing the order on the entire lattice if needed) we have $a>\{b,c\}>d$.  Then $F_2=F_3=F_4=0$ and $F_5=1$, with the ordering chosen, yield
     \begin{align*}
        &a>\{c,e\}>f\\
        &b>\{d,e\}>f\\
        &b>\{e,g\}>h\\
        &g>\{d,h\}>f.
     \end{align*}
     If $c>g$, then $a>\{b,c\}>g>d$, but then $bc-ad$ is not a standard generator of $D$, and this is a contradiction.  If $c<g$, then $c<g<b$ so that both $\{b,c\}$ and $\{a,d\}$ from $s_1$ are comparable pairs, but this is a contradiction.
     We conclude that the case $\{0,0,0,0,1\}$ is impossible.
     
    \vspace{12pt}
    
    Because of the comparability of the pair $\{c,g\}$, the case $F=\{0,1,1,1,0\}$ forces comparability of $\{f,g\}$ or $\{b,c\}$ and hence yields a contradiction, as the reader may verify.  In the case $F=\{0,0,0,1,1\}$, the comparability of $\{c,g\}$ forces either the comparability of $\{b,c\}$ (a contradiction), or $d<\{b,c\}<a<g$, up to reversing the order in the lattice, since $s_1$ is a standard generator.  Since $\{a,h\}$ is a comparable pair, it immediately follows that either $\{b,h\}$ is comparable (a contradiction) or $e<\{b,h\}<a<g$, which is a contradiction since $s_4$ is a standard generator, as the reader may verify.
    These cases are compatible with the given relabelings and thus conclude our proof.  
\end{proof}

In the third lemma, we show what restrictions we must have on a lattice whose join-meet ideal contains the 2-minors of the ladder 

\vspace{12pt}

\begin{center}
$\begin{matrix}
a & b & e & \\
c & d & f & i\\
 & g & h & j
\end{matrix}$
\end{center}

\noindent as standard generators and which meets certain comparability conditions.

\vspace{12pt}

\begin{lemma}\label{7rel}
    Suppose 
    \begin{eqnarray*}
    s_1&=&bc-ad\\
    s_2&=&ce-af\\
    s_3&=&de-bf\\
    s_4&=&eg-bh\\
    s_5&=&fg-dh\\
    s_6&=&gi-dj\\
    s_7&=&hi-fj
    \end{eqnarray*}
    are standard generators of a lattice $D$, and that $\{a,g\}$, $\{a,h\}$, $\{c,g\}$, $\{c,h\}$, $\{b,i\}$, $\{b,j\}$, $\{e,i\}$, and $\{e,j\}$ are comparable pairs in $D$. Let $F\in \F_2^{7}$ be defined for these seven elements as in Definition~\ref{F}.  Then up to relabeling of variables, $F=\{0,0,0,0,0,0,0\}$.
\end{lemma}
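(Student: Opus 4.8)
The plan is to obtain the conclusion from two applications of Lemma~\ref{5rel} to overlapping sub-ladders of the displayed $3\times 4$ array, followed by a short case analysis and one explicit relabeling.

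First I would apply Lemma~\ref{5rel} directly to $s_1,s_2,s_3,s_4,s_5$, which are literally the five generators of that lemma and for which the required comparable pairs $\{a,g\},\{a,h\},\{c,g\},\{c,h\}$ are among our hypotheses. Recording not just its statement but the orbit of $(0,0,0,0,0)$ exhibited in its proof, this gives
\[
(F_1,F_2,F_3,F_4,F_5)\in\bigl\{(0,0,0,0,0),\,(1,1,1,0,0),\,(1,1,0,1,1),\,(0,0,1,1,1)\bigr\}.
\]
Next I would apply Lemma~\ref{5rel} to $s_3,s_4,s_5,s_6,s_7$, the $2$-minors of the sub-ladder on columns $2,3,4$. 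A $180^\circ$ rotation of that sub-ladder turns it into the template array of Lemma~\ref{5rel} under the identification $(a',b',e',c',d',f',g',h')=(j,h,g,i,f,d,e,b)$, whereupon the minors $(s_1',s_2',s_3',s_4',s_5')$ of the template become $(s_7,s_6,s_5,s_4,s_3)$ with positive monomials matching, and the four comparable pairs $\{a',g'\},\{a',h'\},\{c',g'\},\{c',h'\}$ demanded by Lemma~\ref{5rel} become $\{e,j\},\{b,j\},\{e,i\},\{b,i\}$, which are exactly our remaining four comparability hypotheses. Hence Lemma~\ref{5rel} also yields
\[
(F_7,F_6,F_5,F_4,F_3)\in\bigl\{(0,0,0,0,0),\,(1,1,1,0,0),\,(1,1,0,1,1),\,(0,0,1,1,1)\bigr\}.
\]

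Intersecting these two constraints on the shared coordinates $F_3,F_4,F_5$ forces $(F_3,F_4,F_5)\in\{(0,0,0),(1,1,1)\}$, and in either subcase each displayed list pins the remaining coordinates down uniquely: we get either $F=(0,0,0,0,0,0,0)$, which is the assertion, or $F=(0,0,1,1,1,0,0)$. To eliminate the latter I would relabel the variables by the permutation $(be)(df)(gh)$, leaving $i$ and $j$ fixed. One checks this permutation sends the generating set $\{s_1,\dots,s_7\}$ to itself up to sign (interchanging $s_1\leftrightarrow s_2$ and $s_6\leftrightarrow s_7$, negating $s_3,s_4,s_5$), and sends the full set of eight comparability hypotheses to itself, so it is an admissible relabeling; and a direct check of which monomials it swaps shows it acts on the $F$-vector by $(F_1,\dots,F_7)\mapsto(F_2,F_1,1-F_3,1-F_4,1-F_5,F_7,F_6)$, which carries $(0,0,1,1,1,0,0)$ to $(0,0,0,0,0,0,0)$. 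Thus up to relabeling $F=(0,0,0,0,0,0,0)$.

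I expect the bookkeeping in the second application to be the main obstacle: one must line up the rotated sub-ladder with the template of Lemma~\ref{5rel} carefully enough that its conclusion transfers verbatim, keeping track of which monomial of each $s_k'$ is declared positive, so that the orbit of $(0,0,0,0,0)$ is inherited by the correct arrangement $(F_7,F_6,F_5,F_4,F_3)$ of the shared indices and not some other permutation. A secondary subtlety is to confirm that the closing relabeling $(be)(df)(gh)$ preserves the entire configuration of the present lemma — which is precisely why it must fix $i$ and $j$ — rather than only the five-minor sub-configuration handled by Lemma~\ref{5rel}.
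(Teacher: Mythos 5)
Your proposal is correct and follows essentially the same route as the paper: two applications of Lemma~\ref{5rel} (to $\{s_1,\dots,s_5\}$ and, after a relabeling, to $\{s_3,\dots,s_7\}$), intersecting the resulting constraints to reduce to $F=(0,0,0,0,0,0,0)$ or $(0,0,1,1,1,0,0)$, and then identifying these two cases via the permutation $(be)(df)(gh)$. You merely make explicit the ``natural relabeling'' for the second application and the bookkeeping that the paper leaves implicit, and these details check out.
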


\begin{proof}
     We first note that with natural relabeling, both $\{s_1,s_2,s_3,s_4,s_5\}$ and $\{s_3,s_4,s_5,s_6,s_7\}$ satisfy the hypotheses of Lemma~\ref{5rel}, so if we let $F$ be defined as in Definition~\ref{F}, this limits the cases we need to consider to 7-tuples whose first five elements and whose last five elements satisfy the conclusion of Lemma~\ref{5rel}.  The only possible cases are $\{0,0,0,0,0,0,0\}$ and  $\{0,0,1,1,1,0,0\}$.  If we relabel variables according to the permutation $(be)(df)(gh)$, we see that $F=\{i,j,k,l,m,n,o\}\equiv\{j,i,1-k,1-l,1-m,o,n\}$, so that these two cases are equivalent.
    Then up to relabeling of variables, $F=\{0,0,0,0,0,0,0\}$.
\end{proof}

In the fourth lemma, we show what restrictions we must have on a lattice whose join-meet ideal contains the 2-minors of the ladder

\vspace{12pt}

\begin{center}
$\begin{matrix}
a & b & e & \\
c & d & f & i\\
 & g & h & j\\
 & & k & l
\end{matrix}$
\end{center}

 \noindent as standard generators, and which meets certain comparability conditions.

\vspace{12pt}

\begin{lemma}\label{9rel}
    Suppose
    \begin{eqnarray*}
    s_1&=&bc-ad\\
    s_2&=&ce-af\\
    s_3&=&de-bf\\
    s_4&=&eg-bh\\
    s_5&=&fg-dh\\
    s_6&=&gi-dj\\
    s_7&=&hi-fj\\
    s_8&=&ik-fl\\
    s_9&=&jk-hl
    \end{eqnarray*}
    are standard generators of a lattice $D$, and that $\{a,g\}$, $\{a,h\}$, $\{c,g\}$, $\{c,h\}$, $\{b,i\}$, $\{b,j\}$, $\{e,i\}$, $\{e,j\}$, $\{d,k\}$, $\{d,l\}$, $\{g,k\}$, and $\{g,l\}$ are comparable pairs in $D$. Let $F\in \F_2^{9}$ be defined for these nine elements as in Definition~\ref{F}.  Then $F=\{0,0,0,0,0,0,0,0,0\}$.
\end{lemma}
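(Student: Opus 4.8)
The plan is to deduce the result from two overlapping applications of Lemma~\ref{7rel}, in exact analogy with the way Lemma~\ref{7rel} was obtained from two overlapping applications of Lemma~\ref{5rel}. The nine generators fall into two blocks of seven, $\{s_1,\ldots,s_7\}$ and $\{s_3,\ldots,s_9\}$, each of which is an instance of the configuration in Lemma~\ref{7rel}. The block $\{s_1,\ldots,s_7\}$ is literally that configuration, and the eight comparability pairs it requires, $\{a,g\},\{a,h\},\{c,g\},\{c,h\},\{b,i\},\{b,j\},\{e,i\},\{e,j\}$, are among our hypotheses. For the block $\{s_3,\ldots,s_9\}$, note that these binomials involve only $b,d,e,f,g,h,i,j,k,l$, and that this set occupies the last three columns of our ladder; that three-column subladder is the transpose of the ladder of Lemma~\ref{7rel}, and since transposing a $2\times 2$ array does not change its $2$-minor, the seven binomials $s_3,\ldots,s_9$ are again an instance of Lemma~\ref{7rel} under the relabeling carrying $(a,b,c,d,e,f,g,h,i,j)$ to $(b,d,e,f,g,h,i,j,k,l)$, with $s_1,\ldots,s_7$ there matched to $s_3,\ldots,s_9$ here (positive monomials corresponding). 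Under this relabeling the eight comparability pairs required become $\{b,i\},\{b,j\},\{e,i\},\{e,j\},\{d,k\},\{d,l\},\{g,k\},\{g,l\}$ --- precisely why the four extra pairs in $k$ and $l$ were assumed.

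First I would apply Lemma~\ref{7rel} to each block, using the sharper fact established inside its proof: the only $F$-tuples compatible with the conclusion of Lemma~\ref{5rel} on the two constituent five-blocks are $\{0,0,0,0,0,0,0\}$ and $\{0,0,1,1,1,0,0\}$. Thus $(F_1,\ldots,F_7)\in\{(0,0,0,0,0,0,0),(0,0,1,1,1,0,0)\}$ and $(F_3,\ldots,F_9)\in\{(0,0,0,0,0,0,0),(0,0,1,1,1,0,0)\}$. Intersecting the two constraints on the overlap $(F_3,F_4,F_5,F_6,F_7)$: the first confines it to $\{(0,0,0,0,0),(1,1,1,0,0)\}$, the second to $\{(0,0,0,0,0),(0,0,1,1,1)\}$, so $(F_3,F_4,F_5,F_6,F_7)=(0,0,0,0,0)$. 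Now $(F_3,F_4,F_5)=(0,0,0)$ forces the first application into its all-zero case, giving $F_1=\cdots=F_7=0$, and $(F_5,F_6,F_7)=(0,0,0)$ forces the second into its all-zero case, giving $F_3=\cdots=F_9=0$. Hence $F=\{0,0,0,0,0,0,0,0,0\}$, with no relabeling ambiguity remaining.

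The main obstacle is bookkeeping rather than a new idea: one must set up the transposition-and-relabeling identifying $\{s_3,\ldots,s_9\}$ with a copy of Lemma~\ref{7rel} carefully enough to see that its comparability hypotheses are exactly the ones assumed here, and one must use the explicit two-case output of the proof of Lemma~\ref{7rel} (rather than its ``up to relabeling'' phrasing) so that the overlap genuinely eliminates the surviving nonzero pattern. An equivalent route replaces the second application by Lemma~\ref{5rel} on $\{s_5,\ldots,s_9\}$ (whose hypotheses again follow from the pairs in $k$ and $l$) and intersects the resulting constraints on $(F_5,F_6,F_7)$.
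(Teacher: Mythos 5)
Your argument is correct and is essentially the paper's proof: the paper likewise applies Lemma~\ref{7rel} to the two overlapping blocks $\{s_1,\ldots,s_7\}$ and $\{s_3,\ldots,s_9\}$ (the latter via the natural relabeling you describe) and concludes $F=\{0,\ldots,0\}$. Your write-up simply makes explicit what the paper leaves terse --- the verification of the relabeling and comparability hypotheses and the intersection of the two-case outputs on the overlap --- which is a faithful elaboration rather than a different route.
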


\begin{proof}
     We first note that with natural relabeling, both $\{s_1,s_2,s_3,s_4,s_5,s_6,s_7\}$ and \\$\{s_3,s_4,s_5,s_6,s_7,s_8,s_9\}$ satisfy the hypotheses of Lemma~\ref{7rel}, so if we let $F$ be defined as in Definition~\ref{F}, this limits the cases we need to consider to 9-tuples whose first seven entries and whose last seven entries satisfy the conclusion of Lemma~\ref{7rel}. We see by Lemma~\ref{7rel} that $F=\{0,0,0,0,0,0,0,0,0\}$.  
\end{proof}

We now have the machinery necessary to show that for $t=(1,1,1,1,1,0)$, $I_{G_{5}^t}$ does not come from a lattice.  In our proof, we use the previous four lemmas and the fact that $I_{G_{5}^t}$ is generated by the distinguished minors of $L_5^{(1,1,1,1,1,0)}$: 

\vspace{12pt}

\begin{center}
$\begin{matrix}
x_0 & x_2 & x_5 & & \\
x_3 & x_4 & x_6 & x_9 & x_{13}\\
 & x_7 & x_8 & x_{10} & \\
 & & x_{11} & x_{12} & x_{14}
\end{matrix}$
\end{center}

\vspace{12pt}

\begin{prop}\label{nolattice}
    Let $n=5$ and $t=(1,1,1,1,1,0)$.  Then the set of standard generators for $I_{G_{5}^t}$ is not equal to the complete set of standard generators (up to sign) of any (classical) lattice. 
\end{prop}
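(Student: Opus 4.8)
The goal is to derive a contradiction from the assumption that all eleven standard generators $s_1,\dots,s_{11}$ of $I_{G_5^{(1,1,1,1,1,0)}}$ arise as standard generators of a single lattice $D$ on $\{x_0,x_2,\dots,x_{14}\}$. The strategy is to locate inside $L_5^{(1,1,1,1,1,0)}$ a $2\times 2$-minor structure matching one of the preceding four lemmas, apply that lemma to pin down the comparability pattern $F$, and then exhibit one more generator that is incompatible with the forced orderings. First I would read off the explicit minors from the displayed ladder: with $t=(1,1,1,1,1,0)$ one gets (using Remark~\ref{jk}) $j_1=j_2=0$, $j_3=2$, then $j_4=j_3=2$, $j_5=4$, $j_6=j_5=4$, $j_7=6$, $j_8=j_7=6$, $j_9=8$, and finally $t_6=0$ forces $j_{10}=j_8=6$ and $j_{11}=9$. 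So the generators are $s_1=x_2x_3-x_0x_4$, $s_2=x_3x_5-x_0x_6$, $s_3=x_4x_5-x_2x_6$, $s_4=x_5x_7-x_2x_8$, $s_5=x_6x_7-x_4x_8$, $s_6=x_7x_9-x_4x_{10}$, $s_7=x_8x_9-x_6x_{10}$, $s_8=x_9x_{11}-x_6x_{12}$, $s_9=x_{10}x_{11}-x_8x_{12}$, $s_{10}=x_{11}x_{13}-x_6x_{14}$, and $s_{11}=x_{12}x_{13}-x_9x_{14}$.

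The key observation is that $s_1,\dots,s_9$ are exactly (after the relabeling $a=x_0$, $b=x_2$, $c=x_3$, $d=x_4$, $e=x_5$, $f=x_6$, $g=x_7$, $h=x_8$, $i=x_9$, $j=x_{10}$, $k=x_{11}$, $l=x_{12}$) the generators appearing in Lemma~\ref{9rel} — this is precisely the $\mathcal{F}_1$-type ladder-staircase of width two that lives in the top-left portion of $L_5^{(1,1,1,1,1,0)}$. To invoke Lemma~\ref{9rel} I must first verify its twelve comparability hypotheses, namely that $\{x_0,x_7\},\{x_0,x_8\},\{x_3,x_7\},\{x_3,x_8\},\{x_2,x_9\},\{x_2,x_{10}\},\{x_5,x_9\},\{x_5,x_{10}\},\{x_4,x_{11}\},\{x_4,x_{12}\},\{x_7,x_{11}\},\{x_7,x_{12}\}$ are comparable in $D$. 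The natural way to get these is to note that each such pair $\{x_a,x_c\}$ is "non-adjacent" in the ladder in the sense that $x_a$ and $x_c$ do not both appear in any single distinguished minor, and that repeated application of the chain-of-comparabilities argument already used in the lemmas (each generator being standard forces one of its two pairs comparable, and transitivity propagates comparabilities along the staircase) forces these distant pairs to be comparable; I would package this as a short preliminary observation. Once the hypotheses are checked, Lemma~\ref{9rel} gives $F_1=\cdots=F_9=0$, which — again up to reversing the order of $D$ — pins down a completely explicit system of inequalities: $x_0>\{x_2,x_3\}$, $x_2>\{x_4,x_5\}$ or the corresponding staircase relations, culminating in chains that place, say, $x_6$ and $x_9$ in a definite comparability relation relative to $x_{11},x_{12},x_{13},x_{14}$.

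The final step is to bring in $s_{10}=x_{11}x_{13}-x_6x_{14}$ and $s_{11}=x_{12}x_{13}-x_9x_{14}$, whose defining subarrays $\begin{bmatrix} x_6 & x_{11} \\ x_{13} & x_{14}\end{bmatrix}$ and $\begin{bmatrix} x_9 & x_{12} \\ x_{13} & x_{14}\end{bmatrix}$ reach "back across" the staircase to $x_6$ and $x_9$ — this is exactly the feature that distinguishes $t_6=0$ from the $\mathcal{F}_1$ case $t_6=1$. The $F=0$ pattern from Lemma~\ref{9rel} forces $x_6$ and $x_9$ into chains with $x_4$, $x_8$, $x_{10}$ etc., and I expect that these forced relations make it impossible for $\{x_6,x_{11}\}$ and $\{x_6,x_{14}\}$ (resp. $\{x_9,x_{12}\}$ and $\{x_9,x_{14}\}$) to have the comparability pattern required for $s_{10}$ (resp. $s_{11}$) to be a standard generator: one of the two monomials will end up with a comparable pair in a way that contradicts standardness, by the same "if $x_6>x_{12}$ then $s_9$ fails / if $x_6<x_{12}$ then $s_{10}$ fails" type of case split used in Lemma~\ref{5rel}. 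Concretely I would compare the chain containing $x_{12}$ with the element $x_6$: since $s_9=x_{10}x_{11}-x_8x_{12}$ is standard with $F_9=0$ we have $x_{10}>\{x_8,x_{11}\}>x_{12}$ (say), and the comparabilities forced among $x_6,x_8,x_{11}$ then squeeze $x_6$ into a position where either $\{x_{11},x_{13}\}$ or $\{x_6,x_{14}\}$ becomes comparable — the wrong one for $s_{10}$. The main obstacle, and where the real work lies, is the bookkeeping in this last step: tracking precisely which of the many forced inequalities collide, and handling the order-reversal ambiguity (the lemma only determines $F$ "up to reversing the order"), so that the contradiction is genuinely unavoidable rather than an artifact of one chosen orientation. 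Verifying the comparability hypotheses of Lemma~\ref{9rel} is the second most delicate point; everything else is routine propagation of the arguments already established in Lemmas~\ref{3rel}--\ref{9rel}.
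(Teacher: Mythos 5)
Your overall strategy coincides with the paper's: your computation of the $j_k$ and the resulting eleven generators is correct, the plan of feeding $s_1,\dots,s_9$ into Lemma~\ref{9rel} (with exactly the relabeling and the twelve comparable pairs you list) to force $F_1=\cdots=F_9=0$ is the paper's plan, and the contradiction does indeed come from the two ``backward-reaching'' generators created by $t_6=0$. However, there is a genuine gap in the step you yourself flag as delicate: the verification of the comparability hypotheses of Lemma~\ref{9rel}. Your proposed mechanism --- ``each generator being standard forces one of its two pairs comparable, and transitivity propagates comparabilities along the staircase'' --- is circular. The chains such as $x_0>\{x_2,x_3\}>x_4>\{x_6,x_7\}>x_8$ that would let transitivity reach a distant pair like $\{x_0,x_7\}$ only exist once the $F$-values are known, i.e.\ only \emph{after} Lemma~\ref{9rel} has been applied; before that, you do not even know which monomial of each $s_k$ is the comparable one, so no propagation can start. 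The correct (and much shorter) argument hinges on the word \emph{complete} in the statement: if $\{x_i,x_j\}$ were an incomparable pair of $D$, then $\pm\bigl(x_ix_j-(x_i\vee x_j)(x_i\wedge x_j)\bigr)$ would be a standard generator of $D$ and the monomial $x_ix_j$ would have to occur among the monomials of $s_1,\dots,s_{11}$. Hence every pair whose product is not a monomial of some $s_k$ --- in particular all twelve pairs you list, and also $\{x_{10},x_{13}\}$ and $\{x_{10},x_{14}\}$ --- is comparable, with no induction or transitivity needed.

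The second issue is that your endgame is only conjectured (``I expect that these forced relations make it impossible\ldots''). The paper closes with $s_{11}=x_{12}x_{13}-x_9x_{14}$ rather than $s_{10}$: after normalizing the order so that $x_0>\{x_2,x_3\}>x_4$, the vanishing of $F_3,F_5,F_7,F_9$ gives $x_9>x_{10}>x_{12}$, and the comparability of $\{x_{10},x_{13}\}$ and $\{x_{10},x_{14}\}$ then rules out both values of $F_{11}$ by the two-case squeeze of Case~1 of Lemma~\ref{5rel}. Your alternative of attacking $s_{10}$ through $x_6$ may well also work, but it cannot be run until you have the extra comparable pairs involving $x_{13}$ and $x_{14}$ in hand, and those come from the completeness assumption just described, not from Lemma~\ref{9rel}. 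Repairing the source of the comparabilities therefore fixes both loose ends at once.
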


\begin{proof}
     By Remark~\ref{ti} and choice of $n=5$ and $t=(1,1,1,1,1,0)$, the generators of $I_{G_{5}^t}$ are 
     \begin{align*}
        s_1  &=  x_2x_3-x_0x_4 & s_2  &=  x_3x_5-x_0x_6\\
        s_3 &= x_4x_5-x_2x_6 &s_4 &= x_5x_7-x_2x_8\\
        s_5 &= x_6x_7-x_4x_8 &s_6 &= x_7x_9-x_4x_{10}\\
        s_7 &= x_8x_9-x_6x_{10}  &s_8 &= x_9x_{11}-x_6x_{12}\\
        s_9 &= x_{10}x_{11}-x_8x_{12} &s_{10} &= x_{11}x_{13}-x_6x_{14}\\
        s_{11} &= x_{12}x_{13}-x_9x_{14} & &
     \end{align*}
     
    Suppose a lattice $D$ exists whose complete set of standard generators (up to sign) equals \{$s_1,\ldots,s_{11}$\}.  We note that if the monomial $x_ix_j$ does not appear in any of the $s_k$, then $\{x_i,x_j\}$ is a comparable pair, since otherwise $\pm(x_ix_j-(x_i\vee x_j)(x_i\wedge x_j))$ would be in the set of standard generators of $D$.  Thus the pairs 
    $\{x_0,x_7\}$, $\{x_0,x_8\}$, $\{x_3,x_7\}$, $\{x_3,x_8\}$, $\{x_2,x_9\}$, $\{x_2,x_{10}\}$, $\{x_5,x_9\}$, $\{x_5,x_{10}\}$, $\{x_4,x_{11}\}$, $\{x_4,x_{12}\}$, $\{x_7,x_{11}\}$, $\{x_7,x_{12}\}$, $\{x_{10},x_{13}\}$, and $\{x_{10},x_{14}\}$ are comparable pairs in $D$. Let $F\in \F_2^{11}$ be defined as in Definition~\ref{F}.  Then with natural relabeling of the first nine relations, this lattice satisfies the hypotheses of Lemma~\ref{9rel}, so the only cases we need to consider are $F=\{0,0,0,0,0,0,0,0,0,a,b\}$.  
    
    Since $F_1=0$, without loss of generality, we have $x_0>\{x_2,x_3\}>x_4$.  Then with the ordering chosen, the fact that $F_3=F_5=F_7=F_9=0$ yields
     \begin{align*}
        &x_2>\{x_4,x_5\}>x_6\\
        &x_4>\{x_6,x_7\}>x_8\\
        &x_6>\{x_8,x_9\}>x_{10}\\
        &x_8>\{x_{10},x_{11}\}>x_{12}.
     \end{align*}
     The reader may verify that $b=0$ and $b=1$ both yield contradictions based on inspecting the standard generator $s_{11}$ in light of the comparability of the pairs $\{x_{10},x_{13}\}$ and $\{x_{10},x_{14}\}$, respectively, using the same technique employed in Case 1 of the proof of Lemma~\ref{5rel}.
     
     We conclude that there is no lattice whose complete set of standard generators (up to sign) equals the set of standard generators of $I_{G_{5}^{(1,1,1,1,1,0)}}$.
\end{proof}

\section{Properties of the Family of Toric Rings} 
\label{resultsprops}

In Section~\ref{family}, we defined a family of toric rings, the rings $R(n,t)$ coming from the family $\mathcal{F}$, and we demonstrated some context for these rings in the area of graph theory.  Now we investigate some of the algebraic properties of each $R(n,t)$.  We develop proofs to establish dimension, regularity, and multiplicity.
\subsection{Dimension and System of Parameters}

We use the the degree reverse lexicographic monomial order with $x_0>x_2>x_3>\cdots$ throughout this section, and denote it by $>$. We show that the standard generators $s_k$ given in Remark~\ref{ti} are a Gr\"obner basis for $I_{G_n^t}$ with respect to $>$.

\begin{lemma}\label{Grobner}
If $s_1,\ldots,s_{2n+1}$ are as in Remark~\ref{ti}, then $B=\{s_1,\ldots,s_{2n+1}\}$ is a Gr{\"o}bner basis for $I_{G_n^t}$  with respect to $>$.
\end{lemma}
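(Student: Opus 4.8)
The plan is to use Buchberger's criterion: to show $B=\{s_1,\ldots,s_{2n+1}\}$ is a Gr\"obner basis, it suffices to verify that every $S$-polynomial $S(s_i,s_j)$ reduces to zero modulo $B$. First I would record the leading terms with respect to $>$. Since we use degree reverse lexicographic order with $x_0>x_2>x_3>\cdots$, and each $s_k$ has the shape $x_\alpha x_\beta - x_{j_k}x_{\gamma}$ where $j_k$ is a \emph{small} index and $\gamma$ is the \emph{largest} index appearing (namely $2k{+}3$ for $s_{2i}$... let me recalibrate: $s_{2i}=x_{2i+1}x_{2i+3}-x_{j_{2i}}x_{2i+4}$, so the two monomials are $x_{2i+1}x_{2i+3}$ and $x_{j_{2i}}x_{2i+4}$). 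One checks that $\init_>(s_k)$ is the monomial \emph{not} involving the largest-indexed variable $x_{2i+4}$: in revlex, between two monomials of the same degree, the one containing the smallest variable is smaller, so $x_{j_{2i}}x_{2i+4}$ is larger precisely when... I would carefully nail down that $\init_> s_1 = x_2x_3$, $\init_> s_{2i}=x_{2i+1}x_{2i+3}$, and $\init_> s_{2i+1}=x_{2i+2}x_{2i+3}$ — i.e. the leading term is always the monomial built from the two ``middle'' variables, not the one with the extreme small index $j_k$ and extreme large index paired together. This should follow because in revlex the monomial containing the globally smallest variable index present is the \emph{smaller} one, and $j_k$ is always strictly the smallest index occurring in $s_k$.

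Next I would organize the $S$-pair computation by how much the leading terms overlap. If $\gcd(\init_> s_i, \init_> s_j)=1$, the $S$-polynomial reduces to zero automatically (the standard ``coprime leading terms'' lemma), so only pairs whose leading monomials share a variable need attention. Given the structure of the indices — $\init_> s_{2i}=x_{2i+1}x_{2i+3}$ and $\init_> s_{2i+1}=x_{2i+2}x_{2i+3}$ both contain $x_{2i+3}$, and $\init_> s_{2i+2}, \init_> s_{2i+3}$ contain $x_{2i+5}$, while the ``$x_{2i+1}$'' or ``$x_{2i+2}$'' half can coincide with a variable appearing two steps earlier — the overlapping pairs are a controlled local list: consecutive generators $s_{2i}, s_{2i+1}$ sharing $x_{2i+3}$, and a bounded number of pairs linking level $i$ to level $i{\pm}1$ via the shared variable $x_{2i+1}$ or $x_{2i+2}$ or via the index $j_k$. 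For each such overlap I would write down $S(s_i,s_j)$ explicitly and exhibit a reduction path: typically $S(s_i,s_j)$ will be a difference of two monomials of the form $x_{j_i}x_\bullet x_\bullet - x_{j_j}x_\bullet x_\bullet$ that is divisible by $\init_> s_\ell$ for a nearby $\ell$, and a single or double reduction step lands at $0$. The recursive relations among the $j_k$ from Remark~\ref{jk} (namely $j_{2i}\in\{j_{2i-2},j_{2i-1}\}$ and $j_{2i+1}\in\{2i{-}1,2i\}$) are exactly what guarantees the reducing generator is available; I would case on $t_{i+1}=0$ versus $t_{i+1}=1$ when the identity of $j_k$ matters.

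The main obstacle I anticipate is purely bookkeeping: correctly enumerating \emph{all} pairs $(i,j)$ with non-coprime leading terms and checking the reduction in each, while keeping the two parity cases ($n$ even vs.\ odd) and the two choices of each $t_{i+1}$ straight. There is a real risk of missing an overlap pattern — in particular overlaps between $s_{2i}$ and $s_{2j}$ for $|i-j|$ small where the shared variable is one of the ``$j$-indexed'' slots, or between $s_k$ and $s_1$. I would mitigate this by proving a clean structural claim first: the only variables that occur in more than one leading term are the $x_{2i+3}$ (in $\init s_{2i}$ and $\init s_{2i+1}$) and the $x_{2i+1}, x_{2i+2}$ slots (each of which can reappear as the ``$j$'' variable or as a middle variable of a generator at the adjacent level); everything else occurs in at most one leading monomial. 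Once that claim is in hand, the $S$-pair check collapses to a short, uniform list of local computations, and I would present one representative reduction in full and indicate that the remaining ones are identical up to relabeling. Finally, Buchberger's criterion gives the conclusion.
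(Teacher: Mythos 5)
Your proposal follows essentially the same route as the paper's proof: Buchberger's criterion, the leading terms $x_2x_3$, $x_{2i+1}x_{2i+3}$, $x_{2i+2}x_{2i+3}$, discarding coprime pairs so that only the local overlaps $S_{2i-1,2i}$, $S_{2i,2i+1}$, $S_{2i,2i+2}$ survive, and reductions driven by the recursions $j_{2i}\in\{j_{2i-2},j_{2i-1}\}$, $j_{2i+1}\in\{2i-1,2i\}$ with casework on $t_{i+1}$. One small caution: in degrevlex the smaller of two equal-degree monomials is the one containing the smallest \emph{variable} (here $x_{2i+4}$, the largest subscript), not the one containing the smallest subscript as you state; since $x_{j_k}$ and $x_{2i+4}$ happen to lie in the same monomial your identification of the leading terms is nonetheless correct.
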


\begin{proof}
This is a straightforward computation using Buchberger's Criterion and properties of the $j_k$ from Remark~\ref{jk}.  By Remark~\ref{ti}, for $1\leq i \leq n$ the ideal $I_{G_n^t}$ is generated by
    \begin{eqnarray*}
    s_{1}&=&x_{2}x_{3}-x_{j_{1}}x_{4}\\
    s_{2i}&=&x_{2i+1}x_{2i+3}-x_{j_{2i}}x_{2i+4}\\
    s_{2i+1}&=&x_{2i+2}x_{2i+3}-x_{j_{2i+1}}x_{2i+4},
    \end{eqnarray*}
    where $j_1=j_2=0$, $j_3=2$, and for $i\geq 2$, we have
    \begin{eqnarray*}
    t_{i+1}&=&0\iff j_{2i}=j_{2i-2}\iff j_{2i+1}=2i-1\\
    t_{i+1}&=&1\iff j_{2i}=j_{2i-1}\iff j_{2i+1}=2i.
    \end{eqnarray*}

If we adopt $S$-polynomial notation $S_{i,j}$ for the $S$-polynomial of $s_i$ and $s_j$, then the cases to consider are 
\[
S_{2i-1,2i}\text{ and } S_{2i,2i+1}\text{ for }1\leq i\leq n
\]
\[
S_{2i,2i+2}\text{ for }1\leq i\leq n-1.
\]
To give a flavor of the computation involved, we show the case $S_{2i-1,2i}$ for $1\leq i\leq n$, and leave the remaining cases to the reader.  We show in each subcase that $S_{2i-1,2i}$ is equal to a sum of basis elements with coefficients in $S(n)$, so that the reduced form of $S_{2i-1,2i}$ is zero in each subcase.  We have 
\begin{eqnarray*}
S_{2i-1,2i}&=&x_{2i+3}(x_{2i}x_{2i+1}-x_{j_{2i-1}}x_{2i+2})-x_{2i}(x_{2i+1}x_{2i+3}-x_{j_{2i}}x_{2i+4})\\
&=&-x_{j_{2i-1}}x_{2i+2}x_{2i+3}+x_{j_{2i}}x_{2i}x_{2i+4}
\end{eqnarray*}

\vspace{12pt}

Case 1: If $i\geq 2$ and $t_{i+1}=0$, then $j_{2i}=j_{2i-2}$ and $j_{2i+1}=2i-1$, so we have
\begin{eqnarray*}
S_{2i-1,2i}&=&-x_{j_{2i-1}}x_{2i+2}x_{2i+3}+x_{j_{2i-2}}x_{2i}x_{2i+4}
\end{eqnarray*}

\begin{quote}
    Case 1.1: If in addition $i\geq 3$ and $t_{i}=0$, then $j_{2i-2}=j_{2i-4}$ and $j_{2i-1}=2i-3$, so we have
    \begin{eqnarray*}
    S_{2i-1,2i}&=&-x_{2i-3}x_{2i+2}x_{2i+3}+x_{j_{2i-4}}x_{2i}x_{2i+4}\\
    &=&-x_{2i-3}(x_{2i+2}x_{2i+3}-x_{j_{2i+1}}x_{2i+4})+x_{2i+4}(x_{2i-3}x_{2i-1}-x_{j_{2i-4}}x_{2i})\\
    &=&-x_{2i-3}s_{2i+1}+x_{2i+4}s_{2i-4}.
    \end{eqnarray*}
\end{quote}

\begin{quote}
    Case 1.2: If in addition $i=2$ or $i\geq 3$ and $t_{i}=1$, then $j_{2i-2}=j_{2i-3}$ and $j_{2i-1}=2i-2$, so we have
    \begin{eqnarray*}
    S_{2i-1,2i}&=&-x_{2i-2}x_{2i+2}x_{2i+3}+x_{j_{2i-3}}x_{2i}x_{2i+4}\\
    &=&-x_{2i-2}(x_{2i+2}x_{2i+3}-x_{j_{2i+1}}x_{2i+4})+x_{2i+4}(x_{2i-2}x_{2i-1}-x_{j_{2i-3}}x_{2i})\\
    &=&-x_{2i-2}s_{2i+1}+x_{2i+4}s_{2i-3}.
    \end{eqnarray*}
\end{quote}

Case 2: If $i=1$ or if $i\geq 2$ and $t_{i+1}=1$, then $j_{2i}=j_{2i-1}$ and $j_{2i+1}=2i$, so we have
\begin{eqnarray*}
S_{2i-1,2i}&=&-x_{j_{2i}}x_{2i+2}x_{2i+3}+x_{j_{2i}}x_{2i}x_{2i+4}\\
&=&-x_{j_{2i}}s_{2i+1}.
\end{eqnarray*}

This concludes the case $S_{2i-1,2i}$ for $1\leq i\leq n-1$.  The remaining cases are similar.

\end{proof}

\begin{cor}\label{koszul}
The ring $R(n,t)$ is Koszul for all $n$ and all $t$.
\end{cor}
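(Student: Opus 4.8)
The plan is to read off Koszulness directly from the Gr\"obner basis computed in Lemma~\ref{Grobner}. First I would note that, by Remark~\ref{ti}, every standard generator $s_k$ is a binomial $x_ax_b-x_cx_d$, hence homogeneous of degree $2$ with respect to the standard grading on $S(n)$ (in which each variable has degree $1$); thus $I_{G_n^t}$ is a homogeneous ideal generated in degree $2$, and $R(n,t)$ is a standard graded quadratic algebra. By Lemma~\ref{Grobner}, $B=\{s_1,\ldots,s_{2n+1}\}$ is a Gr\"obner basis of $I_{G_n^t}$ for the degree reverse lexicographic order $>$, so $\init I_{G_n^t}$ is generated by the leading monomials of the $s_k$, each of which is a product of two distinct variables. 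Hence $\init I_{G_n^t}$ is a quadratic squarefree monomial ideal.

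The second step is to invoke the standard Gr\"obner-deformation principle. A quadratic monomial ideal defines a Koszul algebra, so $S(n)/\init I_{G_n^t}$ is Koszul. Moreover, passing to the initial ideal is a flat degeneration (so $S(n)/I_{G_n^t}$ and $S(n)/\init I_{G_n^t}$ share the same Hilbert function), and Koszulness lifts from the special fiber to the generic fiber: if $S(n)/\init I_{G_n^t}$ is Koszul, then so is $R(n,t)=S(n)/I_{G_n^t}$. Both ingredients are available in the form we need in \cite{herzog-hibi-ohsugi}; the upshot, which I would cite directly, is that an algebra admitting a quadratic Gr\"obner basis is Koszul. Alternatively, as noted in the introduction, one may skip the Gr\"obner argument and apply the results of \cite{herzog-hibi-ohsugi} on toric rings of chordal bipartite graphs directly, using Proposition~\ref{chordal} and Proposition~\ref{edgerings}.

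There is essentially no obstacle beyond Lemma~\ref{Grobner} itself. The only points needing care are confirming that the ambient grading is the standard one --- immediate, since each $x_i$ has degree $1$ and each $s_k$ is a difference of two squarefree degree-$2$ monomials --- and citing the lift-of-Koszulness statement in a version that accepts a monomial (as opposed to merely quadratic) special fiber, which is exactly the classical case.
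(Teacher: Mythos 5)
Your proposal is correct and follows exactly the paper's route: the generators $s_k$ form a quadratic Gr\"obner basis by Lemma~\ref{Grobner}, and Koszulness then follows from the standard result (Theorem 2.28 of \cite{herzog-hibi-ohsugi}) that an algebra admitting a quadratic Gr\"obner basis is Koszul. The extra detail you supply about flat degeneration and lifting Koszulness from the monomial special fiber is just an unpacking of that cited theorem.
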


\begin{proof}
    Since $I_{G_n^t}$ has a quadratic Gr\"obner basis, the ring $R(n,t)$ is Koszul for all $n$ and all $t$ due to \cite[Th 2.28]{herzog-hibi-ohsugi}.
\end{proof}

\begin{cor}\label{initial}
    The initial ideal for $I_{G_n^t}$ with respect to the degree reverse lexicographic monomial order $>$ is 
    \begin{equation*}
        \init I_{G_n^t}=(x_{2}x_{3},\{x_{2i+1}x_{2i+3},x_{2i+2}x_{2i+3} \mid 1\leq i\leq n\}).
    \end{equation*}
\end{cor}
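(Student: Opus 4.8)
The plan is to read this off directly from Lemma~\ref{Grobner}. Since $B=\{s_1,\ldots,s_{2n+1}\}$ is a Gröbner basis for $I_{G_n^t}$ with respect to $>$, the initial ideal $\init I_{G_n^t}$ is generated by the leading terms $\init s_1,\ldots,\init s_{2n+1}$. So the entire task reduces to pinning down the leading monomial of each standard generator from Remark~\ref{ti} under the degree reverse lexicographic order with $x_0>x_2>x_3>\cdots$.

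Here I would recall the tie-breaking convention for degrevlex: among two monomials of the same total degree, if exactly one of them is divisible by the variable that is smallest in the order, then the one \emph{not} divisible by that variable is the larger. Now inspect the three shapes of generator, using the bounds on the $j_k$ from Remark~\ref{jk}. For $s_1=x_2x_3-x_0x_4$, the variable of largest index appearing is $x_4$, and it divides only the second monomial $x_0x_4$; hence $\init s_1=x_2x_3$. For $s_{2i}=x_{2i+1}x_{2i+3}-x_{j_{2i}}x_{2i+4}$, since $j_{2i}\leq 2i-2$ the variable of largest index appearing is $x_{2i+4}$, and it divides only the second monomial; hence $\init s_{2i}=x_{2i+1}x_{2i+3}$. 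Likewise for $s_{2i+1}=x_{2i+2}x_{2i+3}-x_{j_{2i+1}}x_{2i+4}$, since $j_{2i+1}\in\{2i-1,2i\}$ the variable $x_{2i+4}$ again divides only the subtracted monomial, so $\init s_{2i+1}=x_{2i+2}x_{2i+3}$.

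Assembling these,
\[
\init I_{G_n^t}=(\init s_1,\ldots,\init s_{2n+1})=(x_2x_3,\{x_{2i+1}x_{2i+3},x_{2i+2}x_{2i+3}\mid 1\leq i\leq n\}),
\]
as claimed. There is no real obstacle here: the only point needing any care is getting the degrevlex comparison right and confirming — which is immediate from the index bounds on the $j_k$ in Remark~\ref{jk} — that in every generator $s_k$ the variable of largest index occurs only in the negative monomial. Once that is noted, the leading terms are forced and the corollary follows at once from Lemma~\ref{Grobner}.
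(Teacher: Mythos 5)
Your proposal is correct and takes essentially the same route as the paper, which states this as an immediate consequence of Lemma~\ref{Grobner} without writing out the leading-term computation. Your identification of the leading monomials via the degrevlex tie-breaking rule and the bounds $j_{2i}\leq 2i-2$, $j_{2i+1}\in\{2i-1,2i\}$ from Remark~\ref{jk} is exactly the (implicit) content of the paper's deduction.
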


\noindent We note that $\init I_{G_n^t}$ does not depend on $t$, which will be useful for the following sections. 

We use the initial ideal $\init I_{G_n^t}$ from Corollary~\ref{initial} and direct computation to show the Krull dimension of $R(n,t)$. As a corollary, we obtain the projective dimension of $R(n,t)$.  We note that the Krull dimension, like the initial ideal, does not depend on $t$.  We refer the reader to Remark~\ref{ti} for a reminder of how to think of the toric ring \[
R(n,t)=\frac{S(n)}{I_{G_n^t}}=\frac{k[x_0,x_2,x_3,\ldots, x_{2n+4}]}{I_{G_n^t}}
\]
in the context of this work.

\begin{thm}\label{dimension}

The Krull dimension of $R(n,t)$ is 
\[
\dim R(n,t)= n+3.
\]

\end{thm}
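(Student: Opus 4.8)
The plan is to replace $R(n,t)$ by the much simpler monomial ring $S(n)/\init I_{G_n^t}$ and then compute the Krull dimension combinatorially. Since passing to an initial ideal preserves the Hilbert function, we have $\dim R(n,t) = \dim S(n)/\init I_{G_n^t}$, and by Corollary~\ref{initial} the right‑hand side is a quotient of the $(2n+4)$‑variable polynomial ring $S(n)=k[x_0,x_2,x_3,\ldots,x_{2n+4}]$ by a squarefree quadratic monomial ideal $J:=\init I_{G_n^t}$. For such an ideal one has $\dim S(n)/J = (2n+4)-\operatorname{ht} J$, and the minimal primes of $J$ are exactly the ideals generated by the minimal vertex covers of the graph $H$ on the variables of $S(n)$ whose edges are the monomial generators of $J$; hence $\operatorname{ht} J$ equals the minimum size $\tau(H)$ of a vertex cover of $H$. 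So the theorem reduces to the statement $\tau(H)=n+1$.

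For the lower bound $\tau(H)\ge n+1$ I would exhibit a matching of that size in $H$: the ``vertical'' generators $x_2x_3$ together with $x_{2i+2}x_{2i+3}$ for $1\le i\le n$ give the $n+1$ edges $\{x_{2k},x_{2k+1}\}$, $k=1,\dots,n+1$, and these are pairwise disjoint, so any vertex cover must spend a distinct vertex on each. For the upper bound $\tau(H)\le n+1$ I would check that $C=\{x_3,x_5,\dots,x_{2n+3}\}=\{x_{2k+1}:1\le k\le n+1\}$ is a vertex cover: every generator in Corollary~\ref{initial}, namely $x_2x_3$, $x_{2i+1}x_{2i+3}$, and $x_{2i+2}x_{2i+3}$, contains either $x_3$ or $x_{2i+3}=x_{2(i+1)+1}$, all of which lie in $C$. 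Since $|C|=n+1$, this gives $\tau(H)=n+1$ and therefore $\dim R(n,t)=(2n+4)-(n+1)=n+3$.

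There is essentially no analytic content here — the work is all index bookkeeping — so the one thing to watch is keeping the ranges straight: confirming that $S(n)$ really has $2n+4$ variables (the extra one being $x_0$), that the $n+1$ vertical edges are genuinely disjoint, and that $C$ meets every generator. It may help the reader to observe that $H$ is in fact the path $x_3-x_5-\cdots-x_{2n+3}$ with one pendant vertex ($x_2$, then $x_4,x_6,\dots,x_{2n+2}$) attached at each path vertex, together with the two isolated vertices $x_0$ and $x_{2n+4}$, which makes all of the above verifications transparent. An alternative, foreshadowed by Proposition~\ref{sopedgering}, would be to get $\dim R(n,t)\le n+3$ by showing directly that the quotient by the $n+3$ linear forms $X_n$ is Artinian and to get the reverse inequality from an explicit chain of primes (equivalently, a size‑$(n+3)$ independent set) for $\init I_{G_n^t}$; but the vertex‑cover computation settles both inequalities at once, so that is what I would write.
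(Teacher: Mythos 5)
Your proof is correct. Both you and the paper begin by replacing $I_{G_n^t}$ with $\init I_{G_n^t}$ (same justification: passing to the initial ideal preserves dimension), but from there the routes diverge. The paper gets the lower bound $\dim \ge n+3$ from an explicit chain of primes starting at $P_n=(x_3,x_5,\dots,x_{2n+3})$ --- which is exactly your vertex cover $C$, so that half is morally identical to your upper bound on $\tau(H)$ --- and gets the upper bound $\dim\le n+3$ by quotienting by the $n+3$ linear forms $X_n$ and checking the result is Artinian. You instead invoke the standard dictionary for edge ideals ($\operatorname{ht}\init I_{G_n^t}=\tau(H)$, minimal primes = minimal vertex covers) and pin down $\tau(H)=n+1$ with a matching of size $n+1$ against the cover $C$; this settles both inequalities at once and your identification of $H$ as a comb (path on the odd variables with a pendant even variable at each node, plus the isolated vertices $x_0,x_{2n+4}$) makes every verification transparent --- indeed the paper itself uses this comb picture later in the alternate regularity proof. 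What the paper's longer route buys is the byproduct recorded in Remark~\ref{xtsop}: the Artinian-reduction computation shows that $X_n$ is a linear system of parameters for $S(n)/(\init I_{G_n^t})$, which is then the scaffolding for Proposition~\ref{sopedgering} and the ring $\widehat{R(n,t)}$ used throughout Section~\ref{lmr}. If you adopted your argument in the paper, that verification would still have to be done somewhere; as a standalone proof of the dimension formula, though, yours is cleaner.
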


\begin{proof}
Let $>$ be the degree reverse lexicographic monomial order with
\[
x_0>x_2>x_3>\cdots>x_{2n+4}.
\]
By Corollary~\ref{initial}, the initial ideal of $I_{G_n^t}$ with respect to $>$ is 
\begin{equation*}
        \init I_{G_n^t}=(x_{2}x_{3},\{x_{2i+1}x_{2i+3},x_{2i+2}x_{2i+3} \mid 1\leq i\leq n\}).
    \end{equation*}

Since $S(n)/(\init I_{G_n^t})$ and  $R(n,t)=S(n)/I_{G_n^t}$ are known to have the same Krull dimension (see for example \cite[Props 9.3.4 and 9.3.12]{cox-little-oshea}),
it suffices to prove that 
\[
\dim S(n)/(\init I_{G_n^t}) = n+3.
\]

To see that the dimension is at least $n+3$, we construct a chain of prime ideals in $S(n)$ containing $\init I_{G_n^t}$.  Since every monomial generator of $\init I_{G_n^t}$ contains a variable of odd index, we begin with  $P_{n}=(\{x_{k} \mid \text{$k$ odd, $2<k<2n+4$}\})$, a prime ideal containing $\init I_{G_n^t}$. Then we have the chain of prime ideals  $P_{n}\subsetneq P_{n}+(x_{0})\subsetneq P_{n}+(x_{0},x_{2})\subsetneq P_{n}+(x_{0},x_{2},x_{4})\subsetneq\cdots\subsetneq P_{n}+(\{x_{2i} \mid 0\leq i \leq n+2\})$, so that
\[
\dim S(n)/(\init I_{G_n^t})\geq n+3. 
\]

To see that the dimension is at most $n+3$, we find a sequence of $n+3$ elements in $S(n)/(\init I_{G_n^t})$ such that the quotient by the ideal they generate has dimension zero. Let
\[
X_n=x_0,x_2-x_3,x_4-x_5,\ldots,x_{2n}-x_{2n+1},x_{2n+2}-x_{2n+3},x_{2n+4}
\]
in $S(n)$, and take the quotient of $S(n)/(\init I_{G_n^t})$ by the image of $X_n$ to obtain the following.  In the last step, we rewrite the quotient of $S(n)$ and $(\init I_{G_n^t})+(X_n)$ by $(X_n)$ by setting $x_0$ and $x_{2n+4}$ equal to zero and replacing $x_{2i}$ with $x_{2i+1}$ for $1\leq i\leq n+1$:
\begin{eqnarray*}
    \frac{S(n)/(\init I_{G_n^t})}{((\init I_{G_n^t})+(X_n))/(\init I_{G_n^t})}&\cong&\frac{S(n)}{((\init I_{G_n^t})+(X_n))}\\
    &\cong&\frac{S(n)/(X_n)}{((\init I_{G_n^t})+(X_n))/(X_n)}\\
    &\cong&\frac{k[x_3,x_5,\ldots,x_{2n+1},x_{2n+3}]}{(x_3^2,\{x_{2i+1}x_{2i+3},x_{2i+3}^2\mid 1\leq i\leq n\})}.
\end{eqnarray*}
Since
\[
\sqrt{(x_3^2,\{x_{2i+1}x_{2i+3},x_{2i+3}^2\mid 1\leq i\leq n\})}=(x_3,x_5,\ldots,x_{2n+3}),
\]
the above ring has dimension zero.
Thus, 
\[
\dim S(n)/(\init I_{G_n^t}) \leq n+3.
\]

We conclude that $\dim R(n,t) = \dim S(n)/(\init I_{G_n^t})=n+3$.
\end{proof}

\begin{cor}\label{pdimension}

The projective dimension of $R(n,t)$ over $S(n)$ is 
\[
\pdim_{S(n)} R(n,t)= n+1.
\]

\end{cor}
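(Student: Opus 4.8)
The plan is to derive this immediately from Theorem~\ref{dimension} together with the Cohen--Macaulay property of $R(n,t)$, using the Auslander--Buchsbaum formula. First I would record that $S(n)=k[x_0,x_2,x_3,\ldots,x_{2n+4}]$ is a polynomial ring in $2n+4$ indeterminates (the index $1$ being absent, so the count is $1+(2n+3)$), hence a regular ring with $\depth S(n)=\dim S(n)=2n+4$.

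Next, by Corollary~\ref{CM} the ring $R(n,t)$ is Cohen--Macaulay, so as a finitely generated graded module over $S(n)$ (via the surjection $S(n)\to R(n,t)$) its depth equals its Krull dimension; by Theorem~\ref{dimension} this common value is $n+3$. Finally, applying the graded Auslander--Buchsbaum formula to $R(n,t)$ as an $S(n)$-module,
\[
\pdim_{S(n)} R(n,t)=\depth S(n)-\depth_{S(n)} R(n,t)=(2n+4)-(n+3)=n+1,
\]
which is the claim.

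There is essentially no obstacle here: all of the content is carried by Theorem~\ref{dimension} and Corollary~\ref{CM}, and the only point demanding attention is the correct count of the variables of $S(n)$, on which the concluding arithmetic hinges. One could alternatively phrase the argument through the standard fact that a Cohen--Macaulay quotient of a polynomial ring has projective dimension equal to its codimension, which is again $(2n+4)-(n+3)=n+1$; this is the same computation.
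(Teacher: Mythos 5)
Your proof is correct and is essentially identical to the paper's: both count the $2n+4$ variables of $S(n)$, invoke Cohen--Macaulayness from Corollary~\ref{CM} together with $\dim R(n,t)=n+3$ from Theorem~\ref{dimension}, and conclude via the graded Auslander--Buchsbaum formula.
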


\begin{proof}
    We know the Krull dimension of the polynomial ring $S(n)$ is $2n+4$.  The result follows from the fact that $R(n,t)$ is Cohen-Macaulay (Corollary~\ref{CM}) and from the graded version of the Auslander-Buchsbaum formula.
\end{proof}

\begin{rmk}\label{xtsop}
    The proof of the previous theorem shows that the image of
    \[
    X_n=x_{0},x_{2}-x_{3},x_{4}-x_{5},\ldots,x_{2n}-x_{2n+1},x_{2n+2}-x_{2n+3},x_{2n+4}
    \]
    in $S(n)/(\init I_{G_n^t})$ is a system of parameters for $S(n)/(\init I_{G_n^t})$.  We prove in the next theorem that the image of $X_n$ in $R(n,t)$ (which we call $\overline{X_n}$) is also a system of parameters for $R(n,t)$.  Before doing so, we introduce some notation and a definition which allows us to better grapple with the quotient ring $R(n,t)/(\overline{X_n})$.
\end{rmk}

\begin{defn}\label{widehatnotation}
    Consider the isomorphism
    \[
    \frac{R(n,t)}{(\overline{X_n})}=\frac{S(n)/(I_{G_n^t})}{((I_{G_n^t})+(X_n))/(I_{G_n^t})}\cong\frac{S(n)/(X_n)}{((I_{G_n^t})+(X_n))/(X_n)}
    \]
    We view taking the quotient by $X_n$ as setting $x_0$ and $x_{2n+4}$ equal to zero and replacing $x_{2i}$ with $x_{2i+1}$ for $1\leq i\leq n+1$ to obtain
    \begin{equation*}
    \widehat{S(n)}:=k[x_3,x_5,\ldots,x_{2n+1},x_{2n+3}]\cong S(n)/(X_n).
    \end{equation*}
    By the same process (detailed below), we obtain the ideal $\widehat{I_{G_n^t}}\cong(I_{G_n^t}+(X_n))/(X_n)$.
    We further define the quotient
    \begin{equation*}
    \widehat{R(n,t)}:=\widehat{S(n)}/\widehat{I_{G_n^t}}\cong R(n,t)/(\overline{X_n}).
    \end{equation*}
    We find this notation natural since it is often used for the removal of variables, and the quotient by $X_n$ may be viewed as identifying and removing variables.  Since this work has no completions in it, there should be no conflict of notation.
\end{defn}

\begin{defn}\label{widehati}

To define $\widehat{I_{G_n^t}}$ in particular, we recall the standard generators of $I_{G_n^t}$ and introduce further notation to describe the generators of $\widehat{I_{G_n^t}}\cong(I_{G_n^t}+(X_n))/(X_n)$.  By Remark~\ref{ti}, the standard generators of $I_{G_n^t}$ are 
    \begin{eqnarray*}
    s_1&=&x_{2}x_{3}-x_{j_{1}}x_{4}\\
    s_{2i}&=&x_{2i+1}x_{2i+3}-x_{j_{2i}}x_{2i+4}\\
    s_{2i+1}&=&x_{2i+2}x_{2i+3}-x_{j_{2i+1}}x_{2i+4},
    \end{eqnarray*}
    for $1\leq i\leq n$, where the nonnegative integers $j_k$ are as in Remark~\ref{jk}.

Let $\widehat{\iota}$ be the largest index such that $j_{2\widehat{\iota}}=0$.  By Remark~\ref{jk}, we see that the $j_{2i}$ are defined recursively and form a non-decreasing sequence.  Then
\begin{equation*}
    j_1=j_2=j_4=j_6=\cdots=j_{2\widehat{\iota}}=0,
\end{equation*}
and since we view taking the quotient by $X_n$ as setting $x_0$ and $x_{2n+4}$ equal to zero and replacing $x_{2i}$ with $x_{2i+1}$ for $1\leq i\leq n+1$, we define $\widehat{I_{G_n^t}}$ by replacing $x_{j_k}$ with $x_{J_k}$ (defined below) for $1\leq k<2n$ to obtain
\begin{eqnarray*}
    \widehat{s_1}&=&x_{3}^2-x_{J_{1}}x_{5}\\
    \widehat{s_{2i}}&=&x_{2i+1}x_{2i+3}-x_{J_{2i}}x_{2i+5}\\
    \widehat{s_{2i+1}}&=&x_{2i+3}^2-x_{J_{2i+1}}x_{2i+5}\\
    \widehat{s_{2n}}&=&x_{2n+1}x_{2n+3}\\
    \widehat{s_{2n+1}}&=&x_{2n+3}^2
\end{eqnarray*}
for $1\leq i< n$, where
\[
x_{J_k}=\begin{cases}
            0&\text{ if }k\text{ is even and }k\leq2\widehat{\iota}, \text{ or if }k=1\\
            x_{j_k+1} &\text{ if } 2\widehat{\iota}<k<2n \text{ and $j_k$ is even}\\
            x_{j_k} &\text{ if }j_k\text{ is odd}
        \end{cases} 
\]
We note that $J_k\leq k$ for each $1\leq k< 2n$, since $j_k\leq k-1$ by Remark~\ref{jk}. 
By properties of the original $j_k$ from Remark~\ref{jk}, we know that $x_{J_1}=x_{J_2}=0$, $J_3=3$, and for $2\leq i<n$,
    \begin{eqnarray*}
        t_{i+1}&=&0\iff x_{J_{2i}}=x_{J_{2i-2}}\iff J_{2i+1}=2i-1\\
        t_{i+1}&=&1\iff x_{J_{2i}}=x_{J_{2i-1}}\iff J_{2i+1}=2i+1.
    \end{eqnarray*}
\end{defn}

\begin{exmp}
    We construct the ring $R(2,(0,0,0))$. For the graph $G_2^{(0,0,0)}\in \mathcal{F}$, we have the toric ring
    \[
    R(2,(0,0,0))=\frac{k[x_0,x_2,x_3,\ldots,x_{8}]}{(x_2x_3-x_0x_4,x_3x_5-x_0x_6,x_4x_5-x_2x_6,x_5x_7-x_0x_8,x_6x_7-x_3x_8)}
    \]
    coming from the ladder-like structure
\[
L_2^{(0,0,0)}=
\begin{matrix}
x_0 & x_2 & x_5\\
x_3 & x_4 & x_6\\
x_7 &     & x_8
\end{matrix}
\]
from Example~\ref{mexmp1}.  We know 
\[
X_2=x_{0},x_{2}-x_{3},x_{4}-x_{5},\ldots,x_4-x_{5},x_{6}-x_{7},x_{8},
\]
so that $R(2,(0,0,0))/(\overline{X_2})$ is isomorphic to 
\begin{equation*}
    \frac{k[x_0,x_2,x_3,x_4,x_5,x_6,x_7,x_{8}]}{(x_2x_3-x_0x_4,x_3x_5-x_0x_6,x_4x_5-x_2x_6,x_5x_7-x_0x_8,x_6x_7-x_3x_8,x_{0},x_{2}-x_{3},\ldots,x_{8})}
\end{equation*}
\begin{equation*}
    \cong\frac{k[x_3,x_5,x_7]}{(x_3^2,x_3x_5,x_5^2-x_3x_7,x_5x_7,x_7^2)}=\widehat{R(2,(0,0,0))}.
\end{equation*}
\end{exmp}

Now we show that $X_n$ is also a system of parameters for $R(n,t)$, and not just for the quotient by the initial ideal.

\begin{prop}\label{sopedgering}

Let $R(n,t)=S(n)/I_{G_n^t}$ and let
\begin{equation*}
    X_n=x_{0},x_{2}-x_{3},x_{4}-x_{5},\ldots,x_{2n}-x_{2n+1},x_{2n+2}-x_{2n+3},x_{2n+4}
\end{equation*}
so that the image of $X_n$ in $S(n)/(\init I_{G_n^t})$ is the system of parameters from Remark~\ref{xtsop}. Then the image of $X_n$ in $R(n,t)$ is a system of parameters for $R(n,t)$.  

\end{prop}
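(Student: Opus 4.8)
The plan is to use that a system of parameters for a Noetherian graded ring $R$ of Krull dimension $d$ is exactly a sequence of $d$ elements of the irrelevant maximal ideal whose quotient has Krull dimension zero. Since $X_n$ consists of $n+3$ elements of the irrelevant maximal ideal of $S(n)$ and $\dim R(n,t)=n+3$ by Theorem~\ref{dimension}, it suffices to show that $R(n,t)/(\overline{X_n})$ is Artinian, that is, has Krull dimension zero.

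I would first note that one cannot simply transport the computation recorded in Remark~\ref{xtsop}: passing to initial ideals only gives
\[
\dim R(n,t)/(\overline{X_n}) \;\le\; \dim\, S(n)/\bigl(\init I_{G_n^t}+(x_0,x_2,x_4,\ldots,x_{2n+4})\bigr),
\]
and the ring on the right is $k[x_3,x_5,\ldots,x_{2n+3}]$ modulo the ``path'' monomial ideal $(x_3x_5,x_5x_7,\ldots,x_{2n+1}x_{2n+3})$, which has positive Krull dimension. So the binomial relations are genuinely needed here, which is precisely why $\widehat{I_{G_n^t}}$ was written out explicitly in Definition~\ref{widehati}.

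The plan, then, is to use the isomorphism $R(n,t)/(\overline{X_n})\cong\widehat{R(n,t)}=\widehat{S(n)}/\widehat{I_{G_n^t}}$ from Definition~\ref{widehatnotation}, where $\widehat{S(n)}=k[x_3,x_5,\ldots,x_{2n+3}]$, and to prove that $\sqrt{\widehat{I_{G_n^t}}}$ equals the whole irrelevant maximal ideal $(x_3,x_5,\ldots,x_{2n+3})$; this forces $\dim\widehat{R(n,t)}=0$. Only the odd-subscript generators $\widehat{s_1},\widehat{s_3},\ldots,\widehat{s_{2n+1}}$ of $\widehat{I_{G_n^t}}$ are needed. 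I would show by induction on $i$ that $x_{2i+3}\in\sqrt{\widehat{I_{G_n^t}}}$ for $0\le i\le n$: for $i=0$ this is immediate from $\widehat{s_1}=x_3^2$, and for $i=n$ from $\widehat{s_{2n+1}}=x_{2n+3}^2$; and for $1\le i<n$ we have $\widehat{s_{2i+1}}=x_{2i+3}^2-x_{J_{2i+1}}x_{2i+5}$ where, by Definition~\ref{widehati}, $J_{2i+1}$ is odd with $3\le J_{2i+1}\le 2i+1$, so that $x_{J_{2i+1}}\in\{x_3,x_5,\ldots,x_{2i+1}\}$ already lies in $\sqrt{\widehat{I_{G_n^t}}}$ by the inductive hypothesis, whence $x_{2i+3}^2=\widehat{s_{2i+1}}+x_{J_{2i+1}}x_{2i+5}\in\sqrt{\widehat{I_{G_n^t}}}$ and therefore $x_{2i+3}\in\sqrt{\widehat{I_{G_n^t}}}$. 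This completes the induction and the proof.

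The crux of the argument --- more a point to get right than a serious obstacle, once Definition~\ref{widehati} is available --- is the observation that, after the linear substitution encoded by $X_n$, the variables are killed ``one at a time'' by the squared leading monomials $x_{2i+3}^2$ in the odd relations $\widehat{s_{2i+1}}$, a feature invisible at the level of $\init I_{G_n^t}$. Everything else is routine bookkeeping with the indices $J_k$ via Remark~\ref{jk} and Definition~\ref{widehati}.
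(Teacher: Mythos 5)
Your proof is correct and follows essentially the same route as the paper: reduce via Theorem~\ref{dimension} and Definition~\ref{widehatnotation} to showing $\dim\widehat{R(n,t)}=0$, and prove $\sqrt{\widehat{I_{G_n^t}}}=(x_3,x_5,\ldots,x_{2n+3})$ by induction using only the odd-indexed generators $\widehat{s_{2i+1}}=x_{2i+3}^2-x_{J_{2i+1}}x_{2i+5}$ together with the bounds on $J_{2i+1}$ from Definition~\ref{widehati}. Your preliminary remark that the naive initial-ideal bound only yields the positive-dimensional path-ideal quotient is also accurate, and correctly identifies why the binomial relations in $\widehat{I_{G_n^t}}$ are genuinely needed.
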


\begin{proof}

Let $X_n$ be defined as above.  Then by Theorem~\ref{dimension} and Definition~\ref{widehatnotation} we need only show that $\dim \widehat{R(n,t)}=0$.  We have for $n=0$ that 

\begin{equation*}
    \widehat{R(0,t)}=\frac{k[x_3]}{(x_3^2)},
\end{equation*}
for $n=1$
\begin{equation*}
    \widehat{R(1,t)}=\frac{k[x_3,x_5]}{(x_3^2,x_3x_5,x_5^2)},
\end{equation*}
and for $ n>1$
\begin{equation*}
    \widehat{R(n,t)}=\frac{\widehat{S(n)}}{\widehat{I_{G_n^t}}}=\frac{k[x_3,x_5,\ldots,x_{2n+1},x_{2n+3}]}{(\{\widehat{s_1},\widehat{s_{2i}},\widehat{s_{2i+1}}\mid 1\leq i\leq n\})},
\end{equation*}
where
\begin{eqnarray*}
    \widehat{s_1}&=&x_{3}^2\\
    \widehat{s_{2i}}&=&x_{2i+1}x_{2i+3}-x_{J_{2i}}x_{2i+5}\\
    \widehat{s_{2i+1}}&=&x_{2i+3}^2-x_{J_{2i+1}}x_{2i+5}\\
    \widehat{s_{2n}}&=&x_{2n+1}x_{2n+3}\\
    \widehat{s_{2n+1}}&=&x_{2n+3}^2
\end{eqnarray*}
for $1\leq i < n$ from Definition~\ref{widehati}.
We know $\dim \widehat{R(n,t)}=\dim \widehat{S(n)}/\widehat{I_{G_n^t}}=\dim \widehat{S(n)}/\sqrt{\widehat{I_{G_n^t}}}$. We claim that
\begin{equation*}
    \sqrt{\widehat{I_{G_n^t}}}=\left(x_{3},x_{5},\ldots,x_{2n+1},x_{2n+3}\right).
\end{equation*}
This is clear for $n\in\{0,1\}$.  For $n >1$, we prove this by induction.  Since $\widehat{s_1}=x_3^2$ and $\widehat{s_{2n+1}}=x_{2n+3}^2$ are in $\widehat{I_{G_n^t}}$, we have $x_{3}, x_{2n+3}\in\sqrt{\widehat{I_{G_n^t}}}$. Since
\[
\widehat{s_{3}}=x_5^2-x_3x_7\in \widehat{I_{G_n^t}}\subseteq \sqrt{\widehat{I_{G_n^t}}}
\]
and $x_{3}\in\sqrt{\widehat{I_{G_n^t}}}$, we get $x_{5}^2\in\sqrt{\widehat{I_{G_n^t}}}$, so that $x_{5}\in\sqrt{\widehat{I_{G_n^t}}}$. Now suppose $x_{2i-1},x_{2i+1}\in\sqrt{\widehat{I_{G_n^t}}}$ for $2\leq i< n$. We have
\[
\widehat{s_{2i+1}}=x_{2i+3}^{2}-x_{J_{2i+1}}x_{2i+5}\in\widehat{I_{G_n^t}}\subseteq\sqrt{\widehat{I_{G_n^t}}}.
\]
But $x_{J_{2i+1}}\in\{x_{2i-1},x_{2i+1}\}$ by Definition~\ref{widehati} and $\{x_{2i-1},x_{2i+1}\}\subseteq\sqrt{\widehat{I_{G_n^t}}}$ by induction, so that $x_{2i+3}^{2}\in\sqrt{\widehat{I_{G_n^t}}}$, and hence $x_{2i+3}\in\sqrt{\widehat{I_{G_n^t}}}$. We conclude that 
\[
\left(x_{3},x_{5},\ldots,x_{2n+1},x_{2n+3}\right)\subseteq\sqrt{\widehat{I_{G_n^t}}}\subseteq\left(x_{3},x_{5},\ldots,x_{2n+1},x_{2n+3}\right),
\]
so we have equality. Since $\widehat{S(n)}/\sqrt{\widehat{I_{G_n^t}}}\cong k$ has dimension zero, so does $\widehat{R(n,t)}\cong R(n,t)/(\overline{X_n})$.  Thus, the image of $X_n$ is a system of parameters for $R(n,t)$.
\end{proof}

\begin{rmk}\label{art}
We note that as a consequence of the proof of the preceding theorem, the ring $\widehat{R(n,t)}$ is Artinian, which will be relevant in Section~\ref{lmr}.
\end{rmk}

\begin{cor}\label{xt}
The image of 
\begin{equation*}
    X_n=x_{0},x_{2}-x_{3},x_{4}-x_{5},\ldots,x_{2n}-x_{2n+1},x_{2n+2}-x_{2n+3},x_{2n+4}
\end{equation*}
in $R(n,t)=S(n)/I_{G_n^t}$ is a regular sequence for $R(n,t)$.  
\end{cor}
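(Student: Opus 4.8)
The plan is to deduce this immediately from the Cohen--Macaulay property, since all the real work has already been done. First I would observe that $R(n,t)=S(n)/I_{G_n^t}$ is a standard graded $k$-algebra: the ideal $I_{G_n^t}$ is generated by the quadrics $s_k$, hence is homogeneous, and $R(n,t)$ is generated in degree one. The entries of $X_n$ --- namely $x_0$, the differences $x_{2i}-x_{2i+1}$ for $1\le i\le n+1$, and $x_{2n+4}$ --- are all homogeneous of degree one, so $\overline{X_n}$ is a sequence of $n+3$ homogeneous elements lying in the homogeneous maximal ideal $\mathfrak{m}$ of $R(n,t)$; in particular the list has length exactly $n+3=\dim R(n,t)$ by Theorem~\ref{dimension}, as it must.

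Next I would assemble the three ingredients already in hand. By Theorem~\ref{dimension}, $\dim R(n,t)=n+3$; by Proposition~\ref{sopedgering}, $\overline{X_n}$ is a system of parameters for $R(n,t)$; and by Corollary~\ref{CM}, $R(n,t)$ is Cohen--Macaulay, so $\operatorname{depth}_{\mathfrak{m}} R(n,t)=\dim R(n,t)=n+3$. The corollary then follows from the standard fact that in a Cohen--Macaulay local ring (or, in the graded setting, with respect to the homogeneous maximal ideal) every system of parameters is a regular sequence: a homogeneous system of parameters of length $\dim R(n,t)$ in a standard graded Cohen--Macaulay algebra is automatically a regular sequence on $R(n,t)$. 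Applying this to $\overline{X_n}$ finishes the proof.

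I expect no genuine obstacle here; the only point that deserves a word of care is bookkeeping about the graded setting, so that ``system of parameters'' is taken with respect to $\mathfrak{m}$ and ``regular sequence'' likewise. This is automatic because each entry of $X_n$ is linear, so every partial quotient $R(n,t)/(\text{initial segment of }\overline{X_n})$ is again standard graded and all the relevant nonzerodivisor checks take place in its homogeneous maximal ideal. As an alternative to citing the system-of-parameters theorem, one could argue directly by induction: $R(n,t)$ is Cohen--Macaulay of dimension $n+3$, hence unmixed, so the first element $x_0$ of $\overline{X_n}$ is a nonzerodivisor; the quotient is then Cohen--Macaulay of dimension $n+2$ with the image of the remaining $n+2$ elements still a system of parameters (a system of parameters stays one modulo a parameter), and one repeats. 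Either route is short; I would present the first for brevity.
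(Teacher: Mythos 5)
Your proposal is correct and follows exactly the paper's argument: cite Proposition~\ref{sopedgering} for the system of parameters and Corollary~\ref{CM} for the Cohen--Macaulay property, then invoke the standard fact that a system of parameters in a Cohen--Macaulay (graded) ring is a regular sequence. The extra bookkeeping about the graded setting is a harmless elaboration of the same proof.
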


\begin{proof}

We know by Proposition~\ref{sopedgering} that the image of $X_n$ in $R(n,t)$ is a linear system of parameters.  Since the rings $R(n,t)$ are Cohen-Macaulay (Corollary~\ref{CM}), we are done.
\end{proof}

\subsection{Length, Multiplicity, and Regularity} \label{lmr}

In this section, we determine the multiplicity and Castelnuovo-Mumford regularity of the toric rings $R(n,t)$ coming from the associated graphs $G_n^t\in\mathcal{F}$ by computing the length of the Artinian rings
\[
\widehat{R(n,t)}\cong R(n,t)/(\overline{X_n})
\]
from Definition~\ref{widehatnotation}. We know by Corollary~\ref{xt} that $\overline{X_n}$ is a linear regular sequence for $R(n,t)$, which allows us to compute the multiplicity of the rings $R(n,t)$. As a corollary of Theorem~\ref{Hilbert Series mod reg seq}, which establishes the Hilbert function for $\widehat{R(n,t)}$, we obtain the multiplicity and regularity of $R(n,t)$.  We also develop an alternate graph-theoretic proof for the regularity of $R(n,t)$, which is included at the end of this section.  

We begin with a lemma establishing a vector space basis for $\widehat{R(n,t)}$, which we use extensively for our results.

\begin{lemma}\label{uniquerep}

The image of all squarefree monomials with only odd indices whose indices are at least four apart, together with the image of $1_k$, forms a vector space basis for $\widehat{R(n,t)}$  over $k$.

\end{lemma}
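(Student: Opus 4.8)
The plan is to prove the lemma by exhibiting a Gr\"obner basis. Concretely, I would show that $\{\widehat{s_1},\ldots,\widehat{s_{2n+1}}\}$ from Definition~\ref{widehati} is a Gr\"obner basis for $\widehat{I_{G_n^t}}$ with respect to the degree reverse lexicographic order on $\widehat{S(n)}=k[x_3,x_5,\ldots,x_{2n+3}]$ determined by $x_3>x_5>\cdots>x_{2n+3}$. First I would record the leading terms: since $J_k\le k$ by Definition~\ref{widehati}, in each binomial $\widehat{s_k}$ the first monomial involves only variables of index strictly smaller than those of the second monomial's largest-index variable, so degree reverse lexicographically $\init\widehat{s_1}=x_3^2$, $\init\widehat{s_{2i}}=x_{2i+1}x_{2i+3}$ and $\init\widehat{s_{2i+1}}=x_{2i+3}^2$ for $1\le i<n$, and $\init\widehat{s_{2n}}=x_{2n+1}x_{2n+3}$, $\init\widehat{s_{2n+1}}=x_{2n+3}^2$. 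Granting the Gr\"obner basis claim, the initial ideal is $\big(\{x_{2i+3}^2\mid 0\le i\le n\}\cup\{x_{2i+1}x_{2i+3}\mid 1\le i\le n\}\big)$; a monomial avoids every $x_{2i+3}^2$ exactly when it is squarefree, and then it avoids every $x_{2i+1}x_{2i+3}$ exactly when no two of its variables have consecutive odd indices, i.e.\ all its indices are pairwise at least four apart. So the standard monomials are precisely the claimed set, and the lemma follows from the standard fact that the residues of the standard monomials form a $k$-basis of $\widehat{S(n)}/\widehat{I_{G_n^t}}=\widehat{R(n,t)}$ (see, e.g., \cite{cox-little-oshea}).

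The Gr\"obner basis verification is a Buchberger computation entirely parallel to that in Lemma~\ref{Grobner}. Because the leading terms form, up to the two exceptional end relations, the list $x_3^2,x_3x_5,x_5^2,x_5x_7,x_7^2,\ldots$, the only $S$-pairs with non-coprime leading terms are $S(\widehat{s_k},\widehat{s_{k+1}})$ for $1\le k\le 2n$ and $S(\widehat{s_{2i}},\widehat{s_{2i+2}})$ for $1\le i\le n-1$; all remaining pairs vanish by Buchberger's first criterion. For each relevant pair one expands the $S$-polynomial and rewrites it as an $\widehat{S(n)}$-combination of the $\widehat{s_j}$: for instance $S(\widehat{s_{2i}},\widehat{s_{2i+1}})=-x_{J_{2i}}x_{2i+3}x_{2i+5}+x_{2i+1}x_{J_{2i+1}}x_{2i+5}$ reduces, after substituting for $x_{2i+1}^2$ from $\widehat{s_{2i-1}}$, to $x_{2i+5}\,\widehat{s_{2i-1}}$ when $J_{2i+1}=2i+1$, and to $x_{2i+5}\,\widehat{s_{2i-2}}$ when $J_{2i+1}=2i-1$; the case $S(\widehat{s_{2i}},\widehat{s_{2i+2}})$ is handled similarly. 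Throughout one uses the recursive relations among the $J_k$ (equivalently the entries of $t$) from Definition~\ref{widehati}, together with $x_{J_1}=x_{J_2}=0$, $J_3=3$, and treats the small indices $i=1$ and the boundary indices near $i=n$ (where some $\widehat{s_k}$ are monomials) separately. The main obstacle is exactly this bookkeeping — the split on the values of $t$ in the two binomial $S$-pair families — but since the recursion governing the $J_k$ is the one that already controls Lemma~\ref{Grobner}, no genuinely new difficulty arises, and every $S$-polynomial reduces to zero.

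Combining the two steps finishes the proof. I would also mention an alternative to the Gr\"obner step that avoids the Buchberger bookkeeping: since $R(n,t)$ is Cohen--Macaulay (Corollary~\ref{CM}) and $\init I_{G_n^t}$ is squarefree (Corollary~\ref{initial}), the theorem of \cite{conca2018squarefree} shows $S(n)/\init I_{G_n^t}$ is Cohen--Macaulay, so the system of parameters $X_n$ of Remark~\ref{xtsop} is a regular sequence on it; then $\dim_k\widehat{R(n,t)}_i$ equals the $i$th $h$-vector entry of $R(n,t)$, which equals that of $S(n)/\init I_{G_n^t}$, which equals $\dim_k\big((S(n)/\init I_{G_n^t})/(X_n)\big)_i$, and this last ring was computed in the proof of Theorem~\ref{dimension} to be $k[x_3,\ldots,x_{2n+3}]/(x_3^2,\{x_{2i+1}x_{2i+3},x_{2i+3}^2\mid 1\le i\le n\})$, whose degree-$i$ standard monomials are exactly the claimed set. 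Pairing this count with the elementary observation that the claimed monomials span $\widehat{R(n,t)}$ — every non-standard monomial being rewritable via the relations $\widehat{s_k}$ into a combination of strictly smaller monomials, with termination since each degree is finite-dimensional — also yields the lemma.
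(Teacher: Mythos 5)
Your main argument is essentially identical to the paper's proof: show that the $\widehat{s_k}$ form a Gr\"obner basis for $\widehat{I_{G_n^t}}$ under degree reverse lexicographic order via Buchberger's criterion, read off the initial ideal $(x_3^2,\{x_{2i+1}x_{2i+3},x_{2i+3}^2\mid 1\leq i\leq n\})$, identify its standard monomials as the claimed squarefree monomials with odd indices at least four apart, and conclude by Macaulay's Basis Theorem; your leading-term identification and sample $S$-pair reductions check out. The alternative route you sketch (Cohen--Macaulayness of $S(n)/\init I_{G_n^t}$ via \cite{conca2018squarefree}, matching $h$-vectors, plus a spanning argument) is a correct variant but not the one the paper uses.
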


\begin{proof}

We recall for the reader the definition of $\widehat{R(n,t)}$ and then find the initial ideal of $\widehat{I_{G_n^t}}$ and use Macaulay's Basis Theorem to show that the desired representatives form a basis for $\widehat{R(n,t)}$ as a vector space over $k$.

From Definition~\ref{widehatnotation}, we have
\begin{equation*}
    R(n,t)/(\overline{X_n})\cong\widehat{R(n,t)}= \widehat{S(n)}/\widehat{I_{G_n^t}},
\end{equation*}
where
\begin{equation*}
    \widehat{S(n)}=k[x_3,x_5,\ldots,x_{2n+1},x_{2n+3}].
\end{equation*}
By Definition~\ref{widehati}, for $1\leq i< n$ the ideal $\widehat{I_{G_n^t}}$ is generated by 
\begin{eqnarray*}
    \widehat{s_1}&=&x_{3}^2-x_{J_{1}}x_{5}\\
    \widehat{s_{2i}}&=&x_{2i+1}x_{2i+3}-x_{J_{2i}}x_{2i+5}\\
    \widehat{s_{2i+1}}&=&x_{2i+3}^2-x_{J_{2i+1}}x_{2i+5}\\
    \widehat{s_{2n}}&=&x_{2n+1}x_{2n+3}\\
    \widehat{s_{2n+1}}&=&x_{2n+3}^2,
\end{eqnarray*}
where $x_{J_1}=x_{J_2}=0$, $J_3=3$, and for $2\leq i<n$, 
    \begin{eqnarray*}
        t_{i+1}&=&0\iff x_{J_{2i}}=x_{J_{2i-2}}\iff J_{2i+1}=2i-1\\
       t_{i+1}&=&1\iff x_{J_{2i}}=x_{J_{2i-1}}\iff J_{2i+1}=2i+1.
    \end{eqnarray*}

We first show that the image of the monomials with the desired property is a basis in the quotient of $\widehat{S(n)}$ by the initial ideal $\init \widehat{I_{G_n^t}}$.  By Macaulay's Basis Theorem, which is Theorem 1.5.7 in \cite{cca1-kreuzer-robbiano}, the image of these monomials in $\widehat{R(n,t)}$ is also a basis. 

To find the initial ideal of $\widehat{I_{G_n^t}}$, we establish that the given generators $\widehat{s_k}$ are a  Gr\"{o}bner basis for $\widehat{I_{G_n^t}}$ with respect to the degree reverse lexicographic order $>$.  This is a relatively straightforward computation using Buchberger's Criterion, with separate cases for when $x_{J_k}=0$.

If we adopt $S$-polynomial notation $S_{i,j}$ for the $S$-polynomial of $\widehat{s_i}$ and $\widehat{s_j}$, then the cases to consider are 
\[
S_{1,2},S_{2,3},S_{2,4},S_{2n-2,2n},S_{2n-1,2n},S_{2n,2n+1}
\]
\[
S_{2i-1,2i}\text{ for }1<i<n
\]
\[
S_{2i,2i+1}\text{ for }1<i<n
\]
\[
S_{2i,2i+2}\text{ for }1<i<n-1.
\]
To give a flavor of the computation involved, we show the case $S_{2i,2i+2}$ for $1<i<n-1$, and leave the remaining cases to the reader.  We show in each subcase that $S_{2i,2i+2}$ is equal to zero or to a sum of basis elements with coefficients in $\widehat{S(n)}$, so that the reduced form of $S_{2i,2i+2}$ is zero in each subcase.  We have
\begin{eqnarray*}
S_{2i,2i+2}&=&x_{2i+5}(x_{2i+1}x_{2i+3}-x_{J_{2i}}x_{2i+5})-x_{2i+1}(x_{2i+3}x_{2i+5}-x_{J_{2i+2}}x_{2i+7})\\
&=&-x_{J_{2i}}x_{2i+5}^2+x_{2i+1}x_{J_{2i+2}}x_{2i+7}
\end{eqnarray*}

\vspace{12pt}

Case 1: If $t_{i+2}=0$, then $x_{J_{2i+2}}=x_{J_{2i}}$ and $J_{2i+3}=2i+1$, so we have
\begin{eqnarray*}
S_{2i,2i+2}&=&-x_{J_{2i}}x_{2i+5}^2+x_{J_{2i}}x_{2i+1}x_{2i+7}\\
&=&-x_{J_{2i}}\widehat{s_{2i+3}}
\end{eqnarray*}

We note that if $x_{J_{2i}}=x_{J_{2i+2}}=0$, then $S_{2i,2i+2}=0$.

\vspace{12pt}

Case 2: If $t_{i+2}=1$, then $x_{J_{2i+2}}=x_{J_{2i+1}}$ and $J_{2i+3}=2i+3$, so we have
\begin{eqnarray*}
S_{2i,2i+2}&=&-x_{J_{2i}}x_{2i+5}^2+x_{2i+1}x_{J_{2i+1}}x_{2i+7}.
\end{eqnarray*}

\begin{quote}
Case 2.1: If in addition $t_{i+1}=0$, then $x_{J_{2i}}=x_{J_{2i-2}}$ and $J_{2i+1}=2i-1$, so we have
\begin{eqnarray*}
S_{2i,2i+2}&=&-x_{J_{2i-2}}x_{2i+5}^2+x_{2i+1}x_{2i-1}x_{2i+7}\\
&=&-x_{J_{2i-2}}(x_{2i+5}^2-x_{J_{2i+3}}x_{2i+7})+x_{2i+7}(x_{2i-1}x_{2i+1}-x_{J_{2i-2}}x_{2i+3})\\
&=&-x_{J_{2i-2}}\widehat{s_{2i+3}}+x_{2i+7}\widehat{s_{2i-2}}.
\end{eqnarray*}

We note that if $x_{J_{2i}}=x_{J_{2i-2}}=0$, then $S_{2i,2i+2}=x_{2i+7}\widehat{s_{2i-2}}$.

\vspace{12pt}

\noindent Case 2.2: If in addition $t_{i+1}=1$, then $x_{J_{2i}}=x_{J_{2i-1}}$ and $J_{2i+1}=2i+1$, so we have
\begin{eqnarray*}
S_{2i,2i+2}&=&-x_{J_{2i-1}}x_{2i+5}^2+x_{2i+1}^2x_{2i+7}\\
&=&-x_{J_{2i-1}}(x_{2i+5}^2-x_{J_{2i+3}}x_{2i+7})+x_{2i+7}(x_{2i+1}^2-x_{J_{2i-1}}x_{2i+3})\\
&=&-x_{J_{2i-1}}\widehat{s_{2i+3}}+x_{2i+7}\widehat{s_{2i-1}}.
\end{eqnarray*}
\end{quote}

This concludes the case $S_{2i,2i+2}$ for $1<i<n-1$.  The remaining cases are similar.

Then the given generators $\widehat{s_k}$ are a Gr\"{o}bner Basis for $\widehat{I_{G_n^t}}$, so that the initial ideal is
\[
\init(\widehat{I_{G_n^t}})=(x_3^2,\{x_{2i+1}x_{2i+3},x_{2i+3}^2 \mid 1\leq i \leq n\} )
\]
in the ring $\widehat{S(n)}=k[x_3,x_5,\ldots,x_{2n+1},x_{2n+3}]$. Since $\init(\widehat{I_{G_n^t}})$ consists precisely of all squares of variables in $\widehat{S(n)}$ and all degree two products of variables whose indices differ by exactly two, it follows that the image of the squarefree monomials whose indices are at least four apart, together with the image of $1_k$, forms a basis for $\frac{\widehat{S(n)}}{\init\widehat{I_{G_n^t}}}$.  By Macaulay's Basis Theorem, the image of these monomials in  $\widehat{R(n,t)}=\frac{\widehat{S(n)}}{\widehat{I_{G_n^t}}}$ is also a basis. 

\end{proof}

We use the lemma above to establish facts about the vector space dimensions of degree $i$ pieces of $\widehat{R(n,t)}$, which are applied further below to establish length and multiplicity.

\begin{notation}\label{d}
    Throughout this section, we use $d_{n,i}:=\dim_k (\widehat{R(n,t)})_{i}$ for the vector space dimension of the degree $i$ piece of $\widehat{R(n,t)}$, that is, for the $i$th coefficient in the Hilbert series of $\widehat{R(n,t)}$.  By Lemma~\ref{uniquerep}, these are independent of $t$.
\end{notation}

We establish a recursive relationship between these dimensions by introducing a short exact sequence of vector spaces.

\begin{lemma}\label{recursion}
    For $n\geq 2$ and $i\geq 1$, the vector space dimension $d_{n,i}=\dim_k (\widehat{R(n,t)})_{i}$ satisfies the recursive relationship 
    \[
    d_{n,i}=d_{n-1,i}+d_{n-2,i-1}.
    \]
\end{lemma}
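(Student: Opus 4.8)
The plan is to exhibit an explicit short exact sequence of graded $k$-vector spaces relating the Artinian rings $\widehat{R(n,t)}$, $\widehat{R(n-1,t')}$, and $\widehat{R(n-2,t'')}$ (for appropriate truncations $t',t''$ of $t$), and then read off the stated recursion degree by degree. By Notation~\ref{d} the dimensions $d_{n,i}$ do not depend on $t$, so it suffices to work with any convenient choice of $t$; the natural one is the choice for which $\widehat{R(n-1,-)}$ and $\widehat{R(n-2,-)}$ appear as honest subquotients. The guiding principle is the vector space basis from Lemma~\ref{uniquerep}: $\widehat{R(n,t)}$ has as a $k$-basis the images of $1$ together with all squarefree monomials in the odd-indexed variables $x_3,x_5,\dots,x_{2n+3}$ whose indices are pairwise at least four apart. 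The largest variable available is $x_{2n+3}$, and the recursion should come from splitting this basis according to whether or not a monomial is divisible by $x_{2n+3}$.

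Concretely, first I would set up the map of rings. Since $\widehat{S(n)}=k[x_3,x_5,\dots,x_{2n+3}]$ contains $\widehat{S(n-1)}=k[x_3,\dots,x_{2n+1}]$ as a subring, and (by Lemma~\ref{uniquerep}, or rather its proof identifying $\init\widehat{I_{G_n^t}}$) the generators of $\widehat{I_{G_n^t}}$ involving only $x_3,\dots,x_{2n+1}$ are exactly those of $\widehat{I_{G_{n-1}^{\widehat t}}}$, there is a natural surjection $\pi\colon \widehat{R(n,t)} \twoheadrightarrow \widehat{R(n-1,\widehat t)}$ obtained by killing $x_{2n+3}$ (i.e. quotienting by the ideal $(\bar x_{2n+3})$). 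On the basis level $\pi$ simply sends the monomials not divisible by $x_{2n+3}$ to the corresponding basis elements of $\widehat{R(n-1,\widehat t)}$ and sends those divisible by $x_{2n+3}$ to $0$, so $\pi$ is visibly surjective with kernel spanned (as a $k$-vector space) by the images of basis monomials divisible by $x_{2n+3}$.

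Next I would identify that kernel. A basis monomial of $\widehat{R(n,t)}$ divisible by $x_{2n+3}$ has the form $x_{2n+3}\cdot m$ where $m$ is either $1$ or a squarefree odd-index monomial whose indices are pairwise at least four apart \emph{and} all at most $2n-1$ (since the next index down from $2n+3$ allowed is $2n-1$). Those $m$ are exactly the basis monomials of $\widehat{R(n-2,\widehat{\widehat t}\,)}$, whose polynomial ring is $k[x_3,\dots,x_{2n-1}]$. Thus multiplication by $x_{2n+3}$ induces a $k$-linear isomorphism from $\widehat{R(n-2,\widehat{\widehat t}\,)}$ onto $\ker\pi$, but shifted in degree by one: a monomial in degree $i-1$ of $\widehat{R(n-2)}$ maps to a monomial in degree $i$ of $\widehat{R(n,t)}$. (One must check this map is well defined, i.e. that $x_{2n+3}\cdot \widehat{I_{G_{n-2}}} \subseteq \widehat{I_{G_n^t}}$, which follows because $\widehat{s_{2n}}=x_{2n+1}x_{2n+3}$ and $\widehat{s_{2n+1}}=x_{2n+3}^2$ lie in the ideal and hence $x_{2n+3}$ times any variable times the lower generators stays inside; more cleanly, it is immediate from the basis description that the linear map on bases respects the relations.) Assembling, we get a short exact sequence of graded vector spaces
\[
0 \longrightarrow \widehat{R(n-2,\widehat{\widehat t}\,)}(-1) \longrightarrow \widehat{R(n,t)} \overset{\pi}{\longrightarrow} \widehat{R(n-1,\widehat t\,)} \longrightarrow 0,
\]
and comparing dimensions in degree $i$ gives $d_{n,i}=d_{n-1,i}+d_{n-2,i-1}$, using that $d_{n,i}$ is independent of $t$ by Notation~\ref{d}.

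The main obstacle is the well-definedness and exactness at the middle — i.e. verifying that the kernel of $\pi$ is \emph{precisely} the span of the $x_{2n+3}$-divisible basis monomials and that multiplication by $x_{2n+3}$ gives a genuine isomorphism onto it rather than something smaller. The cleanest route is to avoid ring-theoretic subtleties entirely and argue purely on the explicit monomial bases furnished by Lemma~\ref{uniquerep}: the basis of $\widehat{R(n,t)}$ partitions into monomials not involving $x_{2n+3}$ (which biject with the basis of $\widehat{R(n-1)}$) and monomials of the form $x_{2n+3}m$ (which biject, with a degree shift of $1$, with the basis of $\widehat{R(n-2)}$), because "pairwise indices at least four apart with largest allowed index $2n+3$" forces any companion index to be $\le 2n-1$. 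Both bijections are evidently compatible with the grading up to the stated shift, which yields the dimension count directly without needing the exact sequence to be one of modules. I would present the short exact sequence for conceptual clarity but fall back on the basis bijection for the actual verification.
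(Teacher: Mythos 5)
Your proposal is correct and takes essentially the same approach as the paper: the paper also produces the short exact sequence of graded vector spaces $0\to(\widehat{R(n-2)})_{i-1}\xrightarrow{\,x_{2n+3}\,}(\widehat{R(n)})_i\to(\widehat{R(n-1)})_i\to 0$, with both maps defined on the monomial basis of Lemma~\ref{uniquerep} and justified exactly by the partition of basis monomials according to divisibility by $x_{2n+3}$. Your ``fallback'' to the pure basis bijection is in fact precisely the paper's argument, so no further verification is needed.
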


\begin{proof}
We use the vector space basis defined in Lemma~\ref{uniquerep}.  We note that the basis elements described are actually monomial representatives (which do not depend on $t$) of equivalence classes (which do depend on $t$), but we suppress this and speak as if they are monomials, not depending on $t$. We then take the liberty of suppressing $t$ in what follows, for convenience.  We recall for the reader that 
\begin{eqnarray*}
    \widehat{S(n)}&=&k[x_3,x_5,\ldots,x_{2n-3},x_{2n-1},x_{2n+1},x_{2n+3}]\\
    \widehat{S(n-1)}&=&k[x_3,x_5,\ldots,x_{2n-3},x_{2n-1},x_{2n+1}]\\
    \widehat{S(n-2)}&=&k[x_3,x_5,\ldots,x_{2n-3},x_{2n-1}]
\end{eqnarray*}

Let $x_{2n+3}:(\widehat{R(n-2)})_{i-1}\to (\widehat{R(n)})_{i}$ be multiplication by $x_{2n+3}$, and let\\ $\widehat{x_{2n+3}}:(\widehat{R(n)})_{i}\to (\widehat{R(n-1)})_{i}$ be defined for a basis element $b$ by 
\[
\widehat{x_{2n+3}}(b)=\begin{cases}
                            b &\text{if }x_{2n+3}\nmid b\\
                            0 &\text{if }x_{2n+3}\mid b.
                        \end{cases}
\]
We note that these vector space maps are well-defined, since $1_k$ or a squarefree monomial with odd indices at least four apart has an output of 0, $1$, or a monomial with the same properties.  The following sequence of vector spaces is exact

\[
\xymatrix{0\ar[r] & (\widehat{R(n-2)})_{i-1}\ar[r]^{x_{2n+3}} & (\widehat{R(n)})_{i}\ar[r]^{\widehat{x_{2n+3}}} & (\widehat{R(n-1)})_{i}\ar[r] & 0,}
\]
so that
\[
d_{n,i}=d_{n-1,i}+d_{n-2,i-1}.
\]
\end{proof}

Applying Lemma~\ref{recursion} and induction, we achieve the following closed formula for the coefficients of the Hilbert series of $\widehat{R(n,t)}$.  

\begin{thm}\label{Hilbert Series mod reg seq}

If $R(n,t)=S(n)/I_{G_n^t}$ and $\widehat{R(n,t)}\cong R(n,t)/(\overline{X_n})$, we have 
\begin{equation*}
    {\displaystyle \dim_k(\widehat{R(n,t)})_{i}=\begin{cases}
1 & i=0\\
{\displaystyle \frac{1}{i!}\prod_{j=1}^{i}(n+j-2(i-1))} & i\geq 1.
\end{cases}}
\end{equation*}
In particular, $\dim_k(\widehat{R(n,t)})_{i}=0$ when $i>n/2+1$.

\end{thm}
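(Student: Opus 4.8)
I would prove the formula by induction on $n$, via the recursion of Lemma~\ref{recursion}. The first step is to rewrite the claimed value in binomial form: setting $m=n-i+2$, the factors $n+j-2(i-1)$ for $j=1,\dots,i$ are exactly $m-i+1,\dots,m-1,m$, so
\[
\frac{1}{i!}\prod_{j=1}^{i}\bigl(n+j-2(i-1)\bigr)=\frac{1}{i!}\,m(m-1)\cdots(m-i+1)=\binom{n-i+2}{i},
\]
where I read $\binom{a}{b}$ as $0$ whenever $a<b$. With this reading the ``in particular'' clause is automatic, since $i>n/2+1$ is equivalent to $n-i+2<i$. So it suffices to establish $d_{n,i}=\binom{n-i+2}{i}$ for all $n\ge 0$ and $i\ge 1$, together with $d_{n,0}=1$ (which holds because $\widehat{R(n,t)}$ is a standard graded $k$-algebra with degree-zero part $k$).

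Next I would dispatch the base cases $n=0$ and $n=1$ using the explicit presentations recorded in the proof of Proposition~\ref{sopedgering}: $\widehat{R(0,t)}\cong k[x_3]/(x_3^2)$ has Hilbert function $1,1,0,0,\dots$ and $\widehat{R(1,t)}\cong k[x_3,x_5]/(x_3^2,x_3x_5,x_5^2)$ has Hilbert function $1,2,0,0,\dots$; one checks directly that $\binom{2-i}{i}$ and $\binom{3-i}{i}$ reproduce these. For the inductive step I would fix $n\ge 2$, assume the formula for $n-1$ and $n-2$, and apply Lemma~\ref{recursion}: for $i\ge 1$,
\[
d_{n,i}=d_{n-1,i}+d_{n-2,i-1}=\binom{n-i+1}{i}+\binom{n-i+1}{i-1}=\binom{n-i+2}{i},
\]
the last equality being Pascal's identity, which stays valid across the degenerate range under the convention $\binom{a}{b}=0$ for $a<b$; the case $i=0$ is handled by $d_{n,0}=1$ as above. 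This closes the induction.

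I do not expect a genuine obstacle here — once Lemma~\ref{uniquerep} and Lemma~\ref{recursion} are in place the argument is essentially bookkeeping — but the point that needs care is the interpretation of the displayed product for small $n$ (where the raw product can be negative) and the matching of the base cases, which is exactly why passing to binomial coefficients with the vanishing convention is the cleanest route. As a sanity check I would also note the purely combinatorial reading: by Lemma~\ref{uniquerep}, after the reindexing $x_{2k+3}\mapsto k$ a degree-$i$ basis element is a size-$i$ subset of $\{0,1,\dots,n\}$ with all elements pairwise at distance at least $2$, and the number of such subsets is the classical count $\binom{(n+1)-(i-1)}{i}=\binom{n-i+2}{i}$; this yields the theorem directly and independently of the recursion, and in particular makes the vanishing for $i>n/2+1$ evident since no such subset then exists.
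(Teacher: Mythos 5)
Your argument is correct, and its core is the same as the paper's: the same base cases $n\in\{0,1\}$, the same recursion from Lemma~\ref{recursion}, and the same induction; the only difference in the inductive step is that you package the product as the binomial coefficient $\binom{n-i+2}{i}$ and invoke Pascal's identity, where the paper pulls out the factors $N-2(I-1)$ and $I$ by hand — these are literally the same computation. Two points are worth separating out. First, your closing combinatorial remark is in fact a complete, self-contained proof that the paper does not give: by Lemma~\ref{uniquerep} the degree-$i$ basis elements correspond, after reindexing $x_{2k+3}\mapsto k$, to $i$-subsets of $\{0,\dots,n\}$ with pairwise gaps at least $2$, and the classical count of such subsets is $\binom{n-i+2}{i}$; this bypasses the recursion entirely and makes the vanishing for $i>n/2+1$ transparent. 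Second, you are right that the only delicate point is the passage from the displayed product to $\binom{n-i+2}{i}$ under the convention $\binom{a}{b}=0$ for $a<b$: the two expressions agree exactly when $n-i+2\ge 0$ (in the range $n/2+1<i\le n+2$ the product acquires a zero factor), but for $i>n+2$ all $i$ factors of the raw product are negative, so it is a nonzero number of sign $(-1)^i$ — for instance $n=0$, $i=3$ gives $\frac{1}{3!}(-3)(-2)(-1)=-1$ — while the true dimension is $0$. So what you have actually proved is the (correct) binomial form of the statement; the displayed product formula should be read as valid for $i\le n+2$, with the ``in particular'' clause governing beyond that. This is a blemish in the literal statement (and in the base-case table of the paper's proof, which asserts $d_{0,i}=0$ matches the formula for all $i\ge 2$) rather than a gap in your argument, and your choice to work with the binomial form handles it correctly.
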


\vspace{0.25cm}

\begin{proof}
We begin with the proof of the last statement, and note that throughout the following proof we use Notation~\ref{d}. When $n\geq 0$ is even, $i=n/2+2$, and $j=2(\leq i)$, we have a factor of zero, and when $n\geq 1$ is odd, $i=n/2+3/2$, and $j=1(<i)$, we have a factor of zero. Thus $d_{n,i}=0$ when $i>n/2+1$.

Now we establish the base cases $i,n\in\{0,1\}$, then proceed by induction. It is clear that $d_{n,0}=1$, generated by $1_k$. By Lemma~\ref{uniquerep} and by the fact that $\widehat{R(n,t)}$ is a graded quotient, every nonzero element of positive degree $i$ can be represented uniquely as a sum of degree $i$ squarefree monomials with odd indices whose indices are at least four apart. Then $(\widehat{R(n,t)})_{1}$ is generated by the images of all the odd variables
\[
x_3,x_{2(1)+3},\ldots,x_{2n+3}
\]
in $S(n)$, so that  
\[
d_{n,1}=n+1=\frac{1}{1!}\prod_{j=1}^{1}(n+j-2(1-1))
\]
matches the given formula.

Now we establish the base cases $n=0$ and $n=1$ for all $i$.
We recognize that the first monomial of degree two with odd indices at least four apart is $x_3x_7$, which does not exist until $n=2$, so we have

\begin{equation*}
d_{0,i}=\begin{cases}
1 & i=0\\
1 & i=1\\
0 & \text{else}
\end{cases}
=
\begin{cases}
1 & i= 0\\
{\displaystyle \frac{1}{i!}\prod_{j=1}^{i}(j-2(i-1))} & i \geq 1
\end{cases}
\end{equation*}
and 
\begin{equation*}
d_{1,i}=\begin{cases}
1 & i=0\\
2 & i=1\\
0 & \text{else}
\end{cases}
=
\begin{cases}
1 & i=0\\
{\displaystyle \frac{1}{i!}\prod_{j=1}^{i}(1+j-2(i-1))} & i \geq 1,
\end{cases}
\end{equation*}
which match the given formula.

This gives us the following table of base cases for $d_{n,i}$, which match the given formula:
\begin{center}
\begin{tabular}{|c|c|c|c|c|c|c|c|c|c|}
    \hline
    $n\setminus i$&0&1&2&3&4&5&6&7&$\cdots$\\
    \hline
    0&1&1&0&0&0&0&0&0&$\cdots$\\
    \hline
    1&1&2&0&0&0&0&0&0&$\cdots$\\
    \hline
    2&1&3&&&&&&&\\
    \hline
    3&1&4&&&&&&&\\
    \hline
    4&1&5&&&&&&&\\
    \hline
    $\vdots$&$\vdots$&$\vdots$&&&&&&&\\
    \hline
\end{tabular}
\end{center}
We recall by Lemma~\ref{recursion} that we have the recursive relationship 
\[
d_{n,i}=d_{n-1,i}+d_{n-2,i-1}
\]
for $n\geq 2$ and $i\geq 1$.  We proceed by induction.  Suppose $N,I\geq 2$ and that the dimension formula holds for all $i$ when $n<N$. 
By our recursion and by induction, we have
\begin{eqnarray*}
d_{N,I} & = & d_{N-1,I}+d_{N-2,I-1}\\
 & = & \frac{1}{I!}\prod_{j=1}^{I}(N-1+j-2(I-1))+\frac{1}{(I-1)!}\prod_{j=1}^{I-1}(N-2+j-2(I-2))\\
 & = & \frac{1}{I!}(N-2(I-1))\prod_{j=1}^{I-1}(N+j-2(I-1))+\frac{1}{I!}(I)\prod_{j=1}^{I-1}(N+j-2(I-1))\\
 & = & \frac{1}{I!}\prod_{j=1}^{I}(N+j-2(I-1)),
\end{eqnarray*}
as desired.
\end{proof}

\begin{rmk}\label{lengths and dim to note}
    We note from the proof of the theorem above a few facts for future reference.  
    By our base cases, we have $\ell(\widehat{R(0,t)})=1+1=2$ and $\ell(\widehat{R(1,t)})=1+2=3$.  Taking the Fibonacci sequence $F(n)$ with $F(0)=0$ and $F(1)=1$, we have $F(2)=1$, $F(3)=2$, and $F(4)=3$, so that 
    \begin{eqnarray*}
    \ell(\widehat{R(0,t)})&=&F(3)\\
    \ell(\widehat{R(1,t)})&=&F(4).
    \end{eqnarray*}
    These facts become useful in 
    Proposition~\ref{Fibonacci}.
\end{rmk}

We see in the following corollary that the regularity of $R(n,t)$ is $\left \lfloor n/2\right \rfloor+1$.  For an alternate proof of the regularity of $R(n,t)$ which uses different machinery and more graph-theoretic properties, see the end of this section.

\begin{cor}\label{regcor}
    For $G_n^t\in\mathcal{F}$,
    \begin{equation*}
    \reg R(n,t)=\left\lfloor n/2 \right \rfloor+1.
\end{equation*}
\end{cor}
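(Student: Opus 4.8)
The plan is to push the computation of $\reg R(n,t)$ down to the Artinian reduction $\widehat{R(n,t)}$, whose graded pieces are computed exactly in Theorem~\ref{Hilbert Series mod reg seq}, and then read off its top nonzero degree.

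First I would invoke Corollary~\ref{xt}: the linear forms $\overline{X_n}$ form a regular sequence on $R(n,t)$, of length $n+3=\dim R(n,t)$ by Theorem~\ref{dimension}, and $\widehat{R(n,t)}=R(n,t)/(\overline{X_n})$ is Artinian (Remark~\ref{art}). The key input is the standard fact that Castelnuovo--Mumford regularity is unchanged by quotienting a finitely generated graded module by a linear nonzerodivisor: if $\ell$ is a linear form that is $M$-regular then $\reg(M/\ell M)=\reg M$, which follows from the short exact sequence $0\to M(-1)\xrightarrow{\ell} M\to M/\ell M\to 0$ and the induced long exact sequence in local cohomology (and $\reg$ of a module is independent of the ambient polynomial ring). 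Applying this successively along $\overline{X_n}$ gives $\reg R(n,t)=\reg\widehat{R(n,t)}$.

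Next I would use that $\widehat{R(n,t)}$ is a standard graded Artinian $k$-algebra, so its local cohomology is concentrated in cohomological degree $0$, with $H^0_{\mathfrak m}(\widehat{R(n,t)})=\widehat{R(n,t)}$; hence $\reg\widehat{R(n,t)}=\max\{\,i:(\widehat{R(n,t)})_i\neq 0\,\}=\max\{\,i:d_{n,i}\neq 0\,\}$ in the notation of Theorem~\ref{Hilbert Series mod reg seq}. The last assertion of that theorem, namely $d_{n,i}=0$ for $i>n/2+1$, already yields the upper bound $\reg R(n,t)\le\lfloor n/2\rfloor+1$.

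It remains to show the bound is attained, i.e. $d_{n,\,\lfloor n/2\rfloor+1}\neq 0$, by substituting into $d_{n,i}=\frac{1}{i!}\prod_{j=1}^{i}\bigl(n+j-2(i-1)\bigr)$. Writing $i=\lfloor n/2\rfloor+1$: if $n=2m$ then $i=m+1$ and $\prod_{j=1}^{m+1}\bigl(n+j-2(i-1)\bigr)=\prod_{j=1}^{m+1}j=(m+1)!$, so $d_{n,i}=1$; if $n=2m+1$ then $i=m+1$ and $\prod_{j=1}^{m+1}\bigl(n+j-2(i-1)\bigr)=\prod_{j=1}^{m+1}(j+1)=(m+2)!$, so $d_{n,i}=m+2$. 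In either case $d_{n,\,\lfloor n/2\rfloor+1}\neq 0$, whence $\reg R(n,t)=\lfloor n/2\rfloor+1$. I do not expect a genuine obstacle here; the only steps needing care are citing the (well-known) invariance of regularity under quotient by a linear regular sequence and the small bit of bookkeeping in the extremal evaluation of the product formula.
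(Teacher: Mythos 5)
Your proposal is correct and follows essentially the same route as the paper: pass to the Artinian reduction $\widehat{R(n,t)}$ via the linear regular sequence $\overline{X_n}$, identify $\reg\widehat{R(n,t)}$ with its top nonzero degree, get the upper bound from the vanishing statement in Theorem~\ref{Hilbert Series mod reg seq}, and check $d_{n,\lfloor n/2\rfloor+1}\neq 0$ from the product formula (the paper merely observes no factor vanishes, while you evaluate the product explicitly, obtaining $1$ for $n$ even and $m+2$ for $n=2m+1$ --- both computations are correct).
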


\begin{proof}
We show that $\reg R(n,t)$ is equal to the top nonzero degree of $\widehat{R(n,t)}$ and that this value agrees with the above.  Since $\widehat{R(n,t)}$ is Artinian by Remark~\ref{art}, it is clear that $\reg \widehat{R(n,t)}$ is the top nonzero degree of $\widehat{R(n,t)}$.  By Theorem~\ref{Hilbert Series mod reg seq}, we know the top nonzero degree is $N$ for some $N\leq n/2+1$, so that $N\leq \left \lfloor n/2\right\rfloor+1$. In fact, the top nonzero degree is $\left\lfloor n/2\right\rfloor+1$, provided $d_{n,\left \lfloor n/2\right \rfloor+1}\neq 0$. 
The only way we have a factor of zero in 
\[
d_{n, \lfloor n/2 \rfloor+1}={\displaystyle \frac{1}{(\lfloor n/2 \rfloor+1)!}\prod_{j=1}^{\lfloor n/2 \rfloor+1}(n+j-2(\lfloor n/2 \rfloor))}
\]
is if $n+j-2(\lfloor n/2\rfloor)=0$, which means $j/2= \lfloor n/2 \rfloor -n/2$.  This can only happen when $j<1$, but $j\geq 1$, so we conclude that $d_{n,\left \lfloor n/2\right \rfloor+1}\neq 0$. Since $\widehat{R(n,t)}$ is an Artinian quotient of $R(n,t)$ by a linear regular sequence, we conclude that
\[
\reg R(n,t)=\reg \widehat{R(n,t)}=\left \lfloor n/2\right \rfloor+1.
\] 
\end{proof}

In the following, we first compute the lengths of the dimension zero rings $\widehat{R(n,t)}$, and then show a closed form for the multiplicity of our original rings $R(n,t)$ by using a Fibonacci relationship between the lengths of the rings $\widehat{R(n,t)}$ and applying Binet's formula for $F(n)$, the $n$th number in the Fibonacci sequence:
\[
    F(n)=\frac{(1+\sqrt{5})^{n}-(1-\sqrt{5})^{n}}{2^{n}\sqrt{5}}.
\]

In the theorem and corollaries which follow, we suppress $t$ for convenience, since the statements are independent of $t$.

\begin{prop}\label{Fibonacci}
The lengths of the rings $\widehat{R(n)}$ satisfy the recursive formula
\[
\ell(\widehat{R(n)})=\ell(\widehat{R(n-1)})+\ell(\widehat{R(n-2)})
\]
for $n\geq 2$.
Consequently, if $F(n)$ is the Fibonacci sequence, with $F(0)=0$
and $F(1)=1$, then 
    \[
    \ell(\widehat{R(n)})=F\left(n+3\right)=\frac{(1+\sqrt{5})^{n+3}-(1-\sqrt{5})^{n+3}}{2^{n+3}\sqrt{5}}.
    \]

\end{prop}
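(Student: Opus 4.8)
The plan is to pass from the recursion on graded dimensions in Lemma~\ref{recursion} to a recursion on \emph{lengths} by summing over degrees, and then to identify the resulting sequence with a shifted Fibonacci sequence via its initial values.

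First I would recall that since $\widehat{R(n)}$ is Artinian (Remark~\ref{art}), its length is the finite sum of the dimensions of its graded components, $\ell(\widehat{R(n)})=\sum_{i\geq 0}d_{n,i}$, in the notation of Notation~\ref{d}. The next step is to sum the identity $d_{n,i}=d_{n-1,i}+d_{n-2,i-1}$ of Lemma~\ref{recursion} over all $i\geq 1$ and to add the degree-zero contribution separately, which lies outside the range of that lemma. Since $d_{n,0}=1=d_{n-1,0}$ for every $n$, I obtain, after reindexing the second sum by $j=i-1$,
\[
\ell(\widehat{R(n)})=d_{n,0}+\sum_{i\geq 1}\bigl(d_{n-1,i}+d_{n-2,i-1}\bigr)=\sum_{i\geq 0}d_{n-1,i}+\sum_{j\geq 0}d_{n-2,j}=\ell(\widehat{R(n-1)})+\ell(\widehat{R(n-2)}),
\]
valid for $n\geq 2$, exactly the range in which Lemma~\ref{recursion} applies. (Alternatively, one could sum the closed-form formula of Theorem~\ref{Hilbert Series mod reg seq} directly, but this amounts to proving a diagonal-Pascal identity for Fibonacci numbers and is less clean.)

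With the recursion in hand, I would invoke the base cases computed in Remark~\ref{lengths and dim to note}, namely $\ell(\widehat{R(0)})=2=F(3)$ and $\ell(\widehat{R(1)})=3=F(4)$. Since the sequences $\ell(\widehat{R(n)})$ and $F(n+3)$ satisfy the same two-term recursion and agree at $n=0$ and $n=1$, a one-line induction gives $\ell(\widehat{R(n)})=F(n+3)$ for all $n\geq 0$. Finally, substituting $m=n+3$ into Binet's formula $F(m)=\bigl((1+\sqrt{5})^{m}-(1-\sqrt{5})^{m}\bigr)/(2^{m}\sqrt{5})$ yields the stated closed form.

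There is no serious obstacle here: the only points requiring care are that the sums over $i$ are genuinely finite (guaranteed by the Artinian property), that the degree-zero term must be handled outside the scope of Lemma~\ref{recursion}, and that the reindexing $j=i-1$ is carried out correctly. Everything after that is a routine induction once the length recursion and the two base values are in place.
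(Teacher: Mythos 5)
Your proposal is correct and follows essentially the same route as the paper: summing the recursion of Lemma~\ref{recursion} over degrees (handling the degree-zero term separately), invoking the base values from Remark~\ref{lengths and dim to note}, and concluding by induction and Binet's formula. The only cosmetic difference is that you justify finiteness of the sums via the Artinian property while the paper truncates explicitly at $i=\lfloor n/2\rfloor+1$ using Theorem~\ref{Hilbert Series mod reg seq}; both are fine.
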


\begin{proof}

Again, we use Notation~\ref{d}. By the recursive relationship from Lemma~\ref{recursion}, since $d_{n,0}=1$ in general, and since $d_{n,i}=0$
in general for $i> n/2 +1$ by Theorem~\ref{Hilbert Series mod reg seq}, we have for $n\geq 2$ that 
\begin{eqnarray*}
\ell(\widehat{R(n)})  =  \sum_{i=0}^{\left\lfloor n/2\right\rfloor +1}d_{n,i}
& = & 1+\sum_{i=1}^{\left\lfloor n/2\right\rfloor +1}\left(d_{n-1,i}+d_{n-2,i-1}\right)\\
& = & \sum_{i=0}^{\left\lfloor n/2\right\rfloor +1}d_{n-1,i}+\sum_{i=0}^{\left\lfloor (n-2)/2\right\rfloor +1}d_{n-2,i}\\
 & = & \ell(\widehat{R(n-1)})+\ell(\widehat{R(n-2)}).
\end{eqnarray*}

Now we show the second statement. For our base cases, we see from Remark~\ref{lengths and dim to note} that  $\ell(\widehat{R(0)})=F(3)=F(0+3)$ and that $\ell(\widehat{R(1)})=F(4)=F(1+3)$.

Now suppose that ${\displaystyle \ell(\widehat{R(n-1)})=F\left(n+2\right)}$
and ${\displaystyle \ell(\widehat{R(n-2)})=F\left(n+1\right)}$. \\ Then
we have 
\begin{equation*}
\ell(\widehat{R(n)})  =  \ell(\widehat{R(n-1)})+\ell(\widehat{R(n-2)})\\
  =  F\left(n+3\right),
\end{equation*}
as desired.  The closed form for $\ell(\widehat{R(n)})$ follows directly from Binet's formula for the Fibonacci sequence.
\end{proof}

\begin{cor}
    For $n\geq 2$, there is an equality of multiplicities
    \[
    e(\widehat{R(n)})=e(\widehat{R(n-1)})+e(\widehat{R(n-2)}).
    \]
\end{cor}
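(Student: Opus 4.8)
The plan is to reduce the statement to Proposition~\ref{Fibonacci} via the standard identification of multiplicity with length for a zero-dimensional ring. First I would recall that by Remark~\ref{art} the ring $\widehat{R(n,t)}$ is Artinian, hence of Krull dimension zero; for a graded ring of dimension zero the Hilbert--Samuel multiplicity with respect to its maximal ideal coincides with the $k$-vector-space length, so $e(\widehat{R(n)})=\ell(\widehat{R(n)})$ for every $n\geq 0$ (and likewise with $n$ replaced by $n-1$ and $n-2$). Equivalently, one can read this off the Hilbert series computed in Theorem~\ref{Hilbert Series mod reg seq}: the multiplicity is the total dimension $\sum_i d_{n,i}=\ell(\widehat{R(n)})$.

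Next I would invoke Proposition~\ref{Fibonacci}, which establishes exactly the recursion $\ell(\widehat{R(n)})=\ell(\widehat{R(n-1)})+\ell(\widehat{R(n-2)})$ for $n\geq 2$. Substituting the identity $e=\ell$ into each of the three terms yields $e(\widehat{R(n)})=e(\widehat{R(n-1)})+e(\widehat{R(n-2)})$, which is the claim. For completeness I would also note that this simultaneously recovers the analogous statement for the original rings: since $\overline{X_n}$ is a linear system of parameters and a regular sequence on the Cohen--Macaulay ring $R(n,t)$ by Corollary~\ref{xt}, the multiplicity is unchanged on passing to the Artinian reduction, so $e(R(n))=e(\widehat{R(n)})=\ell(\widehat{R(n)})=F(n+3)$ by Proposition~\ref{Fibonacci} together with Binet's formula.

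There is essentially no obstacle here: all the substantive work has already been done in Lemma~\ref{recursion}, Theorem~\ref{Hilbert Series mod reg seq}, and Proposition~\ref{Fibonacci}. The only point requiring any care is the bookkeeping convention that $e(M)=\ell(M)$ when $\dim M=0$, and citing the correct earlier results (Remark~\ref{art} for Artinian-ness, Proposition~\ref{Fibonacci} for the length recursion, and Corollary~\ref{xt} if one wants to transfer the conclusion to $e(R(n))$).
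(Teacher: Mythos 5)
Your proposal is correct and matches the paper's own (very brief) proof: both identify the multiplicity of the Artinian ring $\widehat{R(n)}$ with its length and then invoke the length recursion from Proposition~\ref{Fibonacci}. The extra remarks about transferring the statement to $e(R(n))$ via Corollary~\ref{xt} are accurate but belong to the subsequent corollary in the paper rather than to this one.
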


\begin{proof}
    We have established the length of the Artinian rings $\widehat{R(n)}$, and hence the multiplicity $e(\widehat{R(n)})$. 
\end{proof}

\pagebreak

\begin{cor}\label{multcor}
    For $n\geq 2$, there is an equality of multiplicities \[
    e(R(n))=e(R(n-1))+e(R(n-2)).\]  In particular, 
    \[e(R(n))=F\left(n+3\right)=\frac{(1+\sqrt{5})^{n+3}-(1-\sqrt{5})^{n+3}}{2^{n+3}\sqrt{5}}.
    \]
\end{cor}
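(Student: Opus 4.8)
The plan is to transfer the length computation of Proposition~\ref{Fibonacci} to the original rings through the linear regular sequence $\overline{X_n}$, using the fact that multiplicity is unchanged when one divides out a linear nonzerodivisor. Write $R(n)=R(n,t)$ and recall from Corollary~\ref{xt} that $\overline{X_n}=\overline{x_0},\,\overline{x_2-x_3},\,\ldots,\,\overline{x_{2n+2}-x_{2n+3}},\,\overline{x_{2n+4}}$ is a regular sequence on $R(n)$, and from Remark~\ref{art} that the quotient $\widehat{R(n)}\cong R(n)/(\overline{X_n})$ is Artinian (it has dimension $0$ by Proposition~\ref{sopedgering}, while $\dim R(n)=n+3$ by Theorem~\ref{dimension}). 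First I would record the standard fact that if $A$ is a standard graded $k$-algebra and $\ell\in A_1$ is a nonzerodivisor, then $e(A)=e(A/\ell A)$, where on the right $A/\ell A$ is viewed as a graded ring of dimension $\dim A-1$. Applying this one element at a time along the regular sequence $\overline{X_n}$ gives a chain of equalities ending at the zero-dimensional ring $\widehat{R(n)}$, and for an Artinian graded ring the Hilbert--Samuel multiplicity with respect to the homogeneous maximal ideal coincides with the length. Thus
\[
e(R(n))=e\bigl(\widehat{R(n)}\bigr)=\ell\bigl(\widehat{R(n)}\bigr).
\]

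The second step is immediate once this identification is in place: Proposition~\ref{Fibonacci} gives $\ell(\widehat{R(n)})=\ell(\widehat{R(n-1)})+\ell(\widehat{R(n-2)})$ for $n\geq 2$, so substituting $e(R(m))=\ell(\widehat{R(m)})$ for $m=n,n-1,n-2$ yields the asserted recursion $e(R(n))=e(R(n-1))+e(R(n-2))$. Likewise Proposition~\ref{Fibonacci} gives $\ell(\widehat{R(n)})=F(n+3)$, hence $e(R(n))=F(n+3)$, and the closed form
\[
e(R(n))=\frac{(1+\sqrt{5})^{n+3}-(1-\sqrt{5})^{n+3}}{2^{n+3}\sqrt{5}}
\]
is just Binet's formula for the Fibonacci sequence, exactly as stated in Proposition~\ref{Fibonacci}.

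The only point needing genuine care is the invariance $e(A)=e(A/\ell A)$ for a linear nonzerodivisor $\ell$; everything else is bookkeeping. I would justify it either by the behaviour of Hilbert series (multiplying by $(1-z)$ upon cutting by a linear nonzerodivisor shifts the pole order down by one while fixing the value of the numerator at $z=1$, which is the leading Hilbert coefficient up to the factor $(d-1)!$), or by citing the associativity/additivity formula for multiplicity together with the fact that a regular element lies in no minimal prime. This is entirely standard for Cohen--Macaulay graded algebras such as $R(n)$ (Corollary~\ref{CM}), so I do not anticipate any real obstacle here—the work of the corollary has already been done in Proposition~\ref{Fibonacci}.
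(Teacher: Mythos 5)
Your proposal is correct and follows essentially the same route as the paper: both reduce modulo the linear regular sequence $\overline{X_n}$, use the Hilbert series relation $\Hilb_{R(n)}(t)(1-t)^{d}=\Hilb_{\widehat{R(n)}}(t)$ to identify $e(R(n))$ with $\ell(\widehat{R(n)})$, and then invoke Proposition~\ref{Fibonacci}. The only cosmetic difference is that you phrase the reduction as an element-by-element invariance $e(A)=e(A/\ell A)$, whereas the paper states the Hilbert series identity for the whole sequence at once.
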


\begin{proof}
    To obtain the multiplicity of $R(n)$, we look at $\widehat{R(n)}=R(n)/(\overline{X_n})$, which by Remark~\ref{art} and Corollary~\ref{xt} is the Artinian quotient of $R(n)$ by a linear regular sequence.  By a standard result, we may calculate length along the obvious short exact sequences coming from multiplication by elements of our regular sequence to obtain the equality  
    \[
    \Hilb_{R(n)}(t)(1-t)^{d}=\Hilb_{\widehat{R(n)}}(t),
    \]
    where $d$ is the Krull dimension of $R(n)$. Defining multiplicity as in and preceding \cite[Thm 16.7]{Peeva2011}, it follows immediately that
    \[
    e(R(n))=\Hilb_{R(n)}(t)(1-t)^{d}\big\rvert_{t=1}=\Hilb_{\widehat{R(n)}}(1)=\ell(\widehat{R(n)}).
    \]
    We are done by Proposition~\ref{Fibonacci}.
\end{proof}

We reintroduce $t$ and spend the remainder of this section providing an alternate \\graph-theoretic proof for the regularity of $R(n,t)$.

\begin{proof}[Alternate proof of Corollary~\ref{regcor}]

We show that $\reg R(n,t)=\left\lfloor n/2 \right \rfloor+1$ by proving that $\reg I_{G_n^t}=\left\lfloor n/2 \right \rfloor+2$.  We first show that
\[
\reg I_{G_n^t}\leq \left\lfloor n/2 \right \rfloor+2.
\]
We recall by Proposition~\ref{chordal} that the graph $G_n^t$ is chordal bipartite with vertex bipartition $V_1\cup V_2$ of cardinalities 
\begin{eqnarray*}
    |V_1| &=& \left \lfloor \frac{n}{2} \right \rfloor +2\\
    |V_2| &=& \left \lceil \frac{n}{2} \right \rceil +2,
\end{eqnarray*}
and that $G_n^t$ does not have any vertices of degree one by Remark~\ref{graphtaurmk}.  Then by Theorem 4.9 of \cite{BIERMANN2017}, we have  
\begin{equation*}
    \reg I_{G_n^t}\leq \min\left \{\left \lfloor \frac{n}{2} \right \rfloor +2,\left \lceil \frac{n}{2} \right \rceil +2\right \}=\left \lfloor \frac{n}{2} \right \rfloor +2.
\end{equation*}

\noindent We note that we may equivalently prove $\reg R(n,t)\leq \left \lfloor \frac{n}{2} \right \rfloor +1$ by choosing the $\left \lfloor \frac{n}{2} \right \rfloor +2$ edges whose indices are equivalent to zero modulo $4$, one from each row of $L_n^t$, to obtain an edge matching (different from an induced matching, below) and then applying \cite[Th 1]{HH20}.

We now show that $\reg I_{G_n^t}\geq \left\lfloor n/2 \right \rfloor+2$. 
    Since $I_{G_n^t}$ is homogeneous and $\init I_{G_n^t}$ consists of squarefree monomials by Corollary~\ref{initial}, we have by Corollary 2.7 of \cite{conca2018squarefree} that $\reg \init I_{G_n^t} = \reg I_{G_n^t}$, so it suffices to prove that $\reg \init I_{G_n^t}\geq \left\lfloor n/2 \right \rfloor+2$.  The ideal $\init I_{G_n^t}$ can be viewed as the edge ideal of a simple graph, a ``comb” with $n+1$ tines, with consecutive odd variables corresponding to vertices along the spine, as pictured below: 
    
    \begin{center}
    \begin{tikzpicture}
	    \definecolor{ddpp}{rgb}{0,.4,.4}
        [scale=.8,every node/.style={circle,fill=none}]
	    \node (n2) at (4,10)  {$x_2$};
        \node (n3) at (4,8)  {$x_3$};
        \node (n5) at (6,8)  {$x_5$};
        \node (n4) at (6,10)  {$x_4$};
        \node (n7) at (8,8)  {$x_7$};
        \node (n6) at (8,10)  {$x_6$};
        \node (n9) at (10,8)  {$x_9$};
        \node (n8) at (10,10)  {$x_8$};
        \node (n11) at (12,8)  {$x_{11}$};
        \node (n10) at (12,10)  {$x_{10}$};
        \node (n13) at (14,8)  {$x_{2n+1}$};
        \node (n12) at (14,10)  {$x_{2n}$};
        \node (n15) at (16,8)  {$x_{2n+3}$};
        \node (n14) at (16,10)  {$x_{2n+2}$};
        
	    \path (n11) -- node[auto=false, fill=none]{\ldots} (n13);

	    \foreach \from/\to in {n2/n3,n4/n5,n6/n7,n8/n9,n10/n11,n12/n13,n14/n15,n3/n5,n5/n7,n7/n9,n9/n11,n13/n15}
        \draw (\from) -- (\to);

    \end{tikzpicture}
    \end{center}
    
    We know from Theorem 6.5 of \cite{Tai-Ha-Van-Tuyl-2008} that the regularity of an edge ideal is bounded below by the number of edges in any induced matching plus one, so we choose $\left \lfloor{n/2}\right \rfloor+1$ edges (tines) corresponding to certain odd variables that create an induced matching.  By beginning with the $x_3$-tine and choosing every other tine corresponding to the variables \begin{equation*}
        x_3,x_{3+4(1)},\ldots,x_{3+4(\left \lfloor{n/2}\right \rfloor)},
    \end{equation*}
    we obtain $\left \lfloor{n/2}\right \rfloor+1$  edges that are an induced matching, so we have 
    \[
    \reg\init I_{G_n^t}\geq \left \lfloor{n/2}\right \rfloor +2,
    \]
    as desired. 
    
    We conclude that $\reg I_{G_n^t}= \left\lfloor n/2 \right \rfloor+2$, and hence that $\reg R(n,t)=\left\lfloor n/2 \right \rfloor+1.$
\end{proof}

\newpage

\bibliographystyle{abbrv}
\bibliography{Paper1.bib}

\end{document}